\documentclass[preprint,12pt]{elsarticle}
\usepackage{amssymb}
\usepackage{amsfonts}
\usepackage{mathrsfs}
\usepackage{amsmath}
\usepackage{bm}
\usepackage{verbatim}
\usepackage{subfigure}
\usepackage{color}
\usepackage{tabularx}
\usepackage{booktabs} 
\usepackage{multirow} 
\usepackage[ruled,linesnumbered]{algorithm2e}
\usepackage{adjustbox} % 确保导入 adjustbox 包
\usepackage{graphicx} % 确保导入 graphicx 包
\usepackage{threeparttable} % 确保导入 threeparttable 包
\usepackage{amsthm}
\usepackage{longtable}
\usepackage{tikz}

\numberwithin{equation}{section}

\newtheorem{theorem}{Theorem}[section]

\newtheorem{lemma}[theorem]{Lemma}

\theoremstyle{definition}
\newtheorem{definition}[theorem]{Definition}

\newtheorem{remark}[theorem]{Remark}

\graphicspath{{figures/}}

\hfuzz=\maxdimen
\vfuzz=\maxdimen
\journal{}

\makeatletter
\def\ps@pprintTitle{%
	\let\@oddhead\@empty
	\let\@evenhead\@empty
	\let\@oddfoot\@empty
	\let\@evenfoot\@empty
}
\makeatother

\begin{document}

\begin{frontmatter}

%% Title, authors and addresses

%% use the tnoteref command within \title for footnotes;
%% use the tnotetext command for the associated footnote;
%% use the fnref command within \author or \address for footnotes;
%% use the fntext command for the associated footnote;
%% use the corref command within \author for corresponding author footnotes;
%% use the cortext command for the associated footnote;
%% use the ead command for the email address,
%% and the form \ead[url] for the home page:
%% \title{Title\tnoteref{label1}}
%% \tnotetext[label1]{}
%% \author{Name\corref{cor1}\fnref{label2}}
%% \ead{email address}
%% \ead[url]{home page}
%% \fntext[label2]{}
%% \cortext[cor1]{}
%% \affiliation{organization={},
%%             addressline={},
%%             city={},
%%             postcode={},
%%             state={},
%%             country={}}
%% \fntext[label3]{}

\title{A Variational Nonlocal Phase-Field Model for Dynamic Fracture in Elastic Solids}

\author[inst1]{Qing Cheng}
\ead{qingcheng@tongji.edu.cn}

\author[inst1]{Yuqi Sun}
\ead{yuqisun@tongji.edu.cn}

\author[inst1]{Xuejun Xu}
\ead{xuxj@tongji.edu.cn}

%\author[inst2]{ Haitao Yu}
%\ead{yuhaitao@tongji.edu.cn}

%\cortext[cor1]{Corresponding author}
%\cortext[cor2]{Co-corresponding author}

\affiliation[inst1]{organization={School of Mathematical Sciences, Key Laboratory of Intelligent Computing and Applications (Ministry of Education)}, 
addressline={Tongji University}, 
city={Shanghai},
postcode={200092}, 
country={China}}

\begin{comment}
\affiliation[inst2]{organization={State Key Laboratory of Disaster Reduction in Civil Engineering}, 
addressline={Tongji University}, 
city={Shanghai},
postcode={200092}, 
country={China}}
\end{comment}

\begin{abstract}
		We develop a variational nonlocal phase-field model for dynamic fracture in elastic solids. The proposed formulation is distinguished by three main features. First, the model is formulated through nonlocal kinematics and kernel-dependent function spaces, allowing weaker regularity requirements while recovering the classical local theory as the nonlocal interaction domain vanishes. Second, a nonlocal crack-surface functional is introduced as an integral counterpart of the Ambrosio--Tortorelli regularization, so that the characteristic length of the diffusive crack is implicitly determined by the nonlocal interaction domain rather than by a prescribed length scale. Third, the degraded nonlocal elastic energy and the nonlocal crack-surface functional are combined into a variationally consistent dynamic fracture system, consisting of a nonlocal momentum balance and an irreversible nonlocal gradient-flow evolution law for the phase field. The coupled system is solved using two temporal discretization strategies: a structure-preserving scalar auxiliary-variable scheme and a staggered alternating scheme, both combined with finite element discretization in space. Numerical examples involving Mode-I fracture, dynamic crack branching, Kalthoff--Winkler-type shear fracture, and fragmentation show that the proposed model captures complex crack initiation, propagation, branching, and interaction without explicit crack tracking. Quantitatively, the predicted crack-tip velocities remain below $0.6c_R$ in the dynamic branching and shear-loading tests, and the shear-loading benchmark gives an inclined crack path of approximately $48^\circ$, consistent with the characteristic Kalthoff--Winkler fracture pattern. 
\end{abstract}

\begin{comment}
%%Graphical abstract
\begin{graphicalabstract}
\includegraphics{grabs}
\end{graphicalabstract}
\end{comment}

%%Research highlights
\begin{comment}
\begin{highlights}
	\item A kernel-based variational nonlocal phase-field model is developed for dynamic fracture in elastic solids.
	\item A nonlocal crack-surface functional implicitly embeds the diffusive crack length scale and leads to an irreversible energy-dissipative formulation.
	\item Structure-preserving scalar auxiliary-variable and staggered alternating schemes are developed for complex dynamic fracture simulations.
\end{highlights}
\end{comment}

\begin{keyword}
%% keywords here, in the form: keyword \sep keyword
Nonlocal phase-field\sep
Dynamic fracture\sep
Structure-preserving SAV scheme\sep
Staggered alternating scheme\sep
%% PACS codes here, in the form: \PACS code \sep code
%% \PACS 0000 \sep 1111
%% MSC codes here, in the form: \MSC code \sep code
%% or \MSC[2008] code \sep code (2000 is the default)
%% \MSC 0000 \sep 1111
\end{keyword}

\end{frontmatter}

%% \linenumbers

%% main text

\section*{Nomenclature}

\begingroup
\begin{longtable}{@{}p{0.25\textwidth}p{0.68\textwidth}@{}}
	\toprule
	\textbf{Symbol} & \textbf{Description} \\
	\midrule
	\endfirsthead
	
	\toprule
	\textbf{Symbol} & \textbf{Description} \\
	\midrule
	\endhead
	
	\bottomrule
	\endfoot
	
	\multicolumn{2}{@{}l}{\textbf{Domains, boundaries, and geometric quantities}} \\
	\midrule
	$\Omega$ & Material domain in $\mathbb{R}^d$. \\
	$B_\delta(\mathbf{x})$ & Nonlocal interaction domain defined as the open ball centered at $\mathbf{x}$ with radius $\delta$. \\
	
	\midrule
	\multicolumn{2}{@{}l}{\textbf{Fields and kinematic quantities}} \\
	\midrule
	$\mathbf{u}$ & Displacement field. \\
	$\psi$ & Phase-field variable, with $\psi=0$ for intact material and $\psi=1$ for the cracked region. \\
	$\boldsymbol{\varepsilon}_\delta$ & Nonlocal infinitesimal strain tensor. \\
	$\boldsymbol{\sigma}_\delta$ & Nonlocal Cauchy stress tensor. \\
	
	\midrule
	\multicolumn{2}{@{}l}{\textbf{Nonlocal kernels and operators}} \\
	\midrule
	$\mathcal{G}_\delta(\mathbf{u})$ & Nonlocal displacement gradient. \\
	$\mathcal{D}_\delta(\boldsymbol{\sigma}_\delta)$ & Nonlocal divergence operator acting on the stress tensor. \\
	$\mathcal{N}_\delta(\boldsymbol{\sigma}_\delta)$ & Nonlocal boundary traction operator. \\
	$\mathcal{L}_\delta\psi$ & Nonlocal diffusion-type operator for the phase-field variable. \\
	
	\midrule
	\multicolumn{2}{@{}l}{\textbf{Crack-surface functional and phase-field evolution}} \\
	\midrule
	$\gamma_\delta(\psi,\mathcal{L}_\delta\psi)$ & Nonlocal crack-surface density functional. \\
	$\mathcal{A}_\delta(\psi,\mathcal{L}_\delta\psi)$ & Nonlocal crack-surface functional. \\
	$\mathcal{Y}(\psi,D)$ & Nonlocal crack-surface driving force. \\
	
	\midrule
	\multicolumn{2}{@{}l}{\textbf{Function spaces}} \\
	\midrule
	$\mathcal{V}_\omega(\Omega)$ & Kernel-dependent nonlocal function space for displacement fields. \\
	$\mathcal{M}_\omega(\Omega)$ & Kernel-dependent nonlocal function space for the phase-field variable. \\
	
\end{longtable}
\endgroup

\section{Introduction}
\label{sec:introduction}
Fracture of materials is a longstanding and fundamental problem in solid mechanics and continues to pose significant challenges for computational modeling. A central difficulty arises from the inherently multiscale and discontinuous nature of crack evolution, which involves damage nucleation, crack initiation, propagation, coalescence, branching, and strong topology changes. 

	Existing fracture models may be broadly viewed from two complementary paradigms. The first paradigm represents cracks at the discrete or algorithmic level. Cohesive element methods \cite{wells2001new} introduce traction--separation laws along potential crack surfaces, but they usually require predefined or adaptively inserted cohesive interfaces and may introduce artificial compliance and mesh dependence \cite{foulk2010examination,nguyen2014open}. Enrichment-based methods, such as XFEM \cite{moes1999finite,sukumar2000extended,daux2000arbitrary} and meshfree approaches \cite{rabczuk2004cracking,rabczuk2007three}, represent discontinuities by enriching the approximation space or relaxing mesh conformity. Although these methods are powerful for prescribed or evolving crack paths, robustly handling crack branching, merging, and complex three-dimensional topological changes remains algorithmically demanding \cite{wu2019computational}.

	The second paradigm describes fracture at the continuous field-equation level. Local damage models \cite{lemaitre2012course} introduce internal variables to describe material degradation, but they usually lack intrinsic length scales and may lead to mesh-dependent strain localization \cite{geers1998strain,peerlings2002localisation}. Nonlocal damage models and Peridynamic-type formulations regularize the problem by replacing local differential operators with spatial integral operators, thereby allowing finite-range interactions and discontinuous displacement fields \cite{peerlings2002localisation,bavzant2002nonlocal,silling2000reformulation,silling2007peridynamic}. However, many such nonlocal fracture formulations are not derived from a variational energy principle, and their associated crack-surface energy structure, variational consistency, and connection with the local sharp-crack limit remain insufficiently clarified.

	Phase-field fracture models \cite{bourdin2000numerical,bourdin2008variational,miehe2010phase,borden2012phase} also belong to the continuous-level paradigm. They represent cracks as diffusive damage zones governed by a variational energy principle and naturally avoid explicit crack tracking \cite{Wu2020}. Nevertheless, phase-field fracture models require a prescribed length scale to control the width of the diffusive crack. The relationship between this macroscopic length parameter and microscopic fracture processes, such as microcrack nucleation, void growth, and bond breakage, remains largely unresolved \cite{budarapu2019multiscale}.

	These observations motivate the development of a variational nonlocal phase-field formulation at the continuous field-equation level. The aim is to combine the finite-range interaction structure of nonlocal continuum models with the energetic crack-surface regularization of phase-field fracture, while avoiding explicit crack tracking at the discrete level. In this work, we develop such a model for dynamic fracture in elastic solids. The main contributions are summarized as follows:

\begin{itemize}
	
	\item
		We formulate a variational nonlocal phase-field fracture model based on nonlocal kinematics and kernel-dependent function spaces for the displacement and phase-field variables. This setting allows weaker regularity requirements than classical local continuum models while retaining consistency with the local theory in the vanishing nonlocal interaction domain limit.
	
	\item
		We introduce a nonlocal crack-surface functional as an integral counterpart of the Ambrosio--Tortorelli regularization. In this formulation, the characteristic length of the diffusive crack is implicitly determined by the nonlocal interaction domain rather than prescribed as an independent internal length scale. Combining this functional with the degraded nonlocal elastic energy yields a variationally consistent system consisting of a nonlocal momentum balance and an irreversible nonlocal gradient-flow evolution law for the phase field.
	
	\item We develop two numerical strategies for the coupled system: a structure-preserving
	scalar auxiliary variable (SAV) scheme and a staggered alternating scheme combining Newmark-type time integration and implicit phase-field evolution. Both schemes are implemented with finite element spatial discretization. Numerical examples, including Mode-I fracture, dynamic crack branching, Kalthoff--Winkler-type shear fracture, and fragmentation, demonstrate the capability of the proposed model to capture complex dynamic crack patterns without explicit crack tracking.
	
\end{itemize}

	The remainder of the paper is organized as follows. Section~\ref{sec:phase_field_model} presents the proposed nonlocal phase-field formulation, including the nonlocal kinematics, the nonlocal crack-surface functional, and the resulting variational governing equations. Section~\ref{sec:numerical_scheme} develops the SAV scheme and staggered alternating schemes and the corresponding finite element discretization. Section~\ref{sec:numerical_eg} provides dynamic fracture simulations to assess the performance of the proposed model. Finally, concluding remarks are provided at the end of the paper.

\section{Nonlocal phase-field formulation for crack propagation of solids}
\label{sec:phase_field_model}
\noindent This section presents the theoretical framework of the proposed nonlocal phase-field formulation for crack propagation in elastic solids. Section~\ref{subsec:nonlocal_kinematics} introduces key phenomenological concepts of the proposed nonlocal continuum mechanics. By applying the variational principle, the governing equation for displacement field is derived. In Section~\ref{subsec:nonlocal_geometric_functional}, a nonlocal geometric functional for diffusive cracks is proposed, where the implicit crack length scale is determined by the nonlocal characteristic length. Building on these foundations, a phase-field system of equations for diffusive crack propagation is developed in Section~\ref{subsec:nonlocal_gover_eq}. For brevity, we may omit the explicit dependence on the time variable $t$ whenever it is clear from the context.

\subsection{Governing equations of nonlocal continua}
\label{subsec:nonlocal_kinematics}
\noindent In this framework of nonlocal continuum mechanics, the motion of a continuum is
influenced not only by local fields but also by the neighborhood of each material
point. In this paper, the continuum $\Omega\subset\mathbb{R}^d$ ($d=2,3$) is considered as a bounded Lipschitz domain, whose boundary $\partial\Omega$ is a $(d-1)$-dimensional Lipschitz manifold. Its associated \emph{nonlocal boundary} is defined by
\begin{equation}
	\Gamma_\delta = \big\{ \mathbf{x} \in \Omega\,\big|\, d(\mathbf{x}, \partial\Omega) \leq \delta \big\},
\end{equation}
where $d(\mathbf{x}, \partial\Omega) = \inf\{  |\mathbf{x} - \mathbf{y}|\, |\, {\mathbf{y} \in \partial\Omega} \}$ denotes the Euclidean distance from $\mathbf{x}$ to $\partial\Omega$.

Figure~\ref{fig:ref-cur-config} schematically illustrates the motion of the continuum $\Omega$ from its reference configuration $\Omega_0$ to the current configuration $\Omega_t$ over the time interval $[0,T]$. Following the idea of nonlocal operators\cite{gunzburger2010nonlocal,du2013nonlocal}, we define the \emph{nonlocal deformation
	gradient tensor} $\mathcal{F}_\delta$ by
\begin{equation}
	\label{eq:F}
	\mathcal{F}_\delta(\bm{\phi})(\mathbf{x},t)=
	\int_{B_\delta(\mathbf{x})}
	\big(\boldsymbol{\phi}(\mathbf{y},t)-\boldsymbol{\phi}(\mathbf{x},t)\big)
	\otimes \boldsymbol{\omega}(\mathbf{x},\mathbf{y}) \, \mathrm{d}v_\mathbf{y},
	\quad \forall(\mathbf{x},t)\in \Omega_0\times[0,T],
\end{equation}
where $\boldsymbol{\phi}:\Omega_0\times[0,T]\to\mathbb{R}^d$ is the motion mapping, and
$B_\delta(\mathbf{x})$ denotes the open ball centered at $\mathbf{x}$ with radius $\delta$, which characterizes the nonlocal interaction length scale. $\boldsymbol{\omega}(\mathbf{x},\mathbf{y})=(\mathbf{y}-\mathbf{x})\omega_\delta(|\mathbf{y}-\mathbf{x}|)$ is an antisymmetric kernel function, which should satisfy the following assumptions:
\begin{equation}
	\label{assump:vector-kernel}
	\begin{aligned}
		\text{[\textbf{A1}]}\;&\omega_\delta:\mathbb{R}^d\rightarrow\mathbb{R}_{+} \,\text{with}\, \operatorname{supp}\,\omega_\delta =\overline{B_\delta(\mathbf{0})},\\
		[\textbf{A2}]\;& \int_{B_\delta(\mathbf{0})} \mathbf{x} \otimes\bm{\omega}(\mathbf{0},\mathbf{x})\, \mathrm{d}v_\mathbf{x}=\mathbf{I}_d,\\
	\end{aligned}
\end{equation}
where $\mathbf{I}_d\in\mathbb{R}^{d\times d}$ is the $d\times d$ identity matrix.

\begin{remark}
There exist many admissible choices for the scalar kernel function 
$\omega_\delta(|\mathbf{x}|)$ satisfying assumptions~[\textbf{A1}] and~[\textbf{A2}]. Two widely used classes of kernels in nonlocal models—such as nonlocal diffusion 
and Peridynamics~\cite{silling2000reformulation,silling2007peridynamic}—are the family 
of the Gaussian kernels and singular kernels with algebraic decay of order $d+2\beta$ ($\beta\in(0,1)$), and both types of kernels are supported in 
$B_\delta(\mathbf{0})$. Here, $\beta$ controls the strength of the near-field singularity of the algebraically decaying kernel. One representative Gaussian-type example is given by
\begin{equation}\label{eq:kernel}
	\begin{aligned}
		\omega_{\delta}(|\mathbf{x}|)
		&=c_\alpha\exp\left(-\frac{\alpha}{\delta^d}|\mathbf{x}|^d\right)\chi_{[0,\delta)}(|\mathbf{x}|),
	\end{aligned}
\end{equation}
where $\alpha\in(0,\infty)$ is the shape parameter controlling the decay rate of the Gaussian-type kernel, and $c_\alpha=\frac{d}{|\partial B_1(\mathbf{0})|\int_{0}^{\delta}\exp\left(-\frac{\alpha}{\delta^d}r^d\right)r^{d+1}\, \mathrm{d}r}$ is a normalization constant chosen such that $\omega_\delta$ satisfies assumption~[\textbf{A2}].
\end{remark}

\begin{figure}[htbp]
	\centering
	\begin{tikzpicture}
		
		\node[anchor=south west, inner sep=0] (img) 
		at (0,0) {\includegraphics[width=0.9\textwidth]{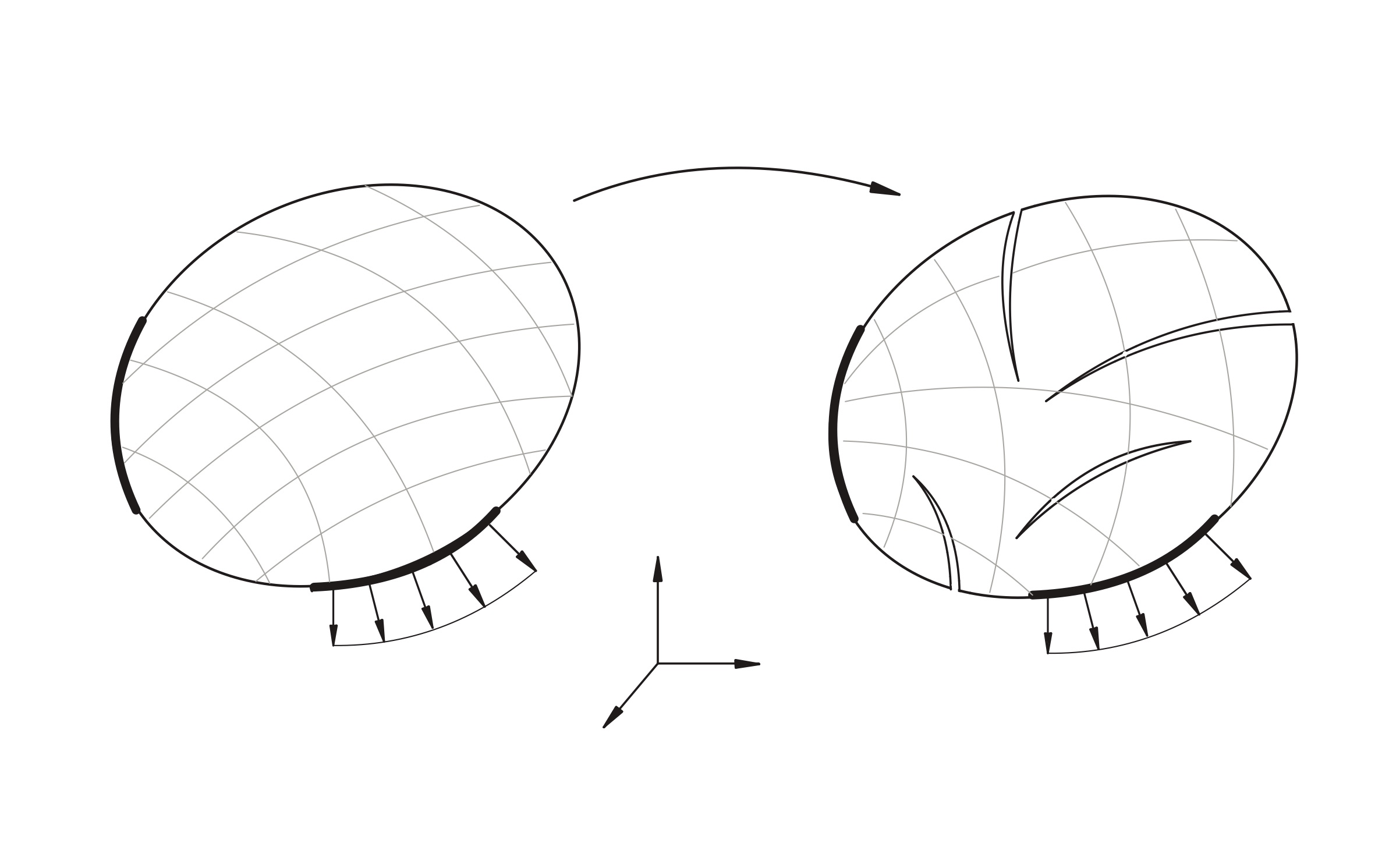}};
		\begin{scope}[x={(img.south east)}, y={(img.north west)}]
			\node at (0.35,0.7) {$\Omega_0$};
			\node at (0.86,0.7) {$\Omega_t$};
			\node at (0.52,0.85) {$\bm{\phi}(\cdot,t)$};
			\node at (0.35,0.24) {$\Gamma_{\delta,t}$};
			\node at (0.9,0.24) {$\bm{\phi}(\Gamma_{\delta,t},t)$};
			\node at (0.12,0.5) {$\Gamma_{\delta,u}$};
			\node at (0.67,0.5) {$\bm{\phi}(\Gamma_{\delta,u},t)$};
			% 坐标轴
			\node at (0.5,0.35) {$x_3$};
			\node at (0.55,0.2) {$x_1$};
			\node at (0.45,0.13) {$x_2$};
		\end{scope}
	\end{tikzpicture}
	\caption{Schematic of a nonlocal continuum illustrating the motion mapping from the referential configuration $\Omega_0$ to the current configuration $\Omega_t$ under discontinuous deformation.}
	\label{fig:ref-cur-config}
\end{figure}

The tensor $\mathcal F_\delta$ provides an averaged nonlocal measure of
deformation and can be regarded as a nonlocal counterpart of the classical
deformation gradient $\nabla_{\mathbf x}\boldsymbol{\phi}$. Under sufficient
smoothness assumptions on the motion mapping $\boldsymbol{\phi}$, the
normalization condition in~\eqref{assump:vector-kernel} implies the local
consistency
\begin{equation}\label{eq:local_limit}
	\lim_{\delta\to0}\mathcal F_\delta(\boldsymbol{\phi})(\mathbf{x},t)
	=
	\nabla_{\mathbf x}\boldsymbol{\phi}(\mathbf{x},t).
\end{equation}

In this work, we focus on small-strain dynamic fracture. Therefore, the
distinction between the reference and current configurations is neglected, and
the kinematics is formulated directly in terms of the displacement field
$\mathbf u(\mathbf x,t)$. For $\boldsymbol{\phi}(\mathbf x,t)
=\mathbf x+\mathbf u(\mathbf x,t)$, the displacement contribution to
$\mathcal F_\delta$ motivates the following \emph{nonlocal displacement
	gradient}:
\begin{equation}
	\label{eq:Gu}
	\mathcal{G}_\delta(\mathbf{u})(\mathbf{x},t)
	=
	\int_{B_\delta(\mathbf{x})}
	\big(
	\mathbf{u}(\mathbf{y},t)
	-
	\mathbf{u}(\mathbf{x},t)
	\big)
	\otimes
	\boldsymbol{\omega}(\mathbf{x},\mathbf{y})\,
	\mathrm{d}v_\mathbf{y},
	\quad
	(\mathbf{x},t)\in\Omega\times(0,T).
\end{equation}
For sufficiently smooth displacement fields, one has
$\mathcal G_\delta(\mathbf u)\to\nabla\mathbf u$ as $\delta\to0$.

The \emph{nonlocal infinitesimal strain tensor} is then defined as the
symmetric part of the nonlocal displacement gradient:
\begin{equation}\label{eq:strain}
	\boldsymbol{\varepsilon}_\delta(\mathbf{u})
	=
	\frac{1}{2}
	\left(
	\mathcal{G}_\delta(\mathbf{u})
	+
	\mathcal{G}_\delta(\mathbf{u})^\top
	\right).
\end{equation}
Accordingly, $\boldsymbol{\varepsilon}_\delta(\mathbf u)$ recovers the
classical infinitesimal strain tensor in the local limit. For a homogeneous, isotropic, linearly elastic material, the linearized
\emph{nonlocal elastic strain energy density} is defined by
\begin{equation}
	\mathcal{W}_{\delta,e}(\mathbf{u})
	=
	\frac{\lambda}{2}
	\big(\operatorname{tr}\boldsymbol{\varepsilon}_\delta(\mathbf{u})\big)^2
	+
	\mu\,
	\boldsymbol{\varepsilon}_\delta(\mathbf{u})
	:
	\boldsymbol{\varepsilon}_\delta(\mathbf{u}),
\end{equation}
where $\lambda$ and $\mu$ are the Lamé constants, and $:$ denotes the double
contraction of second-order tensors. The corresponding \emph{nonlocal Cauchy
	stress} is given by
\begin{equation}\label{eq:stress}
	\bm{\sigma}_{\delta}(\mathbf{u})
	=
	\frac{\partial\mathcal{W}_{\delta,e}(\mathbf{u})}
	{\partial\boldsymbol{\varepsilon}_\delta}
	=
	\lambda
	\operatorname{tr}\boldsymbol{\varepsilon}_\delta(\mathbf{u})\,\mathbf I_d
	+
	2\mu\,\boldsymbol{\varepsilon}_\delta(\mathbf{u}).
\end{equation}

Before formulating the governing equations of nonlocal continuum mechanics, we introduce the functional space in which the displacement field $\mathbf{u}(\mathbf{x},t)$ is defined.

\begin{definition}
	Let $\Omega\subset \mathbb{R}^d$ be a bounded Lipschitz domain and let 
	$\omega_\delta: \mathbb{R}^d \to [0,\infty)$ be a kernel function satisfies assumptions [\textbf{A1}] and [\textbf{A2}] in Eq.\eqref{assump:vector-kernel}.
	Define the nonlocal seminorm
	\begin{equation}
		[\mathbf{u}]_{\mathcal{V}_\omega(\Omega)}^2
		=\int_{\Omega} \int_{\Omega} 
		|\mathbf{u}(\mathbf{y}) - \mathbf{u}(\mathbf{x})|^2 \, 
		\omega_\delta(|\mathbf{y}-\mathbf{x}|)\, \mathrm{d}v_\mathbf{y}\mathrm{d}v_\mathbf{x}, 
	\end{equation}
then the nonlocal function space is given by
	\begin{equation}
		\mathcal{V}_\omega(\Omega)
		= \big\{\, \mathbf{u}\in L^2(\Omega; \mathbb{R}^d) \; \big|\; [\mathbf{u}]_{\mathcal{V}_\omega(\Omega)} < \infty \, \big\},
	\end{equation}
	equipped with the norm $		\|\mathbf{u}\|_{\mathcal{V}_\omega(\Omega)}^2
	= \|\mathbf{u}\|_{L^2(\Omega)}^2 + [\mathbf{u}]_{\mathcal{V}_\omega(\Omega)}^2$.
\end{definition}

\begin{lemma}\label{lem:nonlocal-gradient-estimate}
	Let $\Omega\subset\mathbb{R}^d$ be a bounded Lipschitz domain and 
	$\omega_\delta$ satisfy assumptions {[\textbf{A1}]}–{[\textbf{A2}]} in~\eqref{assump:vector-kernel}.
	For any $\mathbf{u}\in\mathcal{V}_\omega(\Omega)$, the nonlocal gradient $\mathcal{G}_\delta(\mathbf{u})(\cdot,t)$ $ (t\in[0,T])$ in Eq.\eqref{eq:Gu} is well-defined as an element of $L^2(\Omega;\mathbb{R}^{d\times d})$ and there exists a constant 
	$C_\omega=\int_{B_{\delta}(\mathbf{0})}|\bm{\xi}|^2\omega_\delta(|\bm{\xi}|)
	\,\mathrm{d}v_{\bm{\xi}}$, depending only on the kernel function, such that
	\begin{equation}\label{eq:nonlocal-G-upper}
		\|\mathcal{G}_\delta(\mathbf{u})\|_{L^2(\Omega)}^2
		\le C_\omega\,[\mathbf{u}]_{\mathcal{V}_\omega(\Omega)}^2,
		\quad \forall\,\mathbf{u}\in\mathcal{V}_\omega(\Omega).
	\end{equation}
\end{lemma}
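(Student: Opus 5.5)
The plan is a pointwise Cauchy--Schwarz estimate with the weight split as $\omega_\delta = \omega_\delta^{1/2}\cdot\omega_\delta^{1/2}$, followed by integration over $\Omega$ and Fubini/Tonelli. One caveat must be settled first. The integral in Eq.~\eqref{eq:Gu} runs over $B_\delta(\mathbf{x})$, which may leave $\Omega$ when $\mathbf{x}\in\Gamma_\delta$. The seminorm $[\mathbf{u}]_{\mathcal{V}_\omega(\Omega)}$ only controls pairs in $\Omega\times\Omega$. I would therefore read $\mathcal{G}_\delta(\mathbf{u})$ with the integration restricted to $B_\delta(\mathbf{x})\cap\Omega$; equivalently, $\mathbf{u}$ is extended by zero and the difference is taken only over $\mathbf{y}\in\Omega$. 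This is the convention under which the stated inequality can hold. Everything below is for fixed $t$, which I suppress.

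\textbf{Step 1 (pointwise bound).} For a.e.\ $\mathbf{x}\in\Omega$, use $|\mathbf{a}\otimes\mathbf{b}|=|\mathbf{a}||\mathbf{b}|$ (Frobenius norm) and $\boldsymbol{\omega}(\mathbf{x},\mathbf{y})=(\mathbf{y}-\mathbf{x})\omega_\delta(|\mathbf{y}-\mathbf{x}|)$ to get
\begin{equation*}
|\mathcal{G}_\delta(\mathbf{u})(\mathbf{x})|\le\int_{B_\delta(\mathbf{x})\cap\Omega}|\mathbf{u}(\mathbf{y})-\mathbf{u}(\mathbf{x})|\,\omega_\delta^{1/2}\;\cdot\;|\mathbf{y}-\mathbf{x}|\,\omega_\delta^{1/2}\,\mathrm{d}v_\mathbf{y}.
\end{equation*}
Cauchy--Schwarz then gives
\begin{equation*}
|\mathcal{G}_\delta(\mathbf{u})(\mathbf{x})|^2\le\Big(\int_{\Omega}|\mathbf{u}(\mathbf{y})-\mathbf{u}(\mathbf{x})|^2\omega_\delta(|\mathbf{y}-\mathbf{x}|)\,\mathrm{d}v_\mathbf{y}\Big)\Big(\int_{B_\delta(\mathbf{x})}|\mathbf{y}-\mathbf{x}|^2\omega_\delta(|\mathbf{y}-\mathbf{x}|)\,\mathrm{d}v_\mathbf{y}\Big).
\end{equation*}
Here [\textbf{A1}] is used to insert the support of $\omega_\delta$ and to enlarge the second domain to the full ball. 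The substitution $\bm{\xi}=\mathbf{y}-\mathbf{x}$ shows that the second factor equals $C_\omega$, independently of $\mathbf{x}$.

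\textbf{Step 2 (integration).} Integrate the pointwise bound over $\mathbf{x}\in\Omega$. Tonelli's theorem (the integrand is nonnegative and measurable) turns the right side into exactly $C_\omega[\mathbf{u}]^2_{\mathcal{V}_\omega(\Omega)}$, which proves \eqref{eq:nonlocal-G-upper}.

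\textbf{Step 3 (well-definedness).} Since $[\mathbf{u}]_{\mathcal{V}_\omega(\Omega)}<\infty$, the inner integral $\int_\Omega|\mathbf{u}(\mathbf{y})-\mathbf{u}(\mathbf{x})|^2\omega_\delta\,\mathrm{d}v_\mathbf{y}$ is finite for a.e.\ $\mathbf{x}$. Provided $C_\omega<\infty$, the same Cauchy--Schwarz bound shows the defining integral converges absolutely for a.e.\ $\mathbf{x}$. Measurability of $\mathbf{x}\mapsto\mathcal{G}_\delta(\mathbf{u})(\mathbf{x})$ follows from Fubini applied to the jointly measurable integrand, and Step 2 places it in $L^2(\Omega;\mathbb{R}^{d\times d})$.

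\textbf{Main obstacle.} The core estimate is routine; the delicate part is confirming $C_\omega<\infty$. For the Gaussian kernel \eqref{eq:kernel} this is immediate. For singular kernels of order $d+2\beta$, $\int_0^\delta r^{2}r^{-d-2\beta}r^{d-1}\,\mathrm{d}r$ converges because $\beta<1$. I would also note that [\textbf{A2}] gives $\int\xi_i\xi_j\omega_\delta\,\mathrm{d}v_{\bm{\xi}}=\delta_{ij}$, and taking the trace yields $C_\omega=d$. So the constant is in fact universal under [\textbf{A2}], and its finiteness is built into the normalization assumption.
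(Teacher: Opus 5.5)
Your proposal is correct and follows essentially the same route as the paper: a pointwise Cauchy--Schwarz estimate with the weight split against $\omega_\delta^{1/2}$, the kernel factor bounded by $C_\omega$, and integration over $\Omega$. The paper's appendix also tacitly restricts the $\mathbf{y}$-integral to $\Omega$, as your convention does, and your additional remarks (a.e.\ absolute convergence and measurability, and the observation that taking the trace in [\textbf{A2}] gives $C_\omega=d$) are correct refinements that the paper does not spell out.
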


\begin{remark}
Lemma~\ref{lem:nonlocal-gradient-estimate} shows that the nonlocal displacement gradient $\mathcal{G}_\delta(\mathbf{u})$ is controlled by the nonlocal seminorm of the displacement field. This estimate ensures that the nonlocal strain and the corresponding elastic energy are well defined for displacement fields in $\mathcal{V}_\omega(\Omega)$, even when classical spatial derivatives may not exist. The proof is given in~\ref{app:proof-nonlocal-gradient-estimate}. 
\end{remark}

We now derive the nonlocal equation of motion from Hamilton's principle. Let $\Omega\subset\mathbb{R}^d$ be a bounded Lipschitz domain with the nonlocal boundary decomposition $\Gamma_{\delta} = \Gamma_{\delta,t} \cup \Gamma_{\delta,u}$, where $\Gamma_{\delta,t}\neq\emptyset$, $\Gamma_{\delta,u}\neq\emptyset$ and $ \text{int}(\Gamma_{\delta,t})\cap \text{int}(\Gamma_{\delta,u}) = \emptyset$. Assume that the displacement field $\mathbf{u}(\mathbf{x},t)$ satisfies
\begin{equation}
   \mathbf{u} \in L^2(0,T;\mathcal{V}_{\omega,\mathbf u_g}(\Omega)), 
	\quad 
	\dot{\mathbf{u}} \in H^1(0,T;L^2(\Omega;\mathbb{R}^d)),
\end{equation}
	where $\dot{\mathbf{u}}=\partial_t\mathbf{u}$ and $\mathcal{V}_{\omega,\mathbf u_g}(\Omega)$ is defined by 
	\begin{equation}
\mathcal{V}_{\omega,\mathbf u_g}(\Omega)=\{\mathbf{u}(\cdot,t)\in\mathcal{V}_{\omega}(\Omega):\mathbf{u}(\cdot,t) = \mathbf{u}_g(t)\, \text{on}\,  \Gamma_{\delta,u}\}.
	\end{equation}
	
The nonlocal Lagrangian functional is defined by
	\begin{equation}
		\mathcal{L}_\delta(\mathbf{u},\dot{\mathbf{u}})
		= \int_{\Omega}
		\left(
		\frac{1}{2}\rho\,|\dot{\mathbf{u}}|^2
		- \mathcal{W}_{\delta,e}(\mathbf{u})
		+ \mathbf{b}\cdot\mathbf{u}
		\right)\mathrm{d}v
		- \int_{\Gamma_{\delta,t}}\mathbf{t}\cdot\mathbf{u}\,\mathrm{d}v,
	\end{equation}
	where $\rho\in L^\infty(\Omega;\mathbb{R}_{+})$ denotes the mass density,
	$\mathbf{b}\in L^2(\Omega;\mathbb{R}^{d})$ the body force,
	and $\mathbf{t}\in L^2(\Gamma_{\delta,t};\mathbb{R}^{d})$ the prescribed nonlocal traction. Further, the corresponding action functional is given by
	\begin{equation}
		\mathcal{S}[\mathbf{u}]
		= \int_{0}^{T}\mathcal{L}_\delta(\mathbf{u},\dot{\mathbf{u}})\,\mathrm{d}t.
	\end{equation}
Then the principle of stationary action, $\delta\mathcal{S}[\mathbf{u}]= 0$, leads to the following Euler–Lagrange equation governing the motion of the system as
	\begin{equation}\label{eq:motion}
		\rho\,\partial_{t}^2\mathbf{u}
		= \mathcal{D}_{\delta}(\bm{\sigma}_\delta)
		+ \mathbf{b},
		\quad
		\forall(\mathbf{x},t)\in(\Omega\setminus\Gamma_{\delta})\times[0,T],
	\end{equation}
	with the nonlocal traction boundary condition
	\begin{equation}\label{eq:bc}
		\mathcal{N}_{\delta}(\bm{\sigma}_\delta)
		= \mathbf{t},
		\quad
		\forall(\mathbf{x},t)\in\Gamma_{\delta,t}\times[0,T].
	\end{equation}
	Here $\mathcal{D}_{\delta}$ and $\mathcal{N}_{\delta}$ denote the nonlocal divergence and nonlocal boundary operators, respectively, defined by
	\begin{equation}
		\begin{aligned}
			\mathcal{D}_{\delta}(\bm{\sigma}_\delta)(\mathbf{x},t)
			&= \int_{\Omega}
			\big(\bm{\sigma}_\delta(\mathbf{x},t)
			+ \bm{\sigma}_\delta(\mathbf{y},t)\big)
			\cdot\bm{\omega}(\mathbf{x},\mathbf{y})\,
			\mathrm{d}v_\mathbf{y},\ \forall(\mathbf{x},t)\in(\Omega\setminus\Gamma_{\delta})\times[0,T],\\
			\mathcal{N}_{\delta}(\bm{\sigma}_\delta)(\mathbf{x},t)
			&= -\int_{\Omega}
			\big(\bm{\sigma}_\delta(\mathbf{x},t)
			+ \bm{\sigma}_\delta(\mathbf{y},t)\big)
			\cdot\bm{\omega}(\mathbf{x},\mathbf{y})\,
			\mathrm{d}v_\mathbf{y},\ \forall(\mathbf{x},t)\in\Gamma_{\delta}\times[0,T],
		\end{aligned}
	\end{equation}
where $\bm{\sigma}_\delta(\mathbf{x},t)$ denotes $\bm{\sigma}_\delta(\mathbf{u})(\mathbf{x},t)$.

\begin{remark}
Eqs.~\eqref{eq:strain}, \eqref{eq:stress}, \eqref{eq:motion} and \eqref{eq:bc},
together with the initial conditions 
\begin{equation}
	\mathbf{u}(\mathbf{x},0)=\mathbf{u}_0(\mathbf{x}),\, 
	\dot{\mathbf{u}}(\mathbf{x},0)=\mathbf{v}_0(\mathbf{x}),\, \mathbf{x}\in\Omega,
\end{equation}
constitute the complete initial--nonlocal boundary value problem governing 
the nonlocal dynamic behavior of the isotropic elastic solids.
\end{remark}

Now, we demonstrate that the nonlocal continuum model proposed in this section converges to the classical isotropic homogeneous elastic mechanics as the nonlocal parameter $\delta$ approaches zero.

\subsection{Nonlocal geometric functional for diffusive cracks}\label{subsec:nonlocal_geometric_functional}
\noindent In this paper, the sharp crack is approximated by a \emph{diffuse} crack representation within a nonlocal phase-field framework. We introduce an order parameter $\psi(\cdot,t)\in L^2(\Omega; [0,1])$ $(t\in[0,T])$ to describe the diffusive crack topology, whose values characterize the degree of damage in the solid: $\psi=0$ corresponds to intact material, whereas $\psi=1$ denotes the crack region (see Figure~\ref{fig:diffusive_crack}). Define the \emph{nonlocal crack surface density functional} as
\begin{equation}
	\gamma_\delta(\psi, \mathcal{L}_\delta\psi)
	=\frac{1}{2\delta}\psi^2+\frac{\delta}{2}\psi\,\mathcal{L}_\delta\psi,
\end{equation}
where the nonlocal operator $\mathcal{L}_\delta\psi$ is given by
\begin{equation}
	\mathcal{L}_\delta\psi(\mathbf{x},t)
	=\int_{B_\delta(\mathbf{x})}\omega_\delta(|\mathbf{y}-\mathbf{x}|)
	\big(\psi(\mathbf{x},t)-\psi(\mathbf{y},t)\big)\,\mathrm{d}v_\mathbf{y},
	\quad \forall(\mathbf{x},t)\in\Omega\times[0,T].
\end{equation}
Accordingly, the total diffusive crack surface is characterized by the 
\emph{nonlocal crack surface functional} (see Figure~\ref{fig:diffusive_crack}):
\begin{equation}
	\begin{aligned}
		\mathcal{A}_\delta(\psi,\mathcal{L}_\delta\psi)
		&=\int_{\Omega}\gamma_\delta(\psi,\mathcal{L}_\delta\psi)\,\mathrm{d}v \\
		&=\int_{\Omega}\frac{1}{2\delta}\psi^2\,\mathrm{d}v
		+\frac{\delta}{4}\int_{\Omega}\int_{\Omega}\omega_\delta(|\mathbf{y}-\mathbf{x}|)
		\big(\psi(\mathbf{x},t)-\psi(\mathbf{y},t)\big)^2
		\,\mathrm{d}v_\mathbf{x}\mathrm{d}v_\mathbf{y}.
        \label{eq:nonlocal_crack_functional}
	\end{aligned}
\end{equation}

\begin{remark}
	The second term in Eq.~\eqref{eq:nonlocal_crack_functional},
	\begin{equation}
		\frac{\delta}{4}\int_{\Omega}\int_{\Omega}
		\omega_\delta(|\mathbf{y}-\mathbf{x}|)
		\big(\psi(\mathbf{x},t)-\psi(\mathbf{y},t)\big)^2
		\,\mathrm{d}v_\mathbf{x}\mathrm{d}v_\mathbf{y},
	\end{equation}
	can be interpreted as a nonlocal counterpart of the gradient regularization term
	in the classical Ambrosio--Tortorelli functional. Indeed, for a sufficiently smooth
	phase-field variable $\psi$, the difference
	$\psi(\mathbf{y},t)-\psi(\mathbf{x},t)$ approximates
	$\nabla\psi(\mathbf{x},t)\cdot(\mathbf{y}-\mathbf{x})$ when
	$|\mathbf{y}-\mathbf{x}|$ is small. Therefore, the above nonlocal quadratic form
	plays the same regularizing role as $\frac{\ell_c}{2}\int_{\Omega}|\nabla\psi|^2\,\mathrm{d}v$ in the classical Ambrosio--Tortorelli crack-surface functional \cite{bourdin2008variational}. The main
	difference is that, in the present formulation, the regularization is induced by
	the kernel function and the nonlocal interaction length scale$\delta$, rather than by an
	explicitly prescribed length scale $\ell_c$.
\end{remark}

\begin{figure}[htbp]
	\centering
	\begin{tikzpicture}
\node[anchor=south west, inner sep=0] (img) 
    at (0,0) {\includegraphics[width=0.6\textwidth]{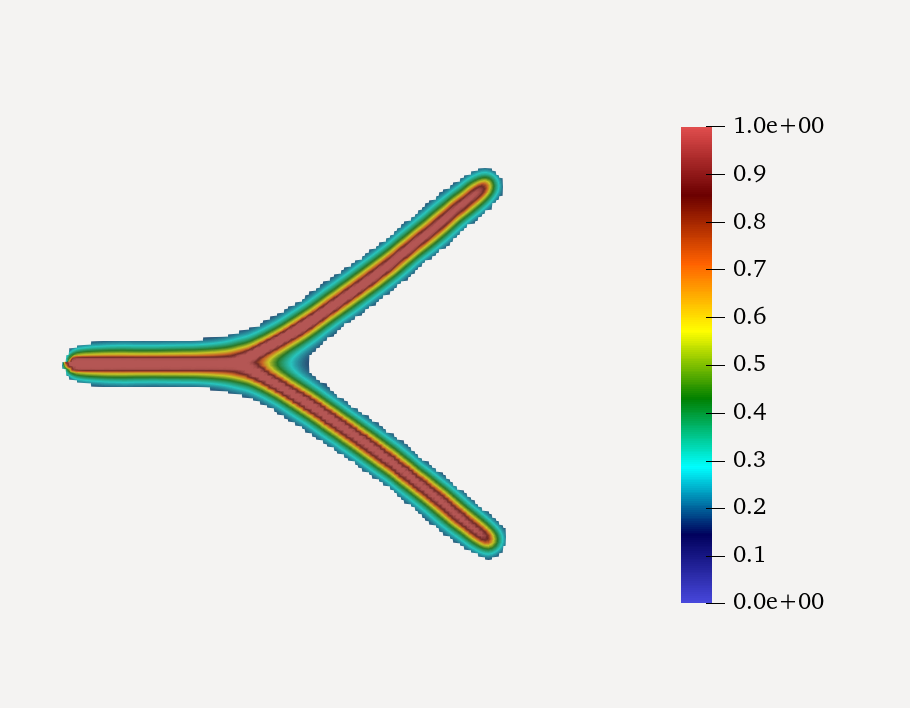}};
\begin{scope}[x={(img.south east)}, y={(img.north west)}]
\node at (0.1,0.1) {\Large $\Omega$};
\node at (0.9,0.485) {$\psi$};
\node at (0.5,0.485) {$\mathcal{A}_\delta(\psi,\mathcal{L}_\delta\psi)$};
\end{scope}
\end{tikzpicture}
  \caption{Schematic illustration of a diffusive crack characterized by the nonlocal crack-surface functional $\mathcal{A}_\delta(\psi,\mathcal{L}_\delta\psi)$.}
	\label{fig:diffusive_crack}
\end{figure}

\begin{definition}
Let $\Omega\subset \mathbb{R}^d$ be a bounded Lipschitz domain and let 
	$\omega_\delta: \mathbb{R}^d \to [0,\infty)$ be a kernel function satisfies assumption [\textbf{A1}] in Eq.\eqref{assump:vector-kernel}. For all $\psi, \phi\in L^2(\Omega)$, define the $\mathcal{M}_\omega(\Omega)$-inner product as\cite{zhou2010mathematical,du2013analysis} 
	
		\begin{equation}
		(\psi,\phi)_{\mathcal{M}_{\omega}(\Omega)}
		=\int_{\mathbb{R}^d}
		\overline{\hat{\phi}(\boldsymbol{\xi})}\,
		(1+m_\omega(\boldsymbol{\xi}))\,
		\hat{\psi}(\boldsymbol{\xi})\,\mathrm{d}v_{\boldsymbol{\xi}},
	\end{equation}
where $\psi, \phi$ are understood as their zero extension outside $\Omega$, $\hat{\psi}(\bm{\xi})$ and $\hat{\phi}(\bm{\xi})$ are the Fourier transform of $\psi$ and $\phi$, respectively, and $m_\omega(\bm{\xi})$ is defined by
	\begin{equation}
		m_\omega(\bm{\xi})=\int_{B_\delta(\mathbf{0})}\omega_\delta(|\mathbf{y}|)(1-\cos(2\pi\bm{\xi}\cdot\mathbf{y}))\ \mathrm{d}v_{\mathbf{y}}.
	\end{equation}
The function space $\mathcal{M}_\omega(\Omega)$ is then defined as
	\begin{equation}
	\mathcal{M}_{\omega}(\Omega)
	=\Big\{
	\psi\in L^2(\Omega)
	\ \Big|\
	\|\psi\|_{\mathcal{M}_{\omega}(\Omega)}<\infty
	\Big\}, 
\end{equation}
where $\|\psi\|_{\mathcal{M}_{\omega}(\Omega)}=\sqrt{(\psi,\psi)_{\mathcal{M}_{\omega}(\Omega)}}$.
\end{definition}

We next consider the first variation of the nonlocal crack-surface functional $\mathcal A_\delta(\psi,\mathcal L_\delta\psi)$. For each fixed time $t\in(0,T)$, assume that $\psi(\cdot,t)\in\mathcal M_\omega(\Omega)$. Taking the first variation of $\mathcal A_\delta$ with respect to $\psi$ gives the associated Euler--Lagrange condition \begin{equation}\label{eq:order_parameter} 
\psi+\delta^2\mathcal{L}_\delta\psi=0, \quad (\mathbf{x},t)\in\Omega\times(0,T). 
\end{equation}

\begin{remark}
In the local phase-field fracture model~\cite{Bourdin2000}, the order parameter 
$\psi$ describing the diffusive crack is governed by the elliptic equation:
\begin{equation}\label{eq:local_phase_field}
	\psi - \ell_c^{\,2}\Delta \psi = 0,
	\quad \forall\,(\mathbf{x},t)\in\Omega\times[0,T],
\end{equation}
where $\ell_c$ denotes the intrinsic length scale that controls the width of the 
diffusive crack. 
In the one-dimensional setting, the solution to 
Eq.~\eqref{eq:local_phase_field} takes the form 
$\psi(x)=\exp\!\left(-\ell_c^{-1}|x|\right)$, indicating that the length 
scale $\ell_c$ enters the solution explicitly. 

In contrast, in the proposed nonlocal model, the effective length scale 
associated with the diffusive crack is no longer imposed explicitly; 
instead, it is implicitly encoded in the nonlocal operator $\mathcal{L}_\delta$. 
This implicit dependence can be illustrated through the following 
one-dimensional example.
\end{remark}

Consider the following one-dimensional illustrative example:
\begin{equation}\label{eq:psi_1d}
	\left\{
	\begin{aligned}
		&\psi(x) 
		+ \delta^{2}\int_{x-\delta}^{\,x+\delta}
		\omega_\delta(|y-x|)\big(\psi(x)-\psi(y)\big)\,\mathrm{d}y = 0,
		\quad \forall\, x\in\mathbb{R},\\[4pt]
		&\psi(0)=1,\quad \lim_{|x|\to\infty}\psi(x)=0,
	\end{aligned}
	\right.
\end{equation}
where the kernel function $\omega_\delta$ is taken as the Gaussian kernel defined in 
Eq.~\eqref{eq:kernel} with $\alpha=(d+1)^2$. The solution to this one-dimensional problem takes the exponential form
\begin{equation}
	\psi(x)=\exp(\lambda_\delta |x|),
\end{equation}
where $\lambda_\delta<0$ is the negative root of the nonlinear algebraic equation
\begin{equation}\label{eq:nonlinear_eq}
	1+2\delta^{2}\int_{0}^{\delta}
	\omega_\delta(r)\bigl(1-\cosh(\lambda_\delta r)\bigr)\,\mathrm{d}r=0.
\end{equation}

\begin{figure}[h]
	\centering	
	\subfigure[]{
		\includegraphics[width=0.475\textwidth]{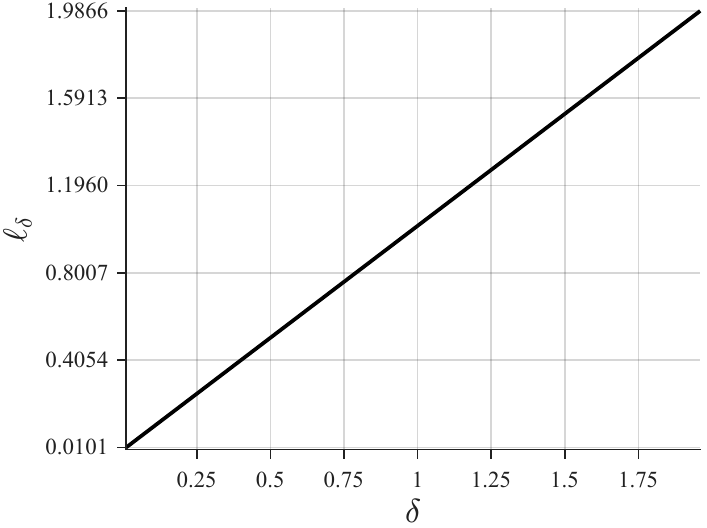}
		\label{fig:delta_vs_lc}
	}
	\subfigure[]{
		\includegraphics[width=0.475\textwidth]{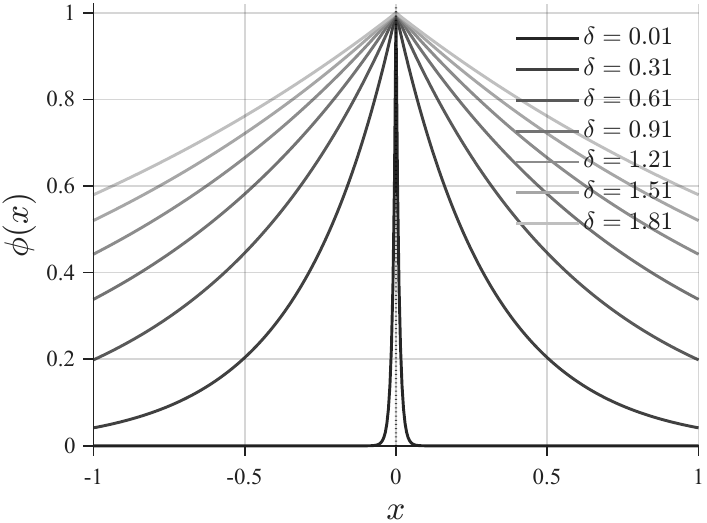}
		\label{fig:phi_profiles}
	}
	\caption{(a) Relationship between the nonlocal characteristic length $\delta$ and the diffusive-crack length scale $\ell_\delta$. (b) Profiles of the phase-field function $\psi(x)=\exp(\lambda_\delta |x|)$ for selected values of $\delta$.}
	\label{fig:delta_vs_lc_phi_profiles}
\end{figure}

By solving Eq.~\eqref{eq:nonlinear_eq}, one obtains the relationship between the characteristic length scale of the diffusive crack, defined by $\ell_\delta=|\lambda_\delta|^{-1}$, and the nonlocal characteristic length $\delta$, as shown in Fig.~\ref{fig:delta_vs_lc_phi_profiles}(a). It can be observed that $\ell_\delta$ decreases as $\delta$ decreases and satisfies $\ell_\delta\to0$ as $\delta\to0$. The phase-field profiles in Fig.~\ref{fig:delta_vs_lc_phi_profiles}(b) further illustrate that the diffusive crack becomes increasingly localized for smaller $\delta$, indicating that it converges to a sharp crack in the local limit.

Equation~\eqref{eq:order_parameter} characterizes the geometric configuration of the diffusive crack, but alone does not describe how it initiates or propagates. Therefore, it is necessary to introduce an
appropriate constitutive law that governs the evolution of the diffusive crack.
Following the approach proposed in~\cite{Miehe2015a}, we define the 
\emph{nonlocal crack surface driving functional} as
\begin{equation}
	\mathcal{S}_d(\psi,\dot{\psi},\mathcal{Y})
	= \int_{\Omega}\frac{1}{\delta}\big(\mathcal{Y}(\psi,D)-\bar{\eta}\dot{\psi}\big)\dot{\psi}\,\mathrm{d}v,
	\quad (\mathbf{x},t)\in\Omega\times[0,T],
\end{equation}
where $\dot{\psi}=\partial_t\psi$, $\bar{\eta}\in\mathbb{R}_{\ge 0}$ is a viscous regularization parameter characterizing the rate-dependent viscous resistance to crack propagation, $\mathcal{Y}(\psi,D)$ denotes the 
\emph{nonlocal crack surface driving force}, and $D$ is an \emph{associated driving state function}
depending on the nonlocal elastic energy $\mathcal{W}_{\delta,e}(\mathbf{u})$.  
The details of $\mathcal{Y}(\psi,D)$ and $D$ will be discussed in 
Section~\ref{subsec:nonlocal_gover_eq}.

The constitutive law governing the evolution of the order parameter $\psi$ is given by
\begin{equation}
	\frac{\mathrm{d}}{\mathrm{d}t}\mathcal{A}_\delta(\psi,\mathcal{L}_\delta\psi)
	= \mathcal{S}_d(\psi,\dot{\psi},\mathcal{Y}),
\end{equation}
where $\frac{\mathrm{d}}{\mathrm{d}t}\mathcal{A}_\delta(\psi,\mathcal{L}_\delta\psi)
= \int_{\Omega}\frac{1}{\delta}\big(\psi+\delta^{2}\mathcal{L}_\delta\psi\big)
\,\dot{\psi}\,\mathrm{d}v$. 

Consequently, the time evolution equation for $\psi$ is obtained as
\begin{equation}
	\bar{\eta}\dot{\psi}=\mathcal{Y}(\psi,D)-(\psi + \delta^{2}\mathcal{L}_\delta\psi),
	\quad \forall\,(\mathbf{x},t)\in\Omega\times[0,T].
\end{equation}

\begin{remark}
	To ensure the physical irreversibility of crack growth, the phase-field 
	variable must satisfy the constraint
	\begin{equation}
	\dot{\psi}(\mathbf{x},t)\ge 0,
	\quad \forall\,(\mathbf{x},t)\in\Omega\times[0,T].
	\end{equation}
	This condition prevents the healing of cracks and guarantees that the 
	degraded stiffness does not recover once damage has occurred.
\end{remark}

\subsection{Constitutive choices and Governing equations of nonlocal phase-field fracture model}\label{subsec:nonlocal_gover_eq}
\noindent We now introduce a unified nonlocal phase-field formulation for diffusive crack propagation in solids. We first present the constitutive description and then introduce the governing equations for the coupled nonlocal phase-field system.

Inspired by the spectral tension-compression decomposition of elastic strain energy introduced by Miehe et al.~\cite{Miehe2010}, we extend this concept to the nonlocal setting and define the \emph{degraded nonlocal spectral tension--compression energy} as
\begin{equation}
\mathcal{W}_{\delta,de}(\mathbf{u},\psi)
= g(\psi)\,\mathcal{W}_{\delta,e}^{+}(\mathbf{u})
+ \mathcal{W}_{\delta,e}^{-}(\mathbf{u}),
\label{eq:energy_model_II}
\end{equation}
where the tensile and compressive parts of the energy, $\mathcal{W}_{\delta,e}^{\pm}$, are defined by
\begin{equation}
\mathcal{W}_{\delta,e}^{\pm}
= \frac{\lambda}{2}\,\langle \operatorname{tr}\boldsymbol{\varepsilon}_\delta \rangle_{\pm}^{2}
+ \mu \sum_{a=1}^{d} \langle \varepsilon_{\delta,a} \rangle_{\pm}^{2}.
\end{equation}
Here, $\operatorname{tr}\boldsymbol{\varepsilon}_\delta$ denotes the trace of the nonlocal strain tensor $\boldsymbol{\varepsilon}_\delta(\mathbf{u})$, 
$\{\varepsilon_{\delta,a}\}_{a=1}^{d}$ are its principal values, and the Macaulay brackets are defined as $\langle x \rangle_{\pm} = \frac{x \pm |x|}{2}.$

The corresponding degraded nonlocal Cauchy stress $\boldsymbol{\sigma}_{\delta}^{\mathrm{d}}$ is obtained as
\begin{equation}
\boldsymbol{\sigma}_\delta^{\mathrm{d}}(\mathbf{u},\psi)
= \frac{\partial \mathcal{W}_{\delta,de}(\mathbf{u},\psi)}{\partial \boldsymbol{\varepsilon}_{\delta}}
= g(\psi)\,\boldsymbol{\sigma}_\delta^{+}
+ \boldsymbol{\sigma}_\delta^{-},
\end{equation}
where the tensile and compressive stress components are given by
\begin{equation}
\boldsymbol{\sigma}_\delta^{\pm}
= \lambda \langle \operatorname{tr}\boldsymbol{\varepsilon}_\delta \rangle_{\pm}\mathbf{I}
+ 2\mu \sum_{a=1}^{d} \langle \varepsilon_{\delta,a} \rangle_{\pm}\,
\mathbf{n}_a \otimes \mathbf{n}_a,
\end{equation}
and $\{\mathbf{n}_a\}_{a=1}^{d}$ are the orthonormal eigenvectors associated with the principal strains $\{\varepsilon_{\delta,a}\}_{a=1}^{d}$.

The nonlocal crack surface driving force \(\mathcal{Y}(\psi, D)\) is expressed as
\begin{equation}
	\mathcal{Y}(\psi, D) = (1 - \psi)D,
\end{equation}
where the associated driving state function $D$ is chosen according to the fracture regime:
\begin{equation}
	D=
	\begin{cases}
		\displaystyle \frac{2\mathcal{W}_{\delta,e}^{+}(\mathbf{u}(\mathbf{x},t))}{G_c/\delta},
		& \text{for rate-dependent fracture }(\bar{\eta}>0),\\[8pt]
		\displaystyle \max_{s\in[0,t]}\frac{2\mathcal{W}_{\delta,e}^{+}(\mathbf{u}(\mathbf{x},s))}{G_c/\delta},
		& \text{for rate-independent fracture }(\bar{\eta}=0).
	\end{cases}
\end{equation}
Here, $G_c$ is the critical energy release rate of solid materials. There are several widely used associated driving state functions in the phase-field fracture literature; interested readers can refer to \cite{Miehe2015a}.

\begin{definition}\label{def:coupled_system}
Suppose the displacement field $\mathbf{u} : \Omega\times[0,T]\to\mathbb{R}^d$ and the phase-field variable $\psi : \Omega\times[0,T]\to\mathbb{R}_{\ge0}$ satisfy the regularity assumptions $\mathbf{u} \in L^2(0,T;\mathcal{V}_{\omega,\mathbf u_g}(\Omega))$, $\dot{\mathbf{u}} \in H^1(0,T;L^2(\Omega;\mathbb{R}^d))$ and $\psi \in H^1\big(0,T;\mathcal{M}_\omega(\Omega)\big)$. Then the evolution of diffusive cracks is described by the following coupled
nonlocal system:
	\begin{itemize}
		\item[(i)] \textbf{Balance of linear momentum:}
		\begin{equation}\label{eq:def_motion}
			\rho\,\partial_t^2\mathbf{u}
			= \mathcal{D}_\delta\big(\bm{\sigma}_{\delta}^{\mathrm{d}}\big)
			+ \mathbf{b},
			\quad
			\text{in }(\Omega\setminus\Gamma_\delta)\times[0,T].
		\end{equation}
		\item[(ii)] \textbf{Phase-field evolution equation:}
		
\begin{equation}\label{eq:def_phase_system}
	\left\{
	\begin{aligned}
	&\bar{\eta}\dot{\psi}=
	\mathcal{Y}(\psi,D) -(\psi + \delta^{2}\mathcal{L}_\delta\psi),\quad \text{in }\Omega\times[0,T], \\
	&\dot{\psi}\ge0,\quad \text{in }\Omega\times[0,T].
		\end{aligned}
		\right.
\end{equation}

\item[(iii)] \textbf{Nonlocal traction and displacement boundary conditions:}
		\begin{equation}
			\mathcal{N}_\delta\big(\bm{\sigma}_{\delta}^{\mathrm{d}}\big)
			= \mathbf{t},\,\text{on }\Gamma_{\delta,t}\times[0,T],
			\quad
			\mathbf{u} = \mathbf{u}_g,
			\,\text{on }\Gamma_{\delta,u}\times[0,T].
		\end{equation}
		\item[(iv)] \textbf{Initial conditions:}
		\begin{equation}
				\left\{\begin{aligned}
					&\mathbf{u}(\mathbf{x},0)=\mathbf{u}_{0}(\mathbf{x}),\,
		\dot{\mathbf{u}}(\mathbf{x},0)=\mathbf{v}_{0}(\mathbf{x}),\quad\forall\mathbf{x}\in\Omega,\\
		&\psi(\mathbf{x},0)=\psi_{0}(\mathbf{x}),
		\quad \forall\mathbf{x}\in\Omega.
			\end{aligned}
			\right.
		\end{equation}
	\end{itemize}
\end{definition}

\begin{remark}
A rigorous analytical theory for existence, uniqueness, and regularity of the proposed nonlocal phase-field fracture model is beyond the scope of this paper.
Nevertheless, the numerical simulations in Section~\ref{sec:numerical_eg} consistently yield stable, reproducible, and physically meaningful responses, providing empirical evidence of the computational robustness and practical solvability of the formulation in the tested regimes.
These observations should be interpreted as computational support only and do not constitute a mathematical proof of well-posedness.
A complete theoretical analysis will be addressed in future work.
\end{remark}

\begin{theorem}\label{thm:boundness_psi}
If $\psi\in H^1\big(0,T;C(\overline{\Omega})\big)$ is a solution to the rate-dependent $(\bar{\eta}>0)$ nonlocal phase-field equation with the explicit irreversibility constraint
\begin{equation}
	\left\{
	\begin{aligned}
		&\bar{\eta}\dot{\psi}=
		\mathcal{Y}(\psi,D) -(\psi + \delta^{2}\mathcal{L}_\delta\psi),\quad \text{in }\Omega\times[0,T], \\
		&\dot{\psi}\ge0,\quad \text{in }\Omega\times[0,T],
	\end{aligned}
	\right.
\end{equation}
with initial condition $0\leq \psi(\mathbf{x},0)\leq 1$ for all $\mathbf{x}\in\Omega$, then $\psi(\mathbf{x},t)\in[0,1]$ for all $(\mathbf{x},t)\in\Omega\times[0,T]$.
\end{theorem}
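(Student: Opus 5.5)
The lower bound and the upper bound are proved separately. The lower bound is immediate from the irreversibility constraint: $\dot\psi\ge0$ and $\psi\in H^1(0,T;C(\overline\Omega))$ give, for each fixed $\mathbf{x}$,
\[
\psi(\mathbf{x},t)=\psi(\mathbf{x},0)+\int_0^t\dot\psi(\mathbf{x},s)\,\mathrm{d}s\ \ge\ \psi(\mathbf{x},0)\ \ge\ 0 .
\]
The absolute continuity in time provided by the $H^1$ regularity justifies this integral representation pointwise in $\mathbf{x}$, since $C(\overline\Omega)$-valued $H^1$ functions are continuous in $t$ with values in $C(\overline\Omega)$.

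For the upper bound $\psi\le1$ we use a maximum-principle argument at the first time the spatial maximum would exceed $1$. Set $M(t)=\max_{\mathbf{x}\in\overline\Omega}\psi(\mathbf{x},t)$, which is attained by continuity. Suppose $M(t_1)>1$ for some $t_1$, and let $t_0=\sup\{t\le t_1:\ M(t)\le1\}$. Since $M(0)\le1$ and $M$ is continuous, we have $M(t_0)=1$ and $M>1$ on $(t_0,t_1]$. At a time $t\in(t_0,t_1]$, take a maximizer $\mathbf{x}^*$ with $\psi(\mathbf{x}^*,t)=M(t)>1$. Then:
\begin{itemize}
\item $\mathcal{Y}=(1-\psi)D\le0$, because $D\ge0$ (it is built from $\mathcal{W}^+_{\delta,e}\ge0$);
\item $\mathcal{L}_\delta\psi(\mathbf{x}^*)=\int\omega_\delta(\psi(\mathbf{x}^*)-\psi(\mathbf{y}))\,\mathrm{d}v_\mathbf{y}\ge0$, because $\omega_\delta\ge0$ and $\mathbf{x}^*$ is a global maximizer;
\item $\psi(\mathbf{x}^*,t)>1>0$.
\end{itemize}
Hence $\bar\eta\,\dot\psi(\mathbf{x}^*,t)\le -\psi(\mathbf{x}^*,t)<-1<0$, which contradicts $\dot\psi\ge0$. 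So $M\le1$ for all $t$.

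In fact the constraint $\dot\psi\ge0$ together with the equation forces $\mathcal{Y}-(\psi+\delta^2\mathcal{L}_\delta\psi)\ge0$ everywhere. This gives a cleaner pointwise version that avoids the first-exit-time construction. At any $(\mathbf{x}^*,t)$ where $\psi$ attains its spatial maximum and that maximum exceeds $1$, the right-hand side is at most $-\psi(\mathbf{x}^*,t)<0$, which is incompatible with $\dot\psi\ge0$. Thus no time can have $M(t)>1$. I would present this direct pointwise contradiction and keep the continuity argument only as a remark.

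The main obstacle is making the pointwise evaluation of the equation legitimate. With $\psi\in H^1(0,T;C(\overline\Omega))$, the derivative $\dot\psi$ exists only for a.e.\ $t$, so the equation and the inequality hold for a.e.\ $t$. I would therefore argue for a.e.\ $t$, showing that $M(t)\le1$ on a full-measure set of times. Then $M$ is continuous in $t$, because $\psi\in C([0,T];C(\overline\Omega))$, so the bound extends to every $t$.

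A second subtlety is the domain of integration in $\mathcal{L}_\delta$. The ball $B_\delta(\mathbf{x}^*)$ may leave $\Omega$, so I interpret the integral over $B_\delta(\mathbf{x}^*)\cap\Omega$, or equivalently assume $\psi$ is extended so that the maximum is global on the interaction region. The sign of $\mathcal{L}_\delta\psi$ at the maximizer then still follows. Finally, $D\ge0$ is needed in both fracture regimes, and it holds in both by definition.
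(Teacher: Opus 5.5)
Your proposal is correct and follows essentially the same route as the paper. The lower bound comes from $\dot\psi\ge0$ together with $\psi(\mathbf{x},0)\ge0$, and the upper bound is obtained by contradiction at a spatial maximizer with $\psi>1$, where $\mathcal{L}_\delta\psi\ge0$ and $\mathcal{Y}=(1-\psi)D\le0$ force $\bar\eta\dot\psi<0$. Your additional handling of the equation holding only for a.e.\ $t$ (extended to all $t$ via continuity of $M$) and of the interaction ball leaving $\Omega$ refines details the paper passes over, but the argument is the same.
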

\begin{proof}
The lower bound follows directly from the irreversibility constraint.
Since $\dot\psi\ge 0$ in $\Omega\times[0,T]$ and $\psi(\mathbf{x},0)\ge 0$ in $\Omega$, we have $
\psi(\mathbf{x},t)\ge \psi(\mathbf{x},0)\ge 0,$ $\forall(\mathbf{x},t)\in\Omega\times[0,T]$. 

It remains to prove $\psi\le 1$. Fix any $t\in[0,T]$, and let
$\mathbf{x}_t\in\overline\Omega$ be a point such that $
\psi(\mathbf{x}_t,t)=\max_{\mathbf{x}\in\overline\Omega}\psi(\mathbf{x},t)$.
Assume by contradiction that $\psi(\mathbf{x}_t,t)>1$. From the definition of $\mathcal{L}_\delta\psi(\mathbf{x},t)$ and $\omega_\delta\ge 0$, we obtain at the maximizer $\mathbf{x}_t$ that
$\psi(\mathbf{x}_t,t)-\psi(\mathbf{y},t)\ge 0$ for all
$\mathbf{y}\in B_\delta(\mathbf{x}_t)$, hence $
\mathcal{L}_\delta\psi(\mathbf{x}_t,t)\ge 0$.
Therefore,
\begin{equation}
\psi(\mathbf{x}_t,t)+\delta^2\mathcal{L}_\delta\psi(\mathbf{x}_t,t)
\ge \psi(\mathbf{x}_t,t)>1.
\end{equation}

On the other hand, $D(\mathbf{x},t)\ge 0$ by definition of the associated driving state function, and
$\mathcal{Y}(\psi,D)=(1-\psi)D$. Since $\psi(\mathbf{x}_t,t)>1$, we have
\begin{equation}
\mathcal{Y}(\psi,D)(\mathbf{x}_t,t)
=(1-\psi(\mathbf{x}_t,t))D(\mathbf{x}_t,t)\le 0.
\end{equation}
Substituting into
\begin{equation}
\bar{\eta}\dot\psi
=\mathcal{Y}(\psi,D)-\big(\psi+\delta^2\mathcal{L}_\delta\psi\big),
\end{equation}
yields
\begin{equation}
\bar{\eta}\dot\psi(\mathbf{x}_t,t)
\le 0-\big(\psi(\mathbf{x}_t,t)+\delta^2\mathcal{L}_\delta\psi(\mathbf{x}_t,t)\big)<0,
\end{equation}
which contradicts the irreversibility condition $\dot\psi\ge 0$.
Hence $\max_{\mathbf{x}\in\overline\Omega}\psi(\mathbf{x},t)\le 1$ for all $t\in[0,T]$, i.e.,
$\psi(\mathbf{x},t)\le 1$ in $\Omega\times[0,T]$.

Combining the two bounds gives $
0\le \psi(\mathbf{x},t)\le 1$, $ \forall(\mathbf{x},t)\in\Omega\times[0,T]$.

\end{proof}

\begin{remark}
In Definition~\ref{def:coupled_system}, the phase-field variable is assumed to satisfy $\psi\in H^1\big(0,T;\mathcal{M}_\omega(\Omega)\big)$, whereas Theorem~\ref{thm:boundness_psi} is proved under the stronger assumption $\psi\in H^1\big(0,T;C(\overline{\Omega})\big)$. This additional regularity is introduced to justify a pointwise maximum argument. A systematic characterization of the relation between $\mathcal{M}_\omega(\Omega)$ and $C(\overline{\Omega})$ (for example, embedding or compactness properties under assumptions on $\omega_\delta$ and $\Omega$) is beyond the scope of the present work and will be investigated in future work.
\end{remark}

\begin{theorem}\label{thm:Hamiltonian_balance}
	Assume that $(\mathbf{u},\psi)$ is a solution of the
	rate-dependent $(\bar{\eta}>0)$ coupled nonlocal phase-field fracture system in
	Definition~\ref{def:coupled_system}. Assume a possibly nonhomogeneous but time-independent prescribed displacement
	on $\Gamma_{\delta,u}$, and assume that the body force $\mathbf b$ and the
	prescribed nonlocal traction $\mathbf t$ are also time-independent. Define the Hamiltonian functional
	\begin{equation}
		\mathcal{H}(t)
		=
		\int_{\Omega\setminus\Gamma_\delta}
		\frac{\rho}{2}|\dot{\mathbf u}|^2\,\mathrm dv
		+
		\int_{\Omega}
		\mathcal{W}_{\delta,de}(\mathbf u,\psi)\,\mathrm dv
		+
		G_c\mathcal{A}_\delta(\psi,\mathcal L_\delta\psi)-\ell_u(\mathbf u),
	\end{equation}
	where $ \ell_u(\mathbf u)=	(\mathbf b,\mathbf u)_{L^2(\Omega\setminus\Gamma_\delta)}
	+
	(\mathbf t,\mathbf u)_{\Gamma_{\delta,t}}$ denotes the external loading
	potential. Then $\mathcal H(t)$ satisfies the Hamiltonian balance law
\begin{equation}
	\frac{\mathrm d}{\mathrm dt}\mathcal H(t)
	=-\eta\int_\Omega |\dot\psi|^2\,\mathrm dv
	\le 0,
\end{equation}
where $\eta=\frac{\bar{\eta} G_c}{\delta}$. 
\end{theorem}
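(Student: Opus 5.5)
The plan is to differentiate $\mathcal H(t)$ term by term, test the momentum balance \eqref{eq:def_motion} with $\dot{\mathbf u}$, and test the phase-field equation \eqref{eq:def_phase_system} with $\frac{G_c}{\delta}\dot\psi$. The coupling terms should then cancel and leave only the viscous dissipation. I take the standard degradation function $g(\psi)=(1-\psi)^2$, which the statement implicitly requires. I also read the first line of \eqref{eq:def_phase_system} as an equality holding pointwise, which is how it is stated in Definition~\ref{def:coupled_system}; in the rate-dependent case $D=2\mathcal W^+_{\delta,e}/(G_c/\delta)$.

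\textbf{Step 1 (elastic part).} The maps $\boldsymbol\varepsilon\mapsto\mathcal W^\pm_{\delta,e}$ are $C^1$ with derivatives $\boldsymbol\sigma^\pm_\delta$, since $\langle x\rangle_\pm^2$ is $C^1$ and the spectral functions are isotropic. By the chain rule,
\begin{equation*}
\frac{\mathrm d}{\mathrm dt}\int_\Omega\mathcal W_{\delta,de}\,\mathrm dv=\int_\Omega \boldsymbol\sigma^{\mathrm d}_\delta:\boldsymbol\varepsilon_\delta(\dot{\mathbf u})\,\mathrm dv+\int_\Omega g'(\psi)\,\mathcal W^+_{\delta,e}\,\dot\psi\,\mathrm dv .
\end{equation*}
This uses that $\mathcal G_\delta$ is linear and commutes with $\partial_t$, which is justified by Lemma~\ref{lem:nonlocal-gradient-estimate}.

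\textbf{Step 2 (nonlocal integration by parts).} This is the key identity and the main obstacle. For symmetric $\boldsymbol\sigma$ and admissible $\mathbf v$, I would establish
\begin{equation*}
\int_\Omega\boldsymbol\sigma:\mathcal G_\delta(\mathbf v)\,\mathrm dv=-\int_{\Omega\setminus\Gamma_\delta}\mathcal D_\delta(\boldsymbol\sigma)\cdot\mathbf v\,\mathrm dv-\int_{\Gamma_\delta}\mathcal N_\delta(\boldsymbol\sigma)\cdot\mathbf v\,\mathrm dv .
\end{equation*}
The sign conventions have to match those used when deriving \eqref{eq:motion}–\eqref{eq:bc} from Hamilton's principle. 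The proof writes $\mathbf v(\mathbf y)-\mathbf v(\mathbf x)$ in the double integral, swaps $\mathbf x\leftrightarrow\mathbf y$, and uses the antisymmetry $\boldsymbol\omega(\mathbf y,\mathbf x)=-\boldsymbol\omega(\mathbf x,\mathbf y)$. Fubini is legitimate by the $L^2$ bounds of Lemma~\ref{lem:nonlocal-gradient-estimate}.

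Apply this with $\mathbf v=\dot{\mathbf u}$. Because $\mathbf u_g$ is time-independent, $\dot{\mathbf u}=0$ on $\Gamma_{\delta,u}$, and on $\Gamma_{\delta,t}$ the traction condition replaces $\mathcal N_\delta(\boldsymbol\sigma^{\mathrm d}_\delta)$ by $\mathbf t$. Combining with \eqref{eq:def_motion} gives
\begin{equation*}
\int_\Omega\boldsymbol\sigma^{\mathrm d}_\delta:\boldsymbol\varepsilon_\delta(\dot{\mathbf u})\,\mathrm dv=-\frac{\mathrm d}{\mathrm dt}\int_{\Omega\setminus\Gamma_\delta}\frac\rho2|\dot{\mathbf u}|^2\,\mathrm dv+\frac{\mathrm d}{\mathrm dt}\ell_u(\mathbf u),
\end{equation*}
where the last term uses that $\mathbf b$ and $\mathbf t$ are time-independent. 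Hence the kinetic, elastic-displacement and loading contributions to $\mathcal H'(t)$ cancel exactly. The care needed here is to track the split between the interior $\Omega\setminus\Gamma_\delta$ and the nonlocal boundary $\Gamma_\delta$, and the symmetry of $\boldsymbol\sigma^{\mathrm d}_\delta$, which allows replacing $\mathcal G_\delta$ by $\boldsymbol\varepsilon_\delta$.

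\textbf{Step 3 (crack part).} By the formula already given in Section~\ref{subsec:nonlocal_geometric_functional}, which rests on the symmetry of $\mathcal L_\delta$ in $L^2$,
\begin{equation*}
G_c\frac{\mathrm d}{\mathrm dt}\mathcal A_\delta=\frac{G_c}{\delta}\int_\Omega(\psi+\delta^2\mathcal L_\delta\psi)\,\dot\psi\,\mathrm dv .
\end{equation*}
With $g'(\psi)=-2(1-\psi)$, the remaining term from Step 1 is
\begin{equation*}
-\int_\Omega 2(1-\psi)\mathcal W^+_{\delta,e}\,\dot\psi\,\mathrm dv=-\frac{G_c}{\delta}\int_\Omega\mathcal Y(\psi,D)\,\dot\psi\,\mathrm dv .
\end{equation*}
Adding these two and substituting the phase-field equation $\psi+\delta^2\mathcal L_\delta\psi-\mathcal Y=-\bar\eta\dot\psi$ gives
\begin{equation*}
\mathcal H'(t)=-\frac{\bar\eta G_c}{\delta}\int_\Omega|\dot\psi|^2\,\mathrm dv\le 0,
\end{equation*}
which is the claim with $\eta=\bar\eta G_c/\delta$. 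The remaining routine justification is differentiating under the integral sign for a.e. $t$, which the assumed $H^1$-in-time regularities support.
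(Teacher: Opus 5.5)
Your argument is essentially the paper's proof (written out in Appendix~\ref{app:energy-dissipation-proof}), which follows the same steps: test the momentum balance with $\dot{\mathbf u}$ and use nonlocal integration by parts with the traction condition, apply the chain rule with $g(\psi)=(1-\psi)^2$, use the symmetry of $\mathcal L_\delta$ for $\frac{\mathrm d}{\mathrm dt}\mathcal A_\delta$, and test the phase-field equation with $\frac{G_c}{\delta}\dot\psi$. One sign slip in Step 2 needs fixing: with the paper's convention $\mathcal N_\delta(\boldsymbol\sigma)=-\int_\Omega(\boldsymbol\sigma(\mathbf x)+\boldsymbol\sigma(\mathbf y))\cdot\boldsymbol\omega(\mathbf x,\mathbf y)\,\mathrm dv_{\mathbf y}$, your swap-and-antisymmetry computation actually yields $\int_\Omega\boldsymbol\sigma:\mathcal G_\delta(\mathbf v)\,\mathrm dv=-\int_{\Omega\setminus\Gamma_\delta}\mathcal D_\delta(\boldsymbol\sigma)\cdot\mathbf v\,\mathrm dv+\int_{\Gamma_\delta}\mathcal N_\delta(\boldsymbol\sigma)\cdot\mathbf v\,\mathrm dv$; with the minus sign you wrote, the traction power would enter as $-(\mathbf t,\dot{\mathbf u})_{\Gamma_{\delta,t}}$ and would not cancel $\frac{\mathrm d}{\mathrm dt}\ell_u(\mathbf u)$, whereas with the plus sign your stated consequence and the rest of the argument go through exactly as in the paper.
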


\begin{proof}
	The proof is based on testing the coupled system by the corresponding rate
	variables. Multiplying the nonlocal momentum balance by $\dot{\mathbf u}$ and
	using the nonlocal integration-by-parts identity together with the nonlocal
	traction condition gives
	\begin{equation}\label{eq:kinetic_balance_for_energy_thm}
		\begin{aligned}
			\frac{\mathrm d}{\mathrm dt}
			\int_{\Omega\setminus\Gamma_\delta}
			\frac{\rho}{2}|\dot{\mathbf u}|^2\,\mathrm d v
			&=
			-\int_\Omega
			\boldsymbol\sigma_\delta^{\mathrm d}:
			\dot{\boldsymbol\varepsilon}_\delta
			\,\mathrm d v
			+
			(\mathbf b,\dot{\mathbf u})_{L^2(\Omega\setminus\Gamma_\delta)}
			+
			(\mathbf t,\dot{\mathbf u})_{\Gamma_{\delta,t}} .
		\end{aligned}
	\end{equation}
	Since the external loading potential is defined by
	\begin{equation}
		\ell_u(\mathbf u)
		=
		(\mathbf b,\mathbf u)_{L^2(\Omega\setminus\Gamma_\delta)}
		+
		(\mathbf t,\mathbf u)_{\Gamma_{\delta,t}},
	\end{equation}
	and the external data are assumed to be time-independent, we have
	\begin{equation}\label{eq:external_potential_derivative}
		\frac{\mathrm d}{\mathrm dt}\ell_u(\mathbf u)
		=
		(\mathbf b,\dot{\mathbf u})_{L^2(\Omega\setminus\Gamma_\delta)}
		+
		(\mathbf t,\dot{\mathbf u})_{\Gamma_{\delta,t}} .
	\end{equation}
	
	On the other hand, the chain rule for the degraded nonlocal elastic energy,
	combined with the phase-field evolution equation, yields
	\begin{equation}\label{eq:strain_fracture_balance_for_energy_thm}
		\begin{aligned}
			&
			\frac{\mathrm d}{\mathrm dt}
			\int_\Omega
			\mathcal W_{\delta,de}(\mathbf u,\psi)\,\mathrm d v
			+
			G_c
			\frac{\mathrm d}{\mathrm dt}
			\mathcal A_\delta(\psi,\mathcal L_\delta\psi)
			\\
			&\qquad
			=
			\int_\Omega
			\boldsymbol\sigma_\delta^{\mathrm d}:
			\dot{\boldsymbol\varepsilon}_\delta
			\,\mathrm d v
			-
			\frac{\bar{\eta}G_c}{\delta}
			\int_\Omega |\dot\psi|^2\,\mathrm d v .
		\end{aligned}
	\end{equation}
	
	Adding \eqref{eq:kinetic_balance_for_energy_thm} and
	\eqref{eq:strain_fracture_balance_for_energy_thm}, and then subtracting
	\eqref{eq:external_potential_derivative}, the stress-power terms and the
	external-power terms cancel. Therefore,
	\begin{equation}
		\frac{\mathrm d}{\mathrm dt}\mathcal H(t)
		=
		-
		\frac{\bar{\eta}G_c}{\delta}
		\int_\Omega |\dot\psi|^2\,\mathrm d v
		=
		-\eta
		\int_\Omega |\dot\psi|^2\,\mathrm d v
		\le 0,
	\end{equation}
	where $\eta:=\bar{\eta}G_c/\delta$. Hence, $\mathcal H(t)$ is non-increasing
	in time. The detailed algebraic derivation of
	\eqref{eq:kinetic_balance_for_energy_thm} and
	\eqref{eq:strain_fracture_balance_for_energy_thm} is provided in
	Appendix~\ref{app:energy-dissipation-proof}.
	
\end{proof}

\section{Numerical scheme for nonlocal phase-field fracture models}\label{sec:numerical_scheme}
\noindent This section is devoted to the numerical approximation of the coupled nonlocal system governing diffusive crack propagation introduced in Section~\ref{subsec:nonlocal_gover_eq}. Section~\ref{subsec:auxiliary_variable} introduces an auxiliary-variable reformulation for the nonlinear energy contribution.  Section~\ref{subsec:spa_temp} presents a staggered alternating time-discretization strategy, and Section~\ref{subsec:spa_dis} describes the finite element discretization in space.

\subsection{Structure-preserving time discretization}
\label{subsec:auxiliary_variable}

\noindent
The degraded nonlocal elastic energy introduces nonlinear coupling between the displacement field and the phase-field variable. To obtain a linearly implicit time-discrete formulation while retaining a discrete energy-dissipation
structure, we introduce a scalar auxiliary variable associated with the shifted nonlinear energy.

Let
\begin{equation}\label{eq:shifted_energy_A}
	A(\mathbf u,\psi)
	=
	\int_\Omega
	\mathcal W_{\delta,de}(\mathbf u,\psi)\,\mathrm d v
	-
	\ell_u(\mathbf u)
	+
	C_0,
\end{equation}
where $C_0>0$ is chosen sufficiently large such that
$A(\mathbf u,\psi)>0$.

The scalar auxiliary variable is defined by
\begin{equation}\label{eq:def_auxiliary_r}
	r(t) = \sqrt{A(\mathbf u(t),\psi(t))},
\end{equation}
and the corresponding modified total energy is then written as
\begin{equation}\label{eq:modified_energy_continuous}
	\mathcal H(t)
	=
	\frac{\rho}{2}
	\|\dot{\mathbf u}(t)\|_{L^2(\Omega\setminus\Gamma_\delta)}^2
	+
	G_c\mathcal A_\delta(\psi(t),\mathcal L_\delta\psi(t))
	+
	r(t)^2
	-
	C_0 .
\end{equation}

For later use, we denote the variational derivatives of $A$ by $	\mathbf F_u(\mathbf u,\psi)=
\frac{\delta A}{\delta\mathbf u}(\mathbf u,\psi)$ and  $F_\psi(\mathbf u,\psi)=
\frac{\delta A}{\delta\psi}(\mathbf u,\psi)$. Equivalently, for any admissible test functions
$\mathbf w\in\mathcal V_{\omega,\mathbf0}(\Omega)$ and
$\varphi\in\mathcal M_\omega(\Omega)$,
\begin{equation}\label{eq:def_Fu_Fpsi_weak}
	\begin{aligned}
		\left\langle\mathbf F_u(\mathbf u,\psi),\mathbf w\right\rangle
		&=
		\left(
		\boldsymbol\sigma_\delta^{\mathrm d}(\mathbf u,\psi),
		\boldsymbol\varepsilon_\delta(\mathbf w)
		\right)_{L^2(\Omega)}
		-
		\ell_u(\mathbf w),\\
		\left\langle F_\psi(\mathbf u,\psi),\varphi\right\rangle
		&=
		\int_\Omega
		\frac{\partial\mathcal W_{\delta,de}(\mathbf u,\psi)}
		{\partial\psi}
		\varphi\,\mathrm d v .
	\end{aligned}
\end{equation}
For the tensile--compressive split
$\mathcal W_{\delta,de}(\mathbf u,\psi)
=
g(\psi)\mathcal W_{\delta,e}^{+}(\mathbf u)
+
\mathcal W_{\delta,e}^{-}(\mathbf u)$, the second relation becomes
\begin{equation}
	\left\langle F_\psi(\mathbf u,\psi),\varphi\right\rangle
	=
	\int_\Omega
	g'(\psi)\mathcal W_{\delta,e}^{+}(\mathbf u)\,
	\varphi\,\mathrm d v .
\end{equation}
Using \eqref{eq:def_auxiliary_r}, the time derivative of the auxiliary variable
satisfies
\begin{equation}\label{eq:r_time_derivative}
	\dot r
	=
	\frac{1}{2\sqrt{A(\mathbf u,\psi)}}
	\left(
	\left\langle\mathbf F_u(\mathbf u,\psi),\dot{\mathbf u}\right\rangle
	+
	\left\langle F_\psi(\mathbf u,\psi),\dot\psi\right\rangle
	\right).
\end{equation}

Therefore, the weak form of the coupled system can be reformulated in terms of
$(\mathbf u,\psi,r)$ as
\begin{equation}\label{eq:continuous_auxiliary_system}
	\left\{
	\begin{aligned}
		&\rho(\ddot{\mathbf u},\mathbf w)_{L^2(\Omega\setminus\Gamma_\delta)}
		+
		\frac{r}{\sqrt{A(\mathbf u,\psi)}}
		\left\langle\mathbf F_u(\mathbf u,\psi),\mathbf w\right\rangle
		=0,\\
		&\eta(\dot\psi,\varphi)_{L^2(\Omega)}
		+
		G_c\,a_\delta(\psi,\varphi)
		+
		\frac{r}{\sqrt{A(\mathbf u,\psi)}}
		\left\langle F_\psi(\mathbf u,\psi),\varphi\right\rangle
		=0,\\
		&\dot r
		=
		\frac{1}{2\sqrt{A(\mathbf u,\psi)}}
		\left(
		\left\langle\mathbf F_u(\mathbf u,\psi),\dot{\mathbf u}\right\rangle
		+
		\left\langle F_\psi(\mathbf u,\psi),\dot\psi\right\rangle
		\right),
	\end{aligned}
	\right.
\end{equation}
where $	a_\delta(\psi,\varphi)=2\mathcal A_\delta(\psi,\mathcal L_\delta\psi)$. 

Let $\Delta t>0$ be the time step and $t^k=k\Delta t$ for $k=0,1,\cdots,\frac{T}{\Delta t}$. We denote by $\mathbf{u}^k$, $\psi^k$ and $r^k$ the numerical approximations of $\mathbf{u}(\cdot,t^k)$, $\psi(\cdot,t^k)$ and $r(t^k)$, respectively, and set $ \mathbf{b}^k= \mathbf{b}(\cdot,t^k)$ and  $ \mathbf{t}^k= \mathbf{t}(\cdot,t^k)$. For a sequence
$\{z^k\}_{k\ge0}$, we denote $D_t z^{k+1}=\frac{z^{k+1}-z^k}{\Delta t}$. 

Given $(\mathbf u^k,\mathbf v^k,\psi^k,r^k)$, where
$\mathbf v^k=\dot{\mathbf u}^k$, we set $A^k=A(\mathbf u^k,\psi^k)$, $\mathbf F_u^k=\mathbf F_u(\mathbf u^k,\psi^k)$ and $	F_\psi^k=F_\psi(\mathbf u^k,\psi^k)$. The auxiliary-variable ratio is defined by $	\xi^{k+1}=\frac{r^{k+1}}{\sqrt{A^k}}$. A first-order linearly implicit auxiliary-variable scheme is given as follows:
find $(\mathbf u^{k+1},\mathbf v^{k+1},\widetilde{\psi}^{k+1},r^{k+1})$ such that
\begin{equation}\label{eq:first_order_auxiliary_scheme}
	\left\{
	\begin{aligned}
		&\mathbf u^{k+1}
		=
		\mathbf u^k+\Delta t\,\mathbf v^{k+1},
		\\
		&\rho
		\big(D_t\mathbf v^{k+1},\mathbf w\big)_{L^2(\Omega\setminus\Gamma_\delta)}
		+
		\xi^{k+1}
		\left\langle
		\mathbf F_u^k,\mathbf w
		\right\rangle
		=0,
		\\
		&\eta
		\big(D_t\widetilde{\psi}^{k+1},\varphi\big)_{L^2(\Omega)}
		+
		G_c\,a_\delta(\widetilde{\psi}^{k+1},\varphi)
		+
		\xi^{k+1}
		\left\langle
		F_\psi^k,\varphi
		\right\rangle
		=0,
		\\
		&D_t r^{k+1}
		=
		\frac{1}{2\sqrt{A^k}}
		\left(
		\left\langle
		\mathbf F_u^k,D_t\mathbf u^{k+1}
		\right\rangle
		+
		\left\langle
		F_\psi^k,D_t\widetilde{\psi}^{k+1}
		\right\rangle
		\right),
	\end{aligned}
	\right.
\end{equation}
for all $\mathbf w\in\mathcal V_{\omega,\mathbf0}(\Omega)$ and $\varphi\in\mathcal{M}(\Omega)$. 
The irreversibility constraint is then enforced pointwise by the projection
\begin{equation}\label{eq:irreversibility_projection_auxiliary}
	\psi^{k+1}(\mathbf{x})
	=
	\max\left\{
	\widetilde{\psi}^{k+1}(\mathbf{x}),
	\psi^k(\mathbf{x})
	\right\},\quad
	\mathbf{x}\in\Omega .
\end{equation}

For a fixed value of $\xi^{k+1}$, the scheme
\eqref{eq:first_order_auxiliary_scheme} is linear. To express this more
compactly, we introduce the linear operators $\mathsf M_u$,
$\mathsf M_\psi$, and $\mathsf H_\psi$ by $\langle \mathsf M_u\mathbf v,\mathbf w\rangle=
\rho(\mathbf v,\mathbf w)_{L^2(\Omega\setminus\Gamma_\delta)}$, $\langle \mathsf M_\psi\chi,\varphi\rangle=
\eta(\chi,\varphi)_{L^2(\Omega)}$ and $\langle \mathsf H_\psi\chi,\varphi\rangle=
G_c\,a_\delta(\chi,\varphi)$ for all admissible test functions $\mathbf w$ and $\varphi$. Define $	\mathsf B_\psi=
\frac{1}{\Delta t}\mathsf M_\psi+\mathsf H_\psi $. The velocity and phase-field updates are decomposed as
\begin{equation}\label{eq:auxiliary_linear_decomposition}
	\mathbf v^{k+1}
	=
	\mathbf v_0^{k+1}
	+
	\xi^{k+1}\mathbf v_1^{k+1},
	\quad
	\widetilde{\psi}^{k+1}
	=
	\psi_0^{k+1}
	+
	\xi^{k+1}\psi_1^{k+1},
\end{equation}
where the two components are defined by the linear operator equations
\begin{equation}
	\begin{aligned}
		\mathsf M_u\mathbf v_0^{k+1}
		&=
		\mathsf M_u\mathbf v^k,
		&
		\mathsf M_u\mathbf v_1^{k+1}
		&=
		-\Delta t\,\mathbf F_u^k,
		\\
		\mathsf B_\psi\psi_0^{k+1}
		&=
		\frac{1}{\Delta t}\mathsf M_\psi\psi^k,
		&
		\mathsf B_\psi\psi_1^{k+1}
		&=
		-F_\psi^k.
	\end{aligned}
\end{equation}

The displacement is then recovered from the backward Euler kinematic relation
\begin{equation}
	\mathbf u^{k+1}
	=
	\mathbf u_0^{k+1}
	+
	\xi^{k+1}\mathbf u_1^{k+1},
\end{equation}
where $\mathbf u_0^{k+1}=
\mathbf u^k+\Delta t\,\mathbf v_0^{k+1}$ and $	\mathbf u_1^{k+1}=
\Delta t\,\mathbf v_1^{k+1}$.

Substituting the decompositions
\eqref{eq:auxiliary_linear_decomposition} into the auxiliary-variable equation
gives
\begin{equation}
	r^{k+1}
	=
	r^k
	+
	\frac{1}{2\sqrt{A^k}}
	\left[
	q_0+\xi^{k+1}q_1
	\right],
\end{equation}
where $q_0=
\Delta t\,
\left\langle
\mathbf F_u^k,\mathbf v_0^{k+1}
\right\rangle
+
\left\langle
F_\psi^k,
\psi_0^{k+1}-\psi^k
\right\rangle$ and $	q_1=
\Delta t\,
\left\langle
\mathbf F_u^k,\mathbf v_1^{k+1}
\right\rangle
+
\left\langle
F_\psi^k,
\psi_1^{k+1}
\right\rangle$.

Using $r^{k+1}=\xi^{k+1}\sqrt{A^k}$, we obtain the scalar algebraic equation
\begin{equation}\label{eq:xi_auxiliary_explicit}
	\xi^{k+1}
	=
	\frac{
		\dfrac{r^k}{\sqrt{A^k}}
		+
		\dfrac{q_0}{2A^k}
	}{
		1-\dfrac{q_1}{2A^k}
	}.
\end{equation}

Once $\xi^{k+1}$ is determined, the variables
$\mathbf v^{k+1}$, $\mathbf u^{k+1}$, $\widetilde{\psi}^{k+1}$, and
$r^{k+1}$ are recovered from the above decompositions. The scalar auxiliary-variable scheme preserves a discrete counterpart of the
Hamiltonian dissipation structure.
\begin{theorem}\label{thm:discrete_modified_hamiltonian_dissipation}
Assume that $A^k>0$ and that the external data entering the loading potential in
$A(\mathbf u,\psi)$ are time-independent; in particular, the prescribed
displacement on $\Gamma_{\delta,u}$ may be nonhomogeneous but is independent of
time, and the body force $\mathbf b$ and prescribed nonlocal traction
$\mathbf t$ are also time-independent.Then the auxiliary-variable update
	\eqref{eq:first_order_auxiliary_scheme}, before the irreversibility projection,
	satisfies
	\begin{equation}\label{eq:discrete_modified_hamiltonian_identity}
		\begin{aligned}
			&
		  \mathcal H^{k+1}
			-
			\mathcal H^k
			+
			\frac{\rho}{2}
			\|\mathbf v^{k+1}-\mathbf v^k\|_{L^2(\Omega\setminus\Gamma_\delta)}^2
			+
			\frac{G_c}{2}
			a_\delta(
			\widetilde{\psi}^{k+1}-\psi^k,
			\widetilde{\psi}^{k+1}-\psi^k
			)
			\\
			&\qquad
			+
			(r^{k+1}-r^k)^2
			+
			\Delta t\,\eta
			\|D_t\widetilde{\psi}^{k+1}\|_{L^2(\Omega)}^2
			=0,
		\end{aligned}
	\end{equation}
where the pre-projection modified Hamiltonian at the new time level is defined by
	\begin{equation}\label{eq:pre_projection_modified_hamiltonian}
		\mathcal H^{k+1}
		=
		\frac{\rho}{2}
		\|\mathbf v^{k+1}\|_{L^2(\Omega\setminus\Gamma_\delta)}^2
		+
		G_c\mathcal A_\delta(
		\widetilde{\psi}^{k+1},
		\mathcal L_\delta\widetilde{\psi}^{k+1}
		)
		+
		(r^{k+1})^2
		-
		C_0.
	\end{equation}
    Consequently,
	\begin{equation}
		\mathcal H^{k+1}
		\le
		\mathcal H^k .
	\end{equation}
\end{theorem}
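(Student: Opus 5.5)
The plan is the standard energy argument for scalar auxiliary variable schemes. I would test each equation of \eqref{eq:first_order_auxiliary_scheme} with a discrete rate variable, apply the polarization identity $2a(a-b)=a^2-b^2+(a-b)^2$ to each quadratic term, and sum so that all terms multiplied by $\xi^{k+1}$ cancel. Throughout, $a_\delta(\cdot,\cdot)$ is read as the symmetric, nonnegative bilinear form whose diagonal is $a_\delta(\psi,\psi)=2\mathcal A_\delta(\psi,\mathcal L_\delta\psi)$. Concretely, $a_\delta(\psi,\varphi)=\frac1\delta(\psi,\varphi)_{L^2}+\frac{\delta}{2}\iint\omega_\delta(\psi(\mathbf x)-\psi(\mathbf y))(\varphi(\mathbf x)-\varphi(\mathbf y))$. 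Symmetry and nonnegativity are immediate from $\omega_\delta\ge0$ and the form of \eqref{eq:nonlocal_crack_functional}.

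\emph{Momentum equation.} Take $\mathbf w=\Delta t\,\mathbf v^{k+1}=\mathbf u^{k+1}-\mathbf u^k$. This test function is admissible, i.e.\ it lies in $\mathcal V_{\omega,\mathbf 0}(\Omega)$, precisely because the prescribed displacement on $\Gamma_{\delta,u}$ is time-independent. Hence $\mathbf u^{k+1}-\mathbf u^k$ vanishes there. Polarization then gives
\[
\tfrac{\rho}{2}\big(\|\mathbf v^{k+1}\|^2-\|\mathbf v^k\|^2+\|\mathbf v^{k+1}-\mathbf v^k\|^2\big)+\xi^{k+1}\langle\mathbf F_u^k,\mathbf u^{k+1}-\mathbf u^k\rangle=0,
\]
with norms taken in $L^2(\Omega\setminus\Gamma_\delta)$.

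\emph{Phase-field equation.} Take $\varphi=\widetilde\psi^{k+1}-\psi^k$. The viscous term becomes $\Delta t\,\eta\|D_t\widetilde\psi^{k+1}\|^2$. By polarization for the bilinear form,
\[
G_c\,a_\delta(\widetilde\psi^{k+1},\widetilde\psi^{k+1}-\psi^k)=G_c\big(\mathcal A_\delta(\widetilde\psi^{k+1})-\mathcal A_\delta(\psi^k)\big)+\tfrac{G_c}{2}a_\delta(\widetilde\psi^{k+1}-\psi^k,\widetilde\psi^{k+1}-\psi^k).
\]
What remains is $\xi^{k+1}\langle F_\psi^k,\widetilde\psi^{k+1}-\psi^k\rangle$.

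\emph{Auxiliary-variable equation.} Multiply by $2\Delta t\,r^{k+1}$ and use $r^{k+1}=\xi^{k+1}\sqrt{A^k}$. This yields
\[
(r^{k+1})^2-(r^k)^2+(r^{k+1}-r^k)^2=\xi^{k+1}\big(\langle\mathbf F_u^k,\mathbf u^{k+1}-\mathbf u^k\rangle+\langle F_\psi^k,\widetilde\psi^{k+1}-\psi^k\rangle\big).
\]
Note that $\mathcal H^k$ is built from the post-projection $\psi^k$, which is exactly what appears in these identities.

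Adding the first two identities and subtracting the third cancels the $\xi^{k+1}$-terms. After grouping, with the constant $-C_0$ appearing in both $\mathcal H^{k+1}$ and $\mathcal H^k$, this gives \eqref{eq:discrete_modified_hamiltonian_identity}. The dissipation $\mathcal H^{k+1}\le\mathcal H^k$ then follows because every remaining term is nonnegative. The terms $(r^{k+1}-r^k)^2$ and $\|\cdot\|^2$ are trivially so, and $a_\delta(\chi,\chi)=2\mathcal A_\delta(\chi)\ge0$ by $\omega_\delta\ge0$. The assumption $A^k>0$ is used only so that $\xi^{k+1}$ and the relation $r^{k+1}=\xi^{k+1}\sqrt{A^k}$ are well defined.

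The algebra is routine. The step needing the most care is justifying the test functions and the structure of $a_\delta$. For the test functions, $\mathbf v^{k+1}$ must be an admissible displacement variation, which is exactly where time-independence of $\mathbf u_g$ enters. Time-independence of $\mathbf b,\mathbf t$ is what allows $\mathbf F_u^k$ to be defined through a fixed $\ell_u$. For $a_\delta$, the notation $a_\delta(\psi,\varphi)=2\mathcal A_\delta$ must be interpreted as the polarized symmetric bilinear form so that the polarization identity applies. I would state this interpretation explicitly at the start of the proof.
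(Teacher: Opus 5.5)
Your proposal is correct and is essentially the paper's proof. The paper tests with $\mathbf v^{k+1}$ and $D_t\widetilde\psi^{k+1}$ and multiplies the $r$-equation by $2r^{k+1}$; this is exactly your computation divided by $\Delta t$, with the same polarization identity and the same cancellation of the $\xi^{k+1}$-terms.

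Your justification that $\mathbf v^{k+1}$ is an admissible test function (via time-independence of $\mathbf u_g$) and your reading of $a_\delta$ as the symmetric bilinear form with $a_\delta(\psi,\psi)=2\mathcal A_\delta$ are useful clarifications that the paper leaves implicit. One small wording point: with your third identity written as $(r^{k+1})^2-(r^k)^2+(r^{k+1}-r^k)^2=\xi^{k+1}(\cdots)$, the coupling terms cancel by substituting it (i.e.\ adding it), not by subtracting it.
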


\begin{proof}
	We use the elementary identity
	\begin{equation}\label{eq:identity_discrete_energy_auxiliary}
		(a-b,a)
		=
		\frac12
		\left(
		\|a\|^2-\|b\|^2+\|a-b\|^2
		\right),
	\end{equation}
	which holds in any inner-product space. Taking $\mathbf w=\mathbf v^{k+1}$ in the discrete momentum equation gives
	\begin{equation}\label{eq:proof_momentum_auxiliary}
		\begin{aligned}
			&
			\frac{\rho}{2\Delta t}
			\left(
			\|\mathbf v^{k+1}\|_{L^2(\Omega\setminus\Gamma_\delta)}^2
			-
			\|\mathbf v^k\|_{L^2(\Omega\setminus\Gamma_\delta)}^2
			+
			\|\mathbf v^{k+1}-\mathbf v^k\|_{L^2(\Omega\setminus\Gamma_\delta)}^2
			\right)
			\\
			&\qquad
			+
			\xi^{k+1}
			\left\langle
			\mathbf F_u^k,\mathbf v^{k+1}
			\right\rangle
			=0 .
		\end{aligned}
	\end{equation}
	
	Taking $\varphi=D_t\widetilde{\psi}^{k+1}$ in the discrete phase-field equation
	gives
	\begin{equation}\label{eq:proof_phase_auxiliary}
		\eta
		\|D_t\widetilde{\psi}^{k+1}\|_{L^2(\Omega)}^2
		+
		G_c\,a_\delta(
		\widetilde{\psi}^{k+1},
		D_t\widetilde{\psi}^{k+1}
		)
		+
		\xi^{k+1}
		\left\langle
		F_\psi^k,
		D_t\widetilde{\psi}^{k+1}
		\right\rangle
		=0 .
	\end{equation}
	Since $\mathcal A_\delta(\psi,\mathcal L_\delta\psi)
	=
	\frac12 a_\delta(\psi,\psi)$, we have
	\begin{equation}\label{eq:proof_crack_surface_auxiliary}
		\begin{aligned}
			G_c\,a_\delta(
			\widetilde{\psi}^{k+1},
			D_t\widetilde{\psi}^{k+1}
			)
			&=
			\frac{G_c}{\Delta t}
			\left[
			\mathcal A_\delta(
			\widetilde{\psi}^{k+1},
			\mathcal L_\delta\widetilde{\psi}^{k+1}
			)
			-
			\mathcal A_\delta(
			\psi^k,
			\mathcal L_\delta\psi^k
			)
			\right]
			\\
			&
			+
			\frac{G_c}{2\Delta t}
			a_\delta(
			\widetilde{\psi}^{k+1}-\psi^k,
			\widetilde{\psi}^{k+1}-\psi^k
			).
		\end{aligned}
	\end{equation}
Next, multiplying the auxiliary-variable equation by $2r^{k+1}$ gives
	\begin{equation}\label{eq:proof_r_auxiliary}
		\begin{aligned}
			&
			\frac{1}{\Delta t}
			\left[
			(r^{k+1})^2-(r^k)^2+(r^{k+1}-r^k)^2
			\right]
			\\
			&\qquad
			=
			\xi^{k+1}
			\left(
			\left\langle
			\mathbf F_u^k,D_t\mathbf u^{k+1}
			\right\rangle
			+
			\left\langle
			F_\psi^k,D_t\widetilde{\psi}^{k+1}
			\right\rangle
			\right).
		\end{aligned}
	\end{equation}
	Using $D_t\mathbf u^{k+1}=\mathbf v^{k+1}$, the coupling terms involving
	$\mathbf F_u^k$ and $F_\psi^k$ cancel exactly when
	\eqref{eq:proof_momentum_auxiliary}, \eqref{eq:proof_phase_auxiliary}, and
	\eqref{eq:proof_r_auxiliary} are combined. Substituting
	\eqref{eq:proof_crack_surface_auxiliary} and multiplying the resulting identity
	by $\Delta t$ gives \eqref{eq:discrete_modified_hamiltonian_identity}.
	Since all additional terms in \eqref{eq:discrete_modified_hamiltonian_identity}
	are nonnegative, it follows that
	$\mathcal H^{k+1}\le\mathcal H^k$.
	
\end{proof}

\begin{remark}
	The identity in Theorem~\ref{thm:discrete_modified_hamiltonian_dissipation}
	is established for the pre-projection variable
	$\widetilde{\psi}^{k+1}$. Therefore, the equality
	\eqref{eq:discrete_modified_hamiltonian_identity} should be understood as the
	structure-preserving property of the scalar auxiliary-variable update before the
	irreversibility projection.
\end{remark}

\subsection{Staggered alternating time discretization}\label{subsec:spa_temp}
\noindent We use a staggered alternating iteration at each time step $t_{k+1}$. Instead of a monolithic update, the displacement and phase-field subproblems are solved sequentially.

For the momentum equation, we adopt the Newmark-$\beta$ time discretization~\cite{belytschko2014nonlinear}. 
Given $(\mathbf u^{k},\mathbf v^{k},\mathbf a^{k})$ and the current phase-field iterate
$\psi^{k+1,m}$, the displacement subproblem is formulated as follows

Find $\mathbf u^{k+1,m+1}\in\mathcal V_{\omega,\mathbf u_g}(\Omega)$ such that, for all $\mathbf w\in\mathcal V_{\omega,\mathbf0}(\Omega)$,
\begin{equation}\label{eq:disp_var_problem}
	\rho\big(\mathbf{a}^{k+1,m+1},\mathbf{w}\big)_{L^2(\Omega\setminus\Gamma_\delta)}
	+
	\Big(
	\boldsymbol{\sigma}_{\delta}^{\mathrm d}
	(\mathbf{u}^{k+1,m+1},\psi^{k+1,m}),
	\boldsymbol{\varepsilon}_{\delta}(\mathbf{w})
	\Big)_{L^2(\Omega)}
	=
	\ell_u^{k+1}(\mathbf{w}),
\end{equation}
where $\ell_u^{k+1}(\mathbf{w})=(\mathbf{b}^{k+1},\mathbf{w})_{L^2(\Omega\setminus\Gamma_\delta)}
+(\mathbf{t}^{k+1},\mathbf{w})_{\Gamma_{\delta,t}}$ and the function space $\mathcal{V}_{\omega,\mathbf0}(\Omega)$ is defined by
\begin{equation}
	\mathcal{V}_{\omega,\mathbf0}(\Omega)=\{\mathbf{u}(\cdot,t)\in\mathcal{V}_{\omega}(\Omega):\mathbf{u}(\cdot,t) = \mathbf{0}\, \text{on}\,  \Gamma_{\delta,u}\}.
\end{equation}
The acceleration and velocity are updated by
\begin{equation}\label{eq:modelII_stagger_kinematics}
	\left\{
	\begin{aligned}
		\mathbf{u}^{k+1,m+1}
		&=
		\mathbf{u}^{k}
		+
		\Delta t\,\mathbf{v}^{k}
		+
		\Delta t^2
		\left[
		\left(\frac12-\beta\right)\mathbf{a}^{k}
		+
		\beta\mathbf{a}^{k+1,m+1}
		\right],\\
		\mathbf{v}^{k+1,m+1}
		&=
		\mathbf{v}^{k}
		+
		\Delta t
		\left[
		(1-\gamma)\mathbf{a}^{k}
		+
		\gamma\mathbf{a}^{k+1,m+1}
		\right], 
	\end{aligned}
	\right.
\end{equation}
and the Newmark parameters $(\gamma,\beta)$ are chosen such that
$\gamma\ge1/2$ and $\beta\ge\frac{1}{4}(\gamma+1/2)^2$.

After updating the displacement field, the driving state function is evaluated
pointwise as
\begin{equation}\label{eq:modelII_stagger_history}
	D^{k+1,m+1}(\mathbf{x})
	=
	\begin{cases}
		\displaystyle
		\frac{2\mathcal{W}_{\delta,e}^{+}
			(\mathbf{u}^{k+1,m+1}(\mathbf{x}))}{G_c/\delta},
		& \bar{\eta}>0,\\[1.0em]
		\displaystyle
		\max\!\left\{
		D^{k}(\mathbf{x}),
		\frac{2\mathcal{W}_{\delta,e}^{+}
			(\mathbf{u}^{k+1,m+1}(\mathbf{x}))}{G_c/\delta}
		\right\},
		& \bar{\eta}=0.
	\end{cases}
\end{equation}

The phase-field subproblem is then discretized by a backward Euler scheme:
find $\psi^{k+1,m+1}\in\mathcal{M}_\omega(\Omega)$ such that
\begin{equation}\label{eq:psi_var_problem}
	\begin{aligned}
		&
		\bar{\eta}
		\big(D_t\psi^{k+1,m+1},\varphi\big)_{L^2(\Omega)}
		+
		\big(\psi^{k+1,m+1},\varphi\big)_{L^2(\Omega)}
		+
		\frac{\delta^2}{2}
		\big[\psi^{k+1,m+1},\varphi\big]_{\omega_\delta(\Omega)}
		\\
		&
		+
		\big(D^{k+1,m+1}\psi^{k+1,m+1},\varphi\big)_{L^2(\Omega)}
		=
		\big(D^{k+1,m+1},\varphi\big)_{L^2(\Omega)},
		\quad
		\forall\varphi\in\mathcal{M}_\omega(\Omega),
	\end{aligned}
\end{equation}
where $	D_t\psi^{k+1,m+1}
=
\frac{\psi^{k+1,m+1}-\psi^k}{\Delta t}$.

We monitor the alternating iteration by
\begin{equation}\label{eq:modelII_stagger_stop}
R^{m+1}:=
\frac{\|\mathbf{u}^{k+1,m+1}-\mathbf{u}^{k+1,m}\|_{L^2(\Omega)}}{\|\mathbf{u}^{k+1,m+1}\|_{L^2(\Omega)}}
+\frac{\|\psi^{k+1,m+1}-\psi^{k+1,m}\|_{L^2(\Omega)}}{\|\psi^{k+1,m+1}\|_{L^2(\Omega)}},
\end{equation}
and stop when $R^{m+1}\le\varepsilon_{\mathrm{tol}}$. The full alternating procedure is summarized in Algorithm~\ref{alg:modelII_staggered}.

\begin{algorithm}[htbp]
\caption{Staggered alternating solve at time step $t_{k+1}$}
\label{alg:modelII_staggered}
\KwIn{$(\mathbf{u}^{k},\mathbf{v}^{k},\mathbf{a}^{k},\psi^{k},D^{k})$, $\Delta t$, Newmark parameters $(\beta,\gamma)$, tolerance $\varepsilon_{\mathrm{tol}}$}
\KwOut{$(\mathbf{u}^{k+1},\mathbf{v}^{k+1},\mathbf{a}^{k+1},\psi^{k+1},D^{k+1})$}
Initialize $\mathbf{u}^{k+1,0}=\mathbf{u}^{k}$, $\mathbf{v}^{k+1,0}=\mathbf{v}^{k}$, $\psi^{k+1,0}=\psi^{k}$, $D^{k+1,0}=D^{k}$, $m=0$\;
Set $R^{m+1}=+\infty$\;
\While{$R^{m+1}>\varepsilon_{\mathrm{tol}}$}{
Solve the displacement variational subproblem \eqref{eq:disp_var_problem} with fixed $\psi^{k+1,m}$  by Newton-Raphson method\;
Update the driving state function by \eqref{eq:modelII_stagger_history}\;
Solve the phase-field variational subproblem \eqref{eq:psi_var_problem}\;
Evaluate $R^{m+1}$ using \eqref{eq:modelII_stagger_stop}\;
$m\leftarrow m+1$\;
}
Set converged iterate as $(\mathbf{u}^{k+1},\mathbf{v}^{k+1},\mathbf{a}^{k+1},\psi^{k+1},D^{k+1})$\;
\end{algorithm}

\subsection{Finite element method for spatial discretization}
\label{subsec:spa_dis}
\noindent We partition $\Omega$ into a shape-regular, quasi-uniform quadrilateral mesh 
$\mathcal{T}_h$. The mesh is assumed to resolve the nonlocal boundary layer 
$\Gamma_\delta$. We denote by $h$ the maximum diameter of all elements 
$T\in\mathcal{T}_h$. The element subsets associated with the interior domain and 
the nonlocal boundary layers are defined as
\begin{equation}
	\begin{aligned}
		\mathcal{T}_{h,int}
		&=\{T\in\mathcal{T}_h:\, T\subset \Omega\setminus\Gamma_\delta\},\\
		\mathcal{T}_{h,t}
		&=\{T\in\mathcal{T}_h:\, T\subset  \Gamma_{\delta,t}\},\\
		\mathcal{T}_{h,u}
		&=\{T\in\mathcal{T}_h:\, T\subset \Gamma_{\delta,u}\}.
	\end{aligned}
\end{equation}
Further, we introduce 
\begin{equation}
	\begin{aligned}
		S_h&=\left\{\mathbf{u}_h\in C^0(\overline{\Omega};\mathbb{R}^d)\, \big|\,
		\mathbf{u}_h|_T\in[Q_1(T)]^d,\ \forall T\in \mathcal{T}_h\ \text{and}\
		\mathbf{u}_h|_{T'}=\mathbf{u}_{g,h},\ \forall T'\in\mathcal{T}_{h,u}\right\},\\
		S_h^0&=\left\{\mathbf{u}_h\in C^0(\overline{\Omega};\mathbb{R}^d)\, \big|\,
		\mathbf{u}_h|_T\in[Q_1(T)]^d,\ \forall T\in \mathcal{T}_h\ \text{and}\
		\mathbf{u}_h|_{T'}=\mathbf{0},\ \forall T'\in\mathcal{T}_{h,u}\right\},\\
		\widetilde{S}_h&=\left\{w_h\in C^0(\overline{\Omega};\mathbb{R})\, \big|\,
		w_h|_T\in Q_1(T),\ \forall T\in \mathcal{T}_h\right\},
	\end{aligned}
\end{equation}
where $\mathbf{u}_{g,h}$ is the finite element interpolation of the prescribed boundary displacement $\mathbf{u}_{g}$ on $\Gamma_{\delta,u}^h$.

The fully discrete Galerkin scheme is obtained from \eqref{eq:disp_var_problem} and \eqref{eq:psi_var_problem} by replacing $\mathcal V_{\omega,\mathbf u_g}(\Omega), \mathcal V_{\omega,\mathbf 0}(\Omega), \mathcal M_\omega(\Omega) $ with $ S_h, S_h^0, \widetilde S_h$, respectively. The discrete driving state $D_h^{k+1,m+1}$ is evaluated from \eqref{eq:modelII_stagger_history} using the finite element displacement $\mathbf u_h^{k+1,m+1}$. The resulting nonlinear displacement system is solved by Newton iterations, while the phase-field subproblem leads to a linear system at each staggered iteration. The finite element implementation of the nonlocal operators $\mathcal L_\delta$, $\mathcal G_\delta$, and $\mathcal D_\delta$ follows the quadrature-based treatment of nonlocal interactions described in \cite{d2021cookbook}.

A key additional ingredient in spatial discretization of nonlocal model is the mesh-dependent approximation of the interaction domain $B_\delta(\mathbf{x})$. At the continuous level, the nonlocal operators involve integrals over the interaction ball
$B_\delta(\mathbf{x})$. On a mesh $\mathcal{T}_h$, this ball is not represented exactly and must be approximated by a
discrete interaction neighborhood assembled from mesh entities.
For a given quadrature point $\mathbf{x}_{T,q}\in T\in \mathcal{T}_h$,
we define its discrete nonlocal interaction domain set by
\begin{equation}
	\label{eq:discrete_ball}
	B_{\delta,h}(\mathbf{x}_{T,q})=\big\{\, T'\in\mathcal{T}_h \; \big| \;
	|\mathbf{x}_{T',c}-\mathbf{x}_{T,c}|<\delta \,\big\},
\end{equation}
where $\mathbf{x}_{T,c}$ is the center of element $T\in\mathcal{T}_h$. We approximate the continuous interaction integral by a quadrature-based sum over
$B_{\delta,h}(\mathbf{x}_{T,q})$. A schematic illustration of the approximation of $B_\delta(\mathbf{x})$ on a finite element mesh is
shown in Figure~\ref{fig:ball_approx}.

\begin{figure}[htbp]
	\centering
	\begin{tikzpicture}
\node[anchor=south west, inner sep=0] (img) 
    at (0,0) {\includegraphics[width=0.6\textwidth]{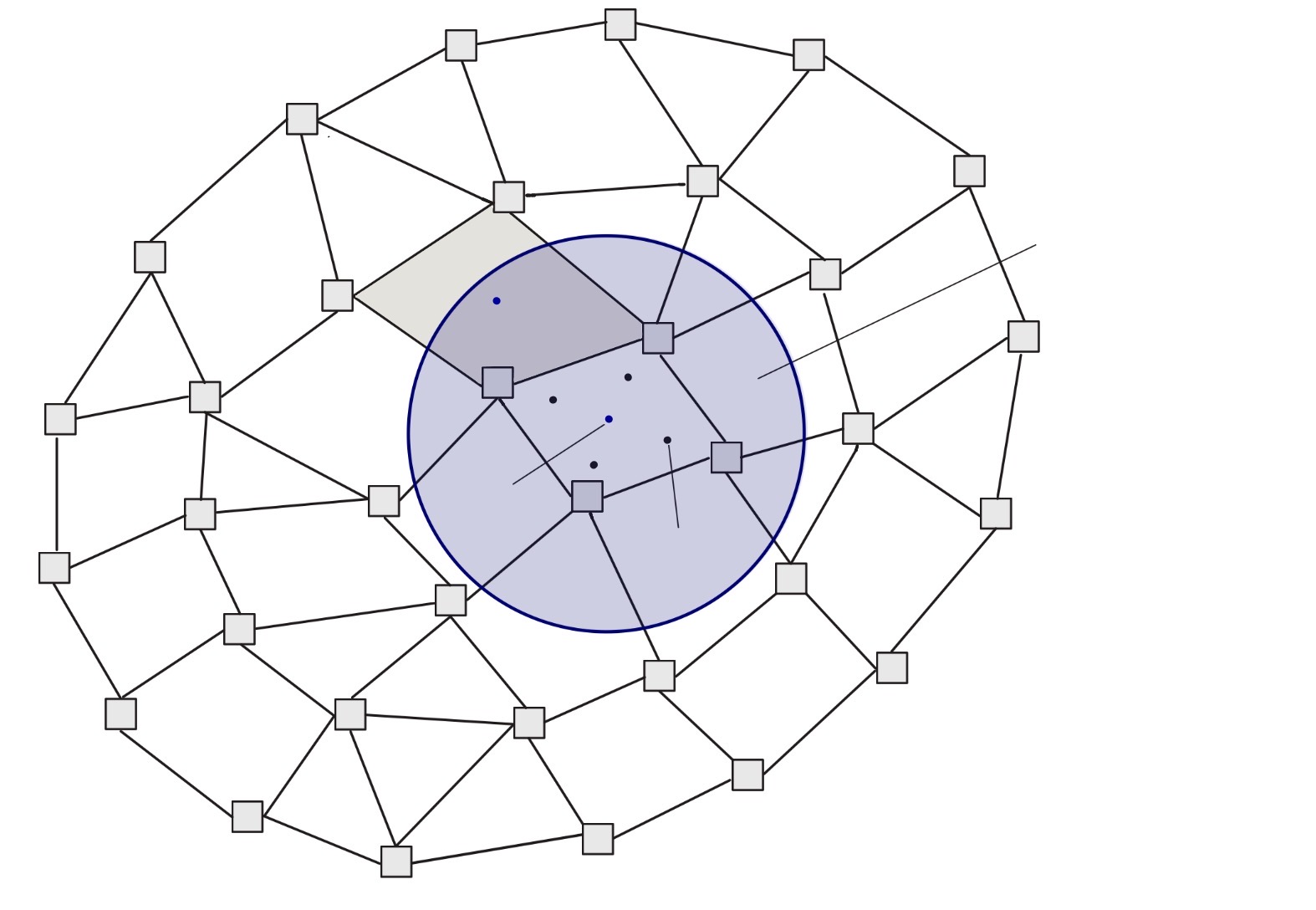}};
\begin{scope}[x={(img.south east)}, y={(img.north west)}]
\node at (0.42,0.63) {$\mathbf{x}_{T',c}$};
\node at (0.89,0.74) {$B_{\delta,h}(\mathbf{x}_{T,q})$};
\node at (0.52,0.38) {$\mathbf{x}_{T,q}$};
\node at (0.37,0.45) {$\mathbf{x}_{T,c}$};
\end{scope}
\end{tikzpicture}
\caption{ Schematic illustration of the mesh-dependent approximation of the interaction domain. For a quadrature point $\mathbf{x}_{T,q}$ in element $T$, the continuous ball $B_\delta(\mathbf{x}_{T,q})$ is approximated by the discrete neighborhood $B_{\delta,h}(\mathbf{x}_{T,q})$, which consists of elements $T'\in\mathcal{T}_h$ selected according to the distance between element centers. The corresponding nonlocal integral is then evaluated by applying quadrature over these selected interaction elements. }
	\label{fig:ball_approx}
\end{figure}

\section{Numerical examples}\label{sec:numerical_eg}
\noindent This section presents four numerical examples to assess the proposed nonlocal phase-field framework for dynamic fracture. The first example considers Mode-I fracture in a single-edge notched plate. The second example examines dynamic crack branching in a pre-notched plate, a widely used benchmark for assessing crack-path instability and branching behavior. The third example studies crack evolution under dynamic shear loading. The fourth example investigates fragmentation under impulsive internal pressure. All four examples are considered in two-dimensional settings. Example 1 is used to examine the structure-preserving SAV scheme and Examples 2, 3, and 4 are solved using the staggered alternating scheme. In all simulations, the displacement subproblem employs the Gaussian-type kernel defined in Eq.~\eqref{eq:kernel} with $\alpha=(d+1)^2$, while the phase-field subproblem uses the kernel $2\omega_\delta$. For the staggered alternating simulations, the displacement subproblem is integrated in time by the Newmark--$\beta$ method with $\gamma=0.5$ and $\beta=0.25$, corresponding to the classical average-acceleration scheme. The tolerance $\varepsilon_{\mathrm{tol}}$ for the staggered alternating iteration is reported in the parameter table of each relevant example and is chosen to balance accuracy and computational efficiency.

\subsection{Mode-I fracture of a single-edge notched plate}\label{eg: mode-I}
\noindent Consider a pre-notched rectangular plate in $\mathbb{R}^{2}$ subjected to vertical displacement applied to the top and bottom nonlocal boundary layers, as illustrated in Figure~\ref{fig:mode_I_fracture}. The plate occupies the domain $\Omega=[0,0.04]\text{m}\times[0,0.04]\text{m}$. An initial notch is introduced from the left edge along the midline of the plate. The plate is initially at rest, and the prescribed displacement first increases linearly and then remains constant according to
\begin{equation}
\begin{aligned}
u_{g,2}(x_1,x_2,t)&=
\begin{cases}
\bar{u}(t), & x_1\in[0,0.04],\ x_2\in[0.04-\delta,0.04],\\[0.4em]
-\bar{u}(t), & x_1\in[0,0.04],\ x_2\in[0,\delta],
\end{cases}\\
\bar{u}(t)&=
\begin{cases}
\dfrac{\Delta u}{t_0}\,t, & 0\le t\le t_0,\\[0.4em]
\Delta u, & t>t_0,
\end{cases}
\label{eq:mode_I_prescribed_displacement}
\end{aligned}
\end{equation}
where $t_0 = 20\,\mu\mathrm{s}$ and $\Delta u = 0.0015\,\mathrm{mm}$. The physical and numerical parameters used in the simulations are given in Table~\ref{tab:material_parameters_1}, and the plane-stress condition is assumed in this numerical example.

\begin{figure}[htbp]
	\centering
	\includegraphics[width=0.75\textwidth]{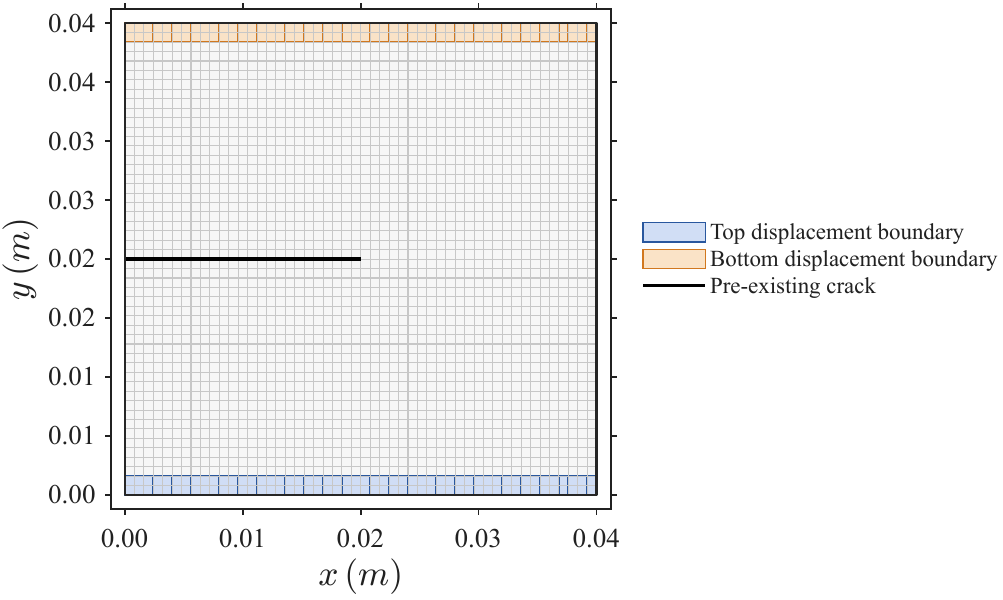}
	\caption{Schematic of the Mode-I fracture test. The mesh shown is for illustration only.}
	\label{fig:mode_I_fracture}
\end{figure}

Figure~\ref{fig:mode_I_coarse_mesh} shows the distributions of the phase-field variable $\psi(\mathbf{x},t)$ at two representative time steps. The results indicate that a single crack initiates from the notch tip and subsequently propagates approximately straight toward the opposite boundary. The corresponding distributions of the in-plane nonlocal hydrostatic stress,
$\sigma_{n,\mathrm{hyd}}=\frac{1}{2}\mathrm{tr}\bm{\sigma}_{n}$,
are presented in Figure~\ref{fig:mode_I_stress}. The temporal evolutions of the kinetic energy, nonlocal degraded elastic energy, nonlocal dissipated fracture energy, and modified Hamiltonian are shown in Figure~\ref{fig:energy_evolution_mode_I}. Before crack initiation, which occurs at approximately $t=20\,\mu\mathrm{s}$, the energy curves obtained on different meshes remain in close agreement, indicating that the initial elastic response is consistently captured. After crack initiation, visible differences appear among the three meshes. This behavior is not unexpected, since the post-initiation stage involves strong localization, rapid stress release, wave propagation, and crack-path evolution, all of which are sensitive to the spatial resolution of the diffusive crack zone and the nonlocal interaction neighborhood. Nevertheless, the qualitative energy-transfer mechanism remains consistent across the meshes: the degraded elastic energy decreases as the dissipated fracture energy increases, while the kinetic energy exhibits oscillations associated with dynamic wave effects. The modified Hamiltonian is plotted as a diagnostic quantity for the SAV update; in the presence of prescribed displacement loading and the subsequent irreversibility projection, it should not be interpreted as a strictly monotone quantity over the entire loading process.

\begin{table}[htbp]
	\centering
	\begin{threeparttable}
		\caption{Physical and numerical parameters used in the simulations of Mode-I fracture of a single-edge notched plate.}
		\label{tab:material_parameters_1}
		\begin{tabular}{l c c c}
			\toprule
			Physical parameters & Symbol & Unit & Value \\
			\midrule
			Young's modulus              & $E$      & GPa        & 32 \\   
			Poisson's ratio              & $\nu$    & --         & 0.2 \\ 
			Density                      & $\rho$   & kg/m$^{3}$ & $2.45\times10^{3}$ \\ 
			Critical energy release rate & $G_c$    & J/m$^2$    & 2 \\ 
			Viscous resistance of crack  & $\bar{\eta}$ & s      & $1.00\times10^{-6}$ \\
			\midrule
			Numerical parameters &  &  &  \\
			\midrule
		    Mesh resolution              & $n_x=n_y$ & -- & $100$, $200$, $400$ \\
            Time step                    & $\Delta t$ & $\mu$s & $0.15$, $0.075$, $0.0375$ \\
			Nonlocal interaction length scale & $\delta$ & mm & $2.37h$ \\
			SAV energy shift             & $C_0$      & J      & $8.00\times10^{-2}$ \\
			Alternating iteration tolerance & $\varepsilon_{\mathrm{tol}}$ & -- & $1.00\times10^{-3}$ \\
			\bottomrule
		\end{tabular}
		\begin{tablenotes}[flushleft]
			\footnotesize
			\item Note:
			$C_0=\max\{E_{\mathrm{el}}^{\ast},E_{\mathrm{frac}}^{\ast}\}$, where
			$E_{\mathrm{el}}^{\ast}=0.5\,E(\Delta u/L_y)^2|\Omega|$ and
			$E_{\mathrm{frac}}^{\ast}=G_cL_x$ under the unit-thickness assumption.
			Here $\Delta u=1.5\times10^{-6}\,\mathrm{m}$.
		\end{tablenotes}
	\end{threeparttable}
\end{table}

\begin{figure}[htbp]
	\makebox[\textwidth][c]{  
\begin{minipage}{1.2\textwidth}
\centering
	\subfigure[]{
		\includegraphics[width=0.475\textwidth]{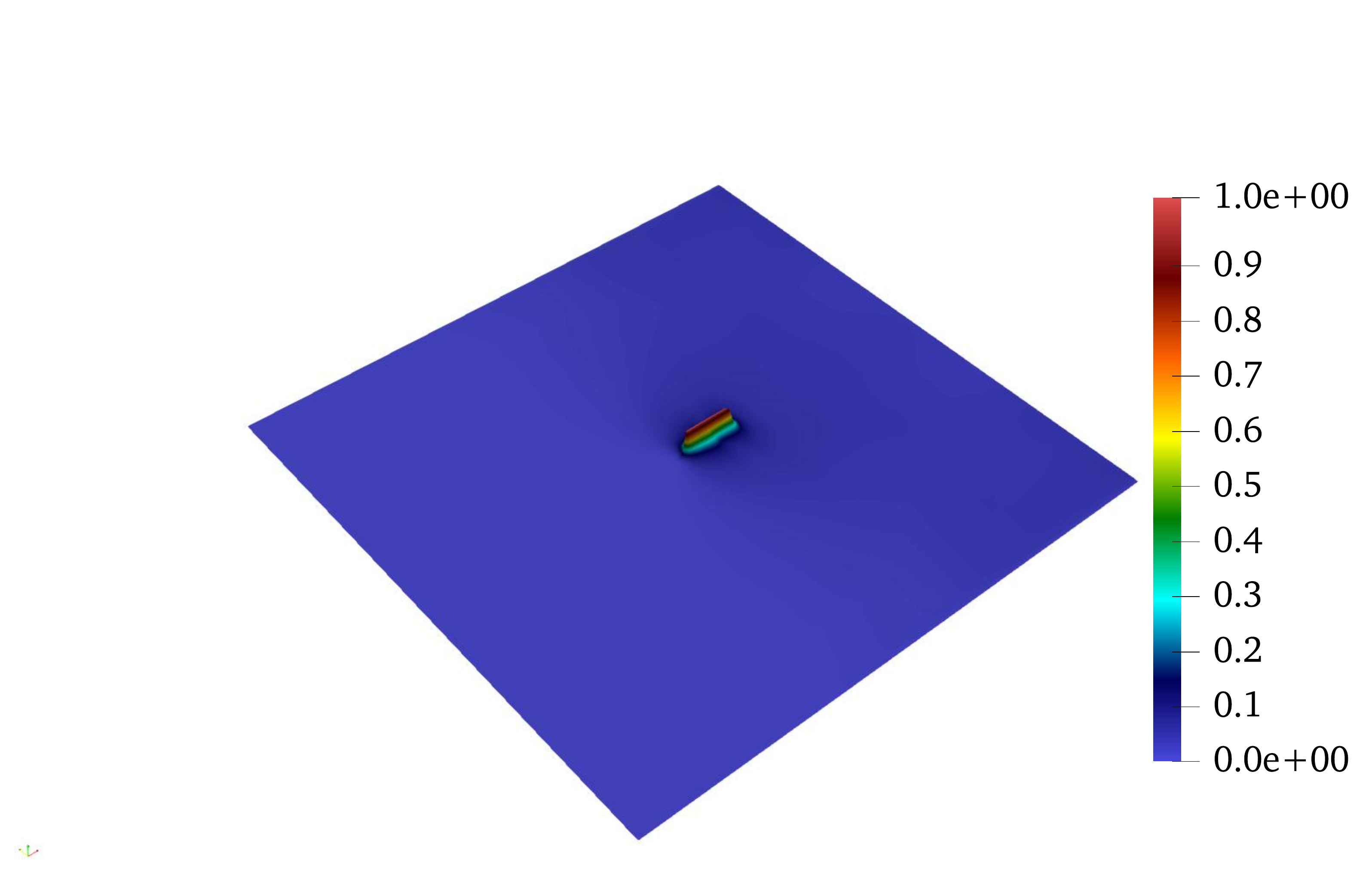}
		\label{fig:mode_I_coarse_250}
    }
		\subfigure[]{
		\includegraphics[width=0.475\textwidth]{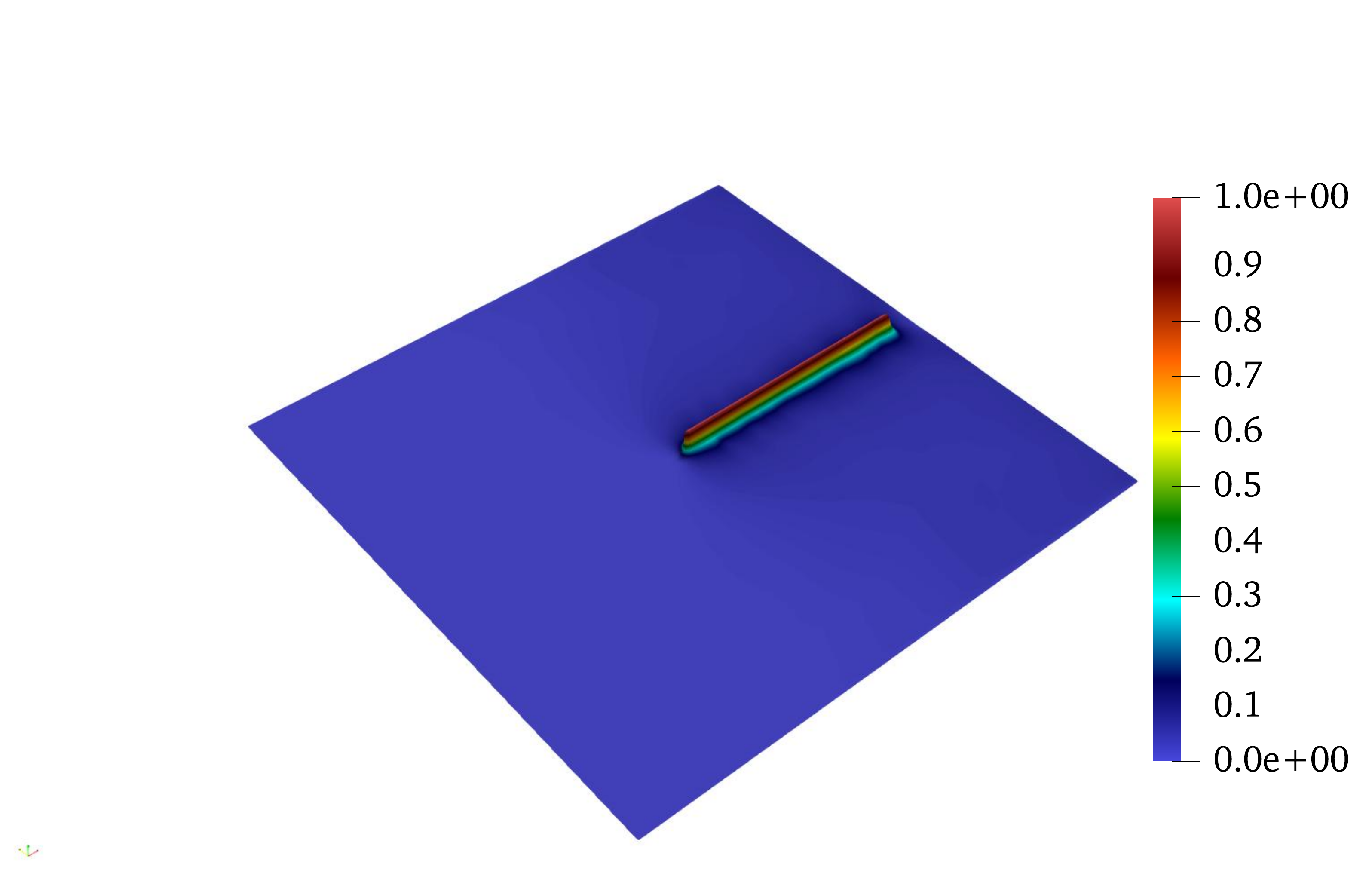}
		\label{fig:mode_I_coarse_1000}
	}
	\end{minipage}
}
\caption{Snapshots of the phase-field $\psi(\mathbf{x},t)$ distribution obtained using the SAV scheme at two representative times: (a) $t=37.5\,\mu\mathrm{s}$ and (b) $t=87.2\,\mu\mathrm{s}$.}
	\label{fig:mode_I_coarse_mesh}
\end{figure}

\begin{figure}[htbp]
		\makebox[\textwidth][c]{  
		\begin{minipage}{1.2\textwidth}
\centering
	\subfigure[]{
		\includegraphics[width=0.475\textwidth]{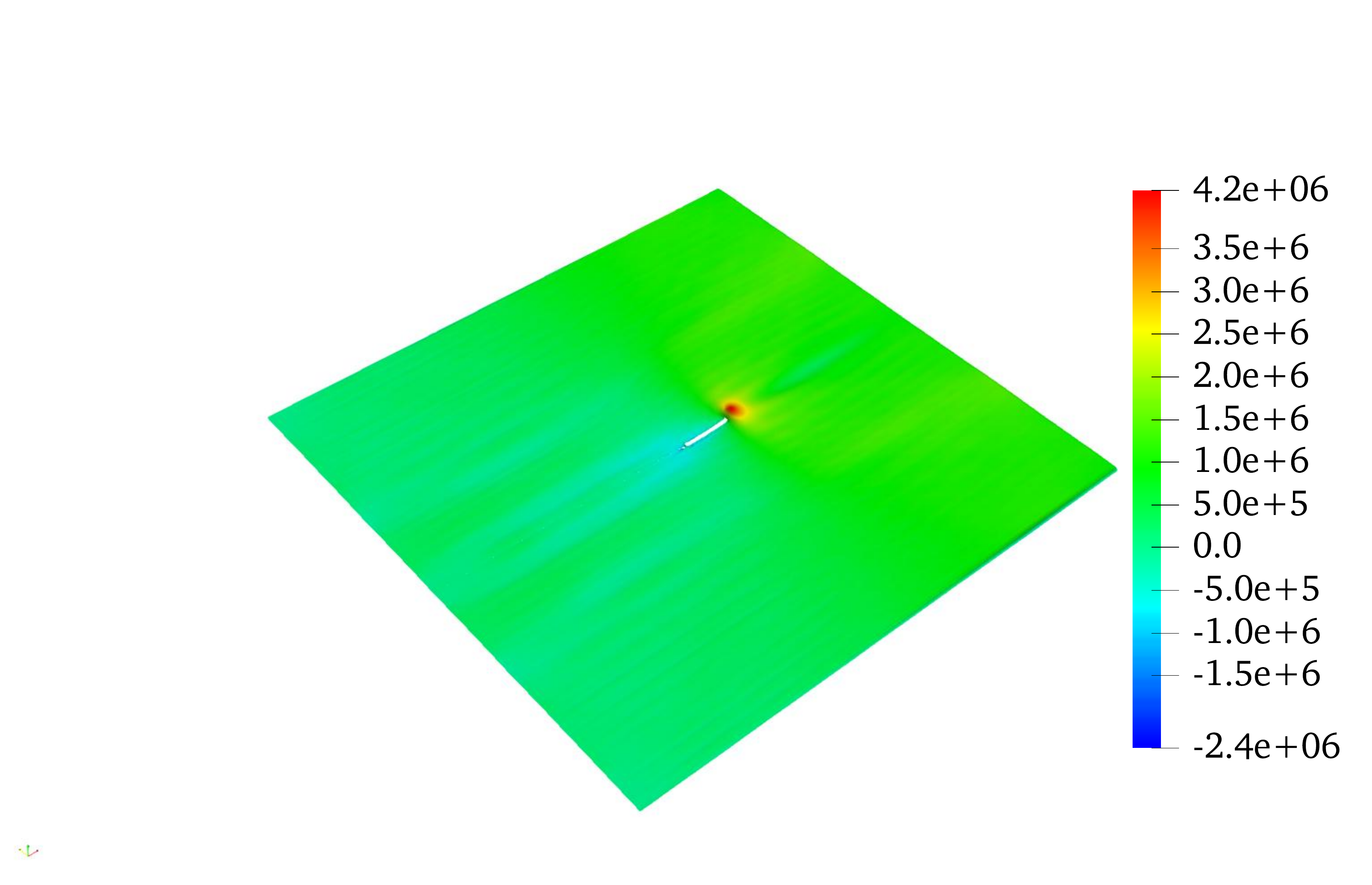}
		\label{fig:mode_I_coarse_stress_250}
    }
		\subfigure[]{
		\includegraphics[width=0.475\textwidth]{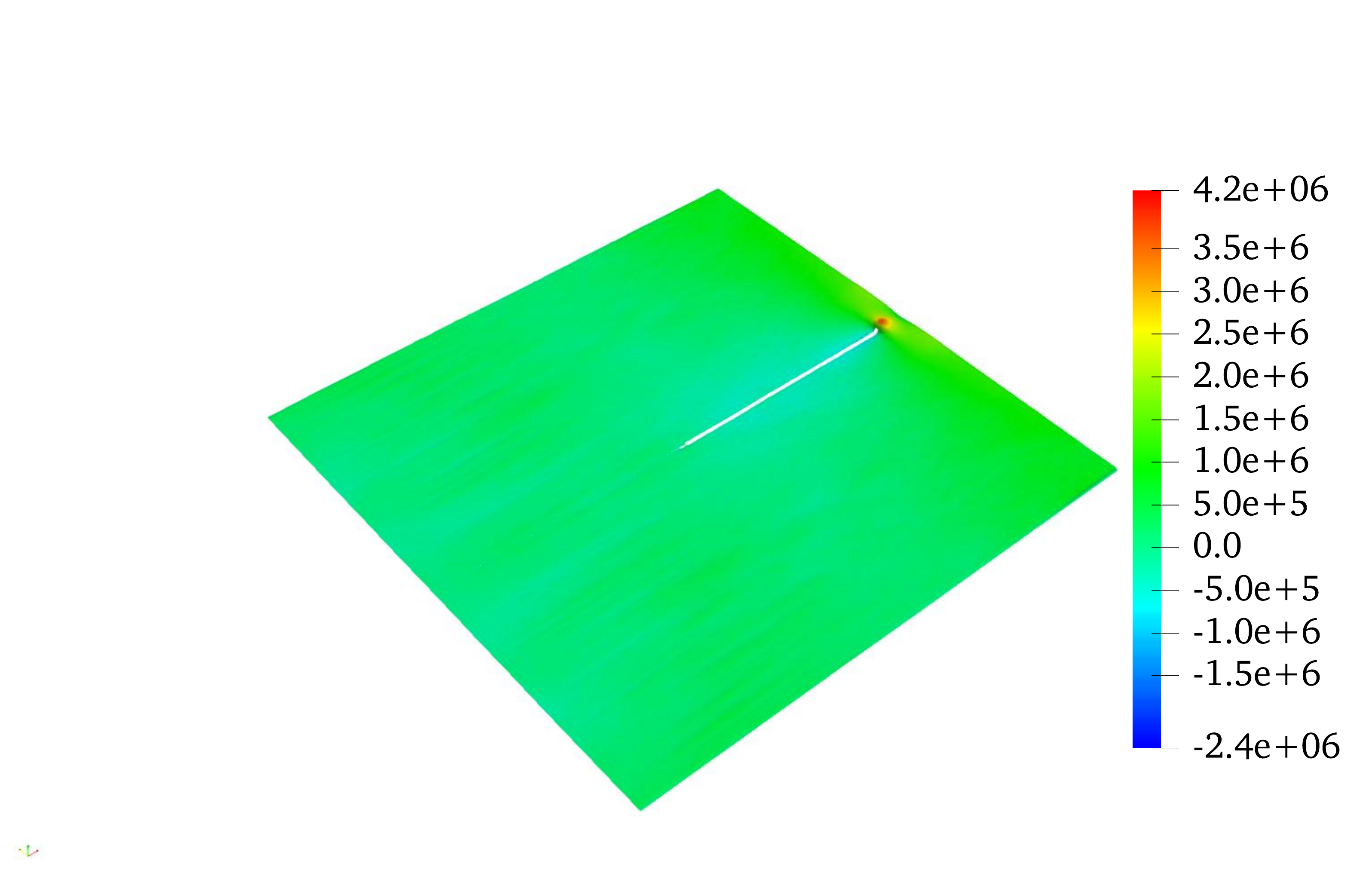}
		\label{fig:mode_I_coarse_stress_1000}
	}
		\end{minipage}
}
\caption{Snapshots of the nonlocal hydrostatic stress $\sigma_{n,\mathrm{hyd}}(\mathbf{x},t)$ distribution obtained using the SAV scheme at two representative times: (a) $t=37.5\,\mu\mathrm{s}$ and (b) $t=87.2\,\mu\mathrm{s}$. The displacement field is magnified by a factor of $500$, and regions with $\psi>0.95$ are omitted for clarity.}
	\label{fig:mode_I_stress}
\end{figure}

\begin{figure}[htbp]
	\centering
		\subfigure[]{
		\includegraphics[width=0.475\textwidth]{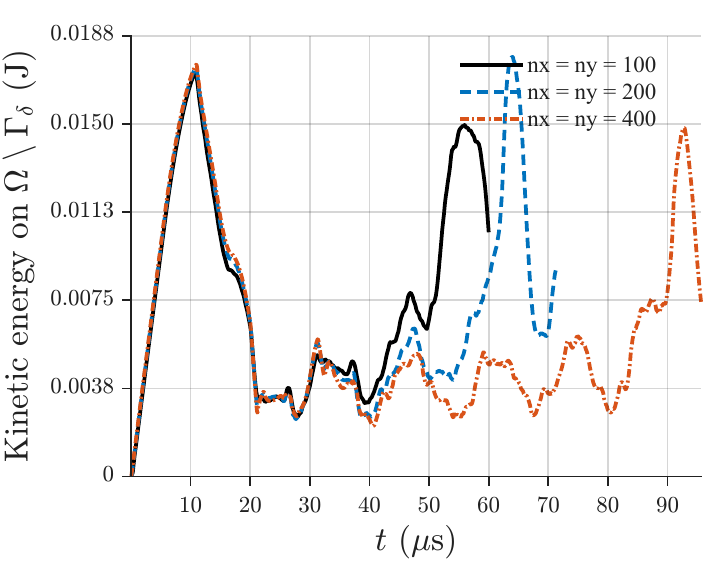}
		\label{fig:kinetic_energy_mode_I}
	}
	\subfigure[]{
		\includegraphics[width=0.475\textwidth]{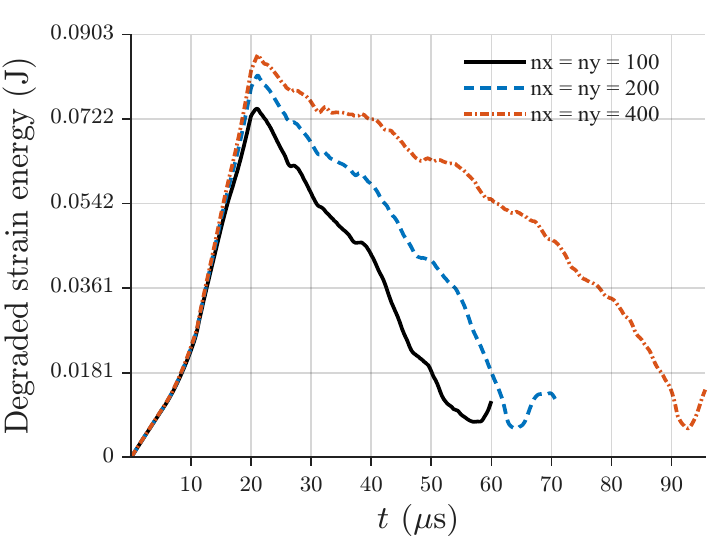}
		\label{fig:energy_elastic_mode_I}
	}
	\subfigure[]{
		\includegraphics[width=0.475\textwidth]{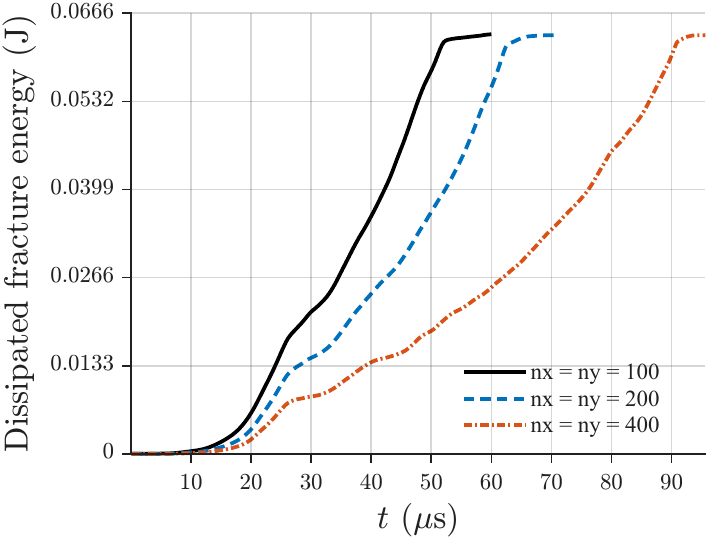}
		\label{fig:energy_fracture_mode_I}
	}
	\subfigure[]{
	\includegraphics[width=0.475\textwidth]{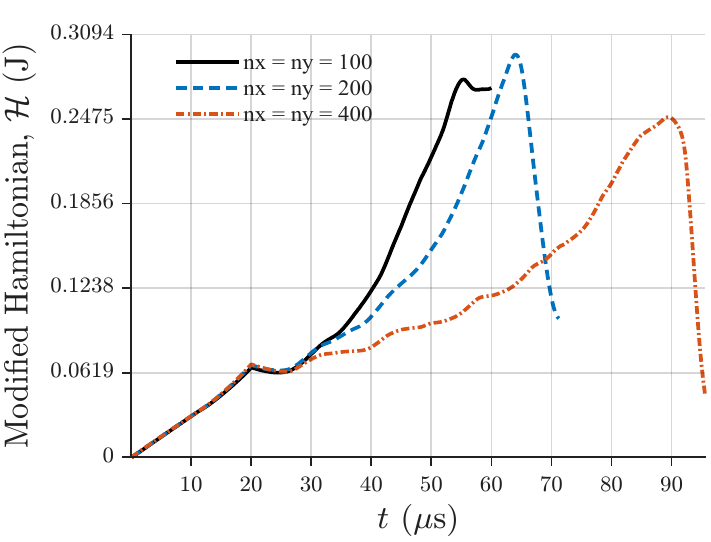}
	\label{fig:total_energy_mode_I}
}
\caption{Time evolution of the energy components in the SAV simulation:
	(a) kinetic energy $\frac{\rho}{2}\|\dot{\mathbf u}\|_{L^2(\Omega}^2$,
	(b) nonlocal degraded elastic energy $\int_{\Omega}\mathcal W_{\delta,de}(\mathbf u,\psi)\,\mathrm d v$,
	(c) nonlocal dissipated fracture energy $G_c\mathcal A_\delta(\psi,\mathcal L_\delta\psi)$, and
	(d) modified Hamiltonian
	$\mathcal H=\frac{\rho}{2}\|\dot{\mathbf u}\|_{L^2(\Omega\setminus\Gamma_\delta)}^2
	+G_c\mathcal A_\delta(\psi,\mathcal L_\delta\psi)+r^2-C_0$.}
	\label{fig:energy_evolution_mode_I}
\end{figure}

\subsection{Dynamic crack branching}
\noindent Consider a pre-notched rectangular plate in $\mathbb{R}^{2}$ subjected to dynamic tensile loading, as illustrated in Figure~\ref{fig:dynamic_branching}. The geometry and boundary conditions follow the standard dynamic crack branching benchmark configuration\cite{xu1994numerical,belytschko2003dynamic,song2008comparative}. The plate occupies the domain $\Omega=[0,0.1]\text{m}\times[0,0.04]\text{m}$ and a traction $\bm{\sigma}=1\text{MPa}$ is applied to the top and bottom boundaries at the initial time and then maintained constant, while all remaining boundaries are traction-free. In the numerical simulation, the traction $\bm{\sigma}$ on the local top and bottom boundaries is transformed into the nonlocal traction using the following local-nonlocal traction equivalence condition given by 
\begin{equation}\label{eq:condition}
	\int_{\Gamma_t}\mathbf{t}_{local}\ \mathrm{d}a=\int_{\Gamma_t}\bm{\sigma}\hat{\mathbf{n}}\ \mathrm{d}a= \int_{\Gamma_{\delta,t}}\mathbf{t}\ \mathrm{d}v,
\end{equation}
where $\Gamma_t\subset\partial\Omega$ is the local traction boundary, $\mathbf{t}_{local}$ is the local traction and $\bm{\sigma}$ denotes the Cauchy stress.

\begin{figure}[htbp]
	\centering
	\includegraphics[width=0.8\textwidth]{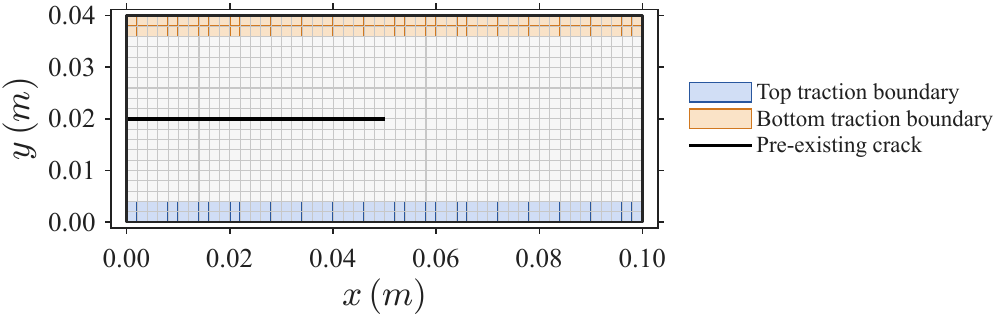}
	\caption{Schematic of the dynamic branching test. The mesh shown is for illustration only.}
	\label{fig:dynamic_branching}
\end{figure}

The applied loading is sufficiently large to trigger dynamic crack branching during the evolution process. The initial crack is explicitly prescribed as a geometrical discontinuity in the domain, representing a pre-existing sharp crack from which subsequent crack propagation and branching evolve. The material parameters used in this example are summarized in Table~\ref{tab:material_parameters_2} and the plane-stress condition are assumed in this numerical example.

\begin{table}[htbp]
	\centering
	\begin{threeparttable}
		\caption{Physical and numerical parameters used in the simulations of dynamic crack branching.}
		\label{tab:material_parameters_2}
		\begin{tabular}{l c c c}
			\toprule
			Physical parameters & Symbol & Unit & Value \\
			\midrule
			Young's modulus              & $E$      & GPa        & 32 \\   
			Poisson's ratio              & $\nu$    & --         & 0.2 \\ 
			Density                      & $\rho$  & kg/m$^{3}$ & $2.45\times10^{3}$ \\ 
			Critical energy release rate & $G_c$   & J/m$^2$    & 3 \\ 
			Viscous resistance of crack  & $\bar{\eta}$ & s & $0.0$ \\
			Rayleigh wave speed          & $c_R$   & m/s        & $2.13\times10^{3}$ \\ 
			\midrule
			Numerical parameters &  &  &  \\
			\midrule  
			Mesh size & $h$ & mm & $0.167$, $0.1$ \\
			Time step & $\Delta t$ & s & $0.75\,h\times10^{-3}/c_d$ \\  
			Nonlocal interaction length scale & $\delta$ & mm & $2h$ \\  
			Alternating iteration tolerance & $\varepsilon_{\mathrm{tol}}$ & -- & $1.00\times10^{-6}$ \\
			\bottomrule
		\end{tabular}
	\end{threeparttable}
\end{table}

Figure~\ref{fig:dy_energy} shows the temporal evolutions of kinetic energy, nonlocal degraded strain energy, nonlocal dissipated fracture energy, and total mechanical-fracture energy. In the early loading stage, the degraded strain energy gradually accumulates, while the kinetic energy remains relatively small. Once crack propagation starts, the stored elastic energy is released and converted into kinetic energy and dissipated fracture energy. As a result, the kinetic energy increases rapidly at later times, and the dissipated fracture energy grows monotonically, reflecting the irreversible nature of crack evolution.

The coarse-mesh ($h=0.167\,\mathrm{mm}$) and fine-mesh ($h=0.1\,\mathrm{mm}$) results exhibit consistent overall trends. Minor deviations appear mainly in the degraded strain energy and dissipated fracture energy at the later stage of crack propagation. Nevertheless, the total mechanical-fracture energy evolution is well reproduced by both meshes, indicating that the proposed formulation captures the main energy-conversion mechanism in a mesh-consistent manner.

Figure~\ref{fig:dy_crack_velocity} shows the corresponding crack-tip velocity histories. 
The crack-tip position is sampled every $0.25~\mu\mathrm{s}$, and the velocity is computed using a local three-point linear fitting procedure~\cite{wu2020phase}. After crack initiation, the velocity exhibits pronounced fluctuations, mainly due to the discrete mesh-based crack-tip tracking. Nevertheless, the predicted crack-tip velocities remain below $0.6c_R$ in all cases, where $c_R$ denotes the Rayleigh wave speed. This result is consistent with commonly reported observations in brittle dynamic fracture, where crack propagation speeds are typically bounded well below the Rayleigh wave speed.

\begin{figure}[htbp]
	\centering
	\subfigure[]{
		\includegraphics[width=0.475\textwidth]{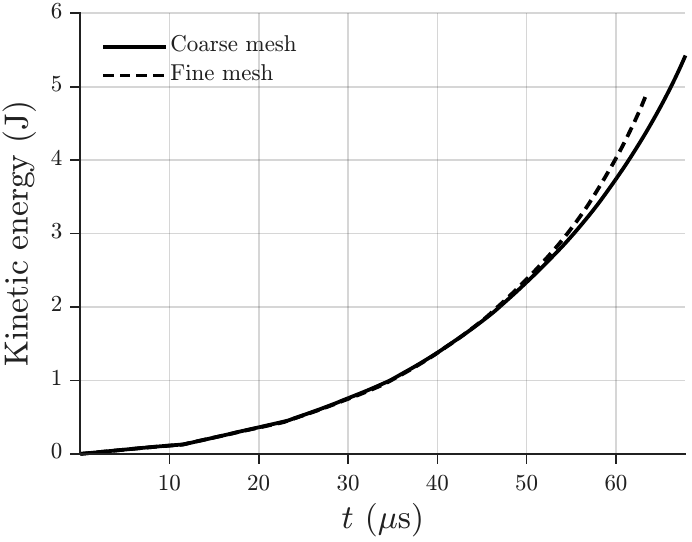}
		\label{fig:dy_kinetic_energy}
	}
	\subfigure[]{
		\includegraphics[width=0.475\textwidth]{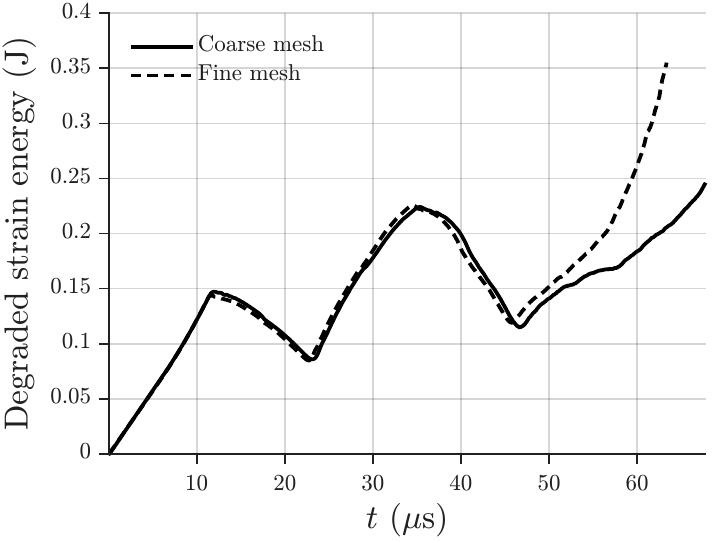}
		\label{fig:dy_degraded_strain_energy}
	}
	\subfigure[]{
		\includegraphics[width=0.475\textwidth]{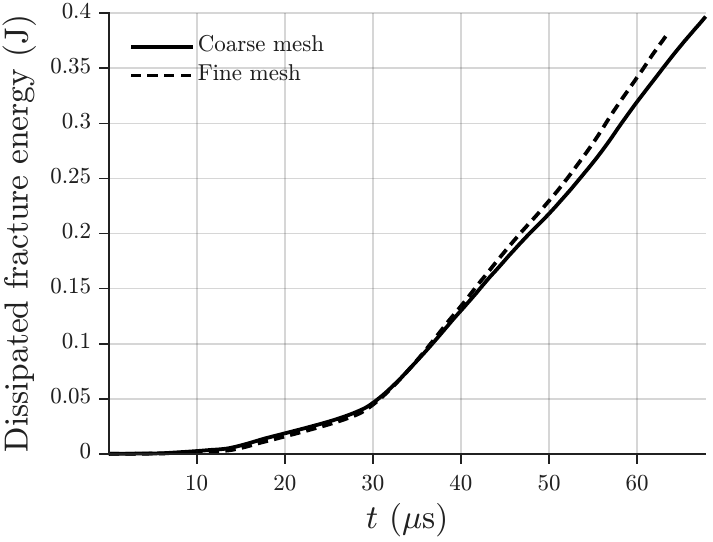}
		\label{fig:dy_fracture_energy}
	}
	\subfigure[]{
	\includegraphics[width=0.475\textwidth]{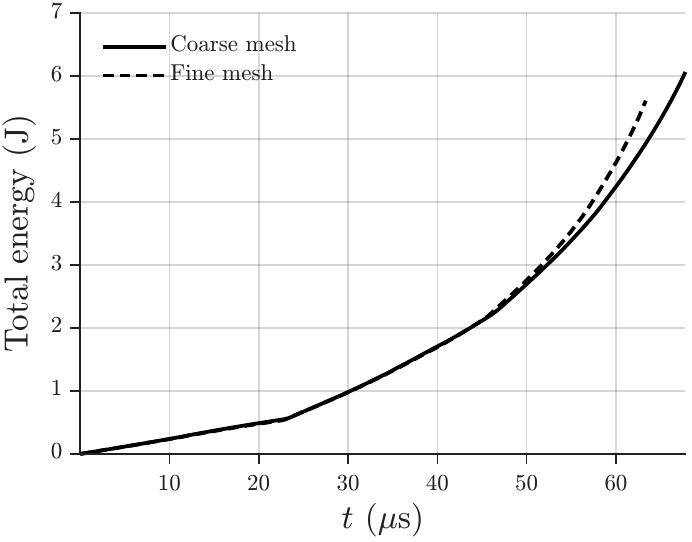}
	\label{fig:total_energy}		
	}
	\caption{Time evolution of the energy components during dynamic crack branching: (a) kinetic energy $\int_{\Omega}\frac{\rho}{2}|\dot{\mathbf u}|^2\,\mathrm dv$, (b) nonlocal degraded strain energy $\mathcal{W}_{\delta,de}(\mathbf{u},\psi)$, (c) nonlocal dissipated fracture energy $G_c\mathcal{A}_\delta(\psi,\mathcal{L}_\delta\psi)$, and (d) total mechanical-fracture energy $\int_{\Omega}\frac{\rho}{2}|\dot{\mathbf u}|^2\,\mathrm dv+\mathcal{W}_{\delta,de}(\mathbf{u},\psi)+G_c\mathcal{A}_\delta(\psi,\mathcal{L}_\delta\psi)$.}
	\label{fig:dy_energy}
\end{figure}

\begin{figure}[htbp]
	\centering
		\includegraphics[width=0.6\textwidth]{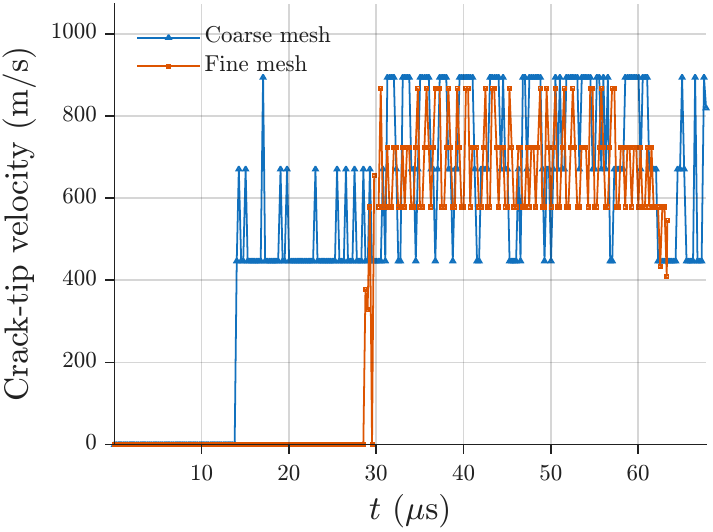}
	\caption{Time evolution of the crack-tip velocity for the coarse and fine meshes in dynamic crack branching. After crack branching, the reported crack-tip velocity is evaluated along the upper branch.}
	\label{fig:dy_crack_velocity}
\end{figure}

The distributions of the phase-field variable $\psi(\mathbf{x},t)$ and the nonlocal hydrostatic stress $\sigma_{n,\mathrm{hyd}}(\mathbf{x},t)$ at selected time steps are shown in 
Figures~\ref{fig:dy_phase_field_coarse_fine_mesh} and \ref{fig:dy_plate_stress_coarse_fine_mesh}, respectively. 
For both mesh resolutions, the crack initiates from the pre-existing notch tip and subsequently develops into a branched crack pattern. The coarse mesh ($h=0.167\,\mathrm{mm}$) and the fine mesh ($h=0.1\,\mathrm{mm}$) produce broadly similar crack trajectories, although minor differences can be observed in the detailed crack width and local branch morphology. 

The corresponding nonlocal hydrostatic stress fields show pronounced stress concentrations near the active crack tips, while the stress is significantly released along the fully developed crack surfaces. These results indicate that the proposed formulation captures the main features of dynamic crack branching in a qualitatively consistent manner under the mesh refinement considered here.

\begin{figure}[htbp]
				\makebox[\textwidth][c]{  
		\begin{minipage}{1.2\textwidth}
	\centering
	\subfigure[Coarse mesh, $t\approx 45\,\mu\mathrm{s}$]{
		\includegraphics[width=0.475\textwidth]{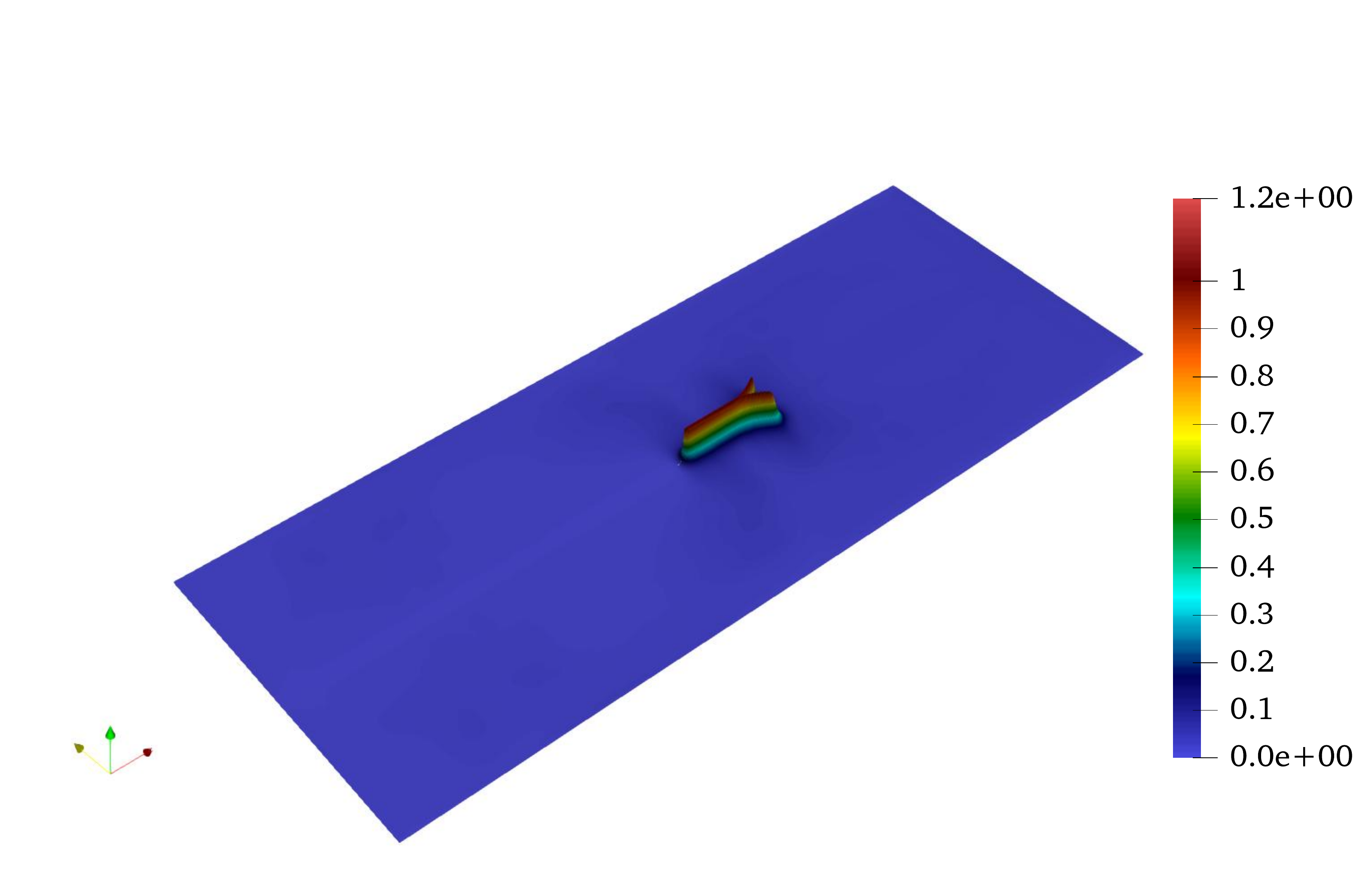}
		\label{fig:dy_plate_coarse_1000}
	}
	\subfigure[Coarse mesh, $t\approx 68\,\mu\mathrm{s}$]{
		\includegraphics[width=0.475\textwidth]{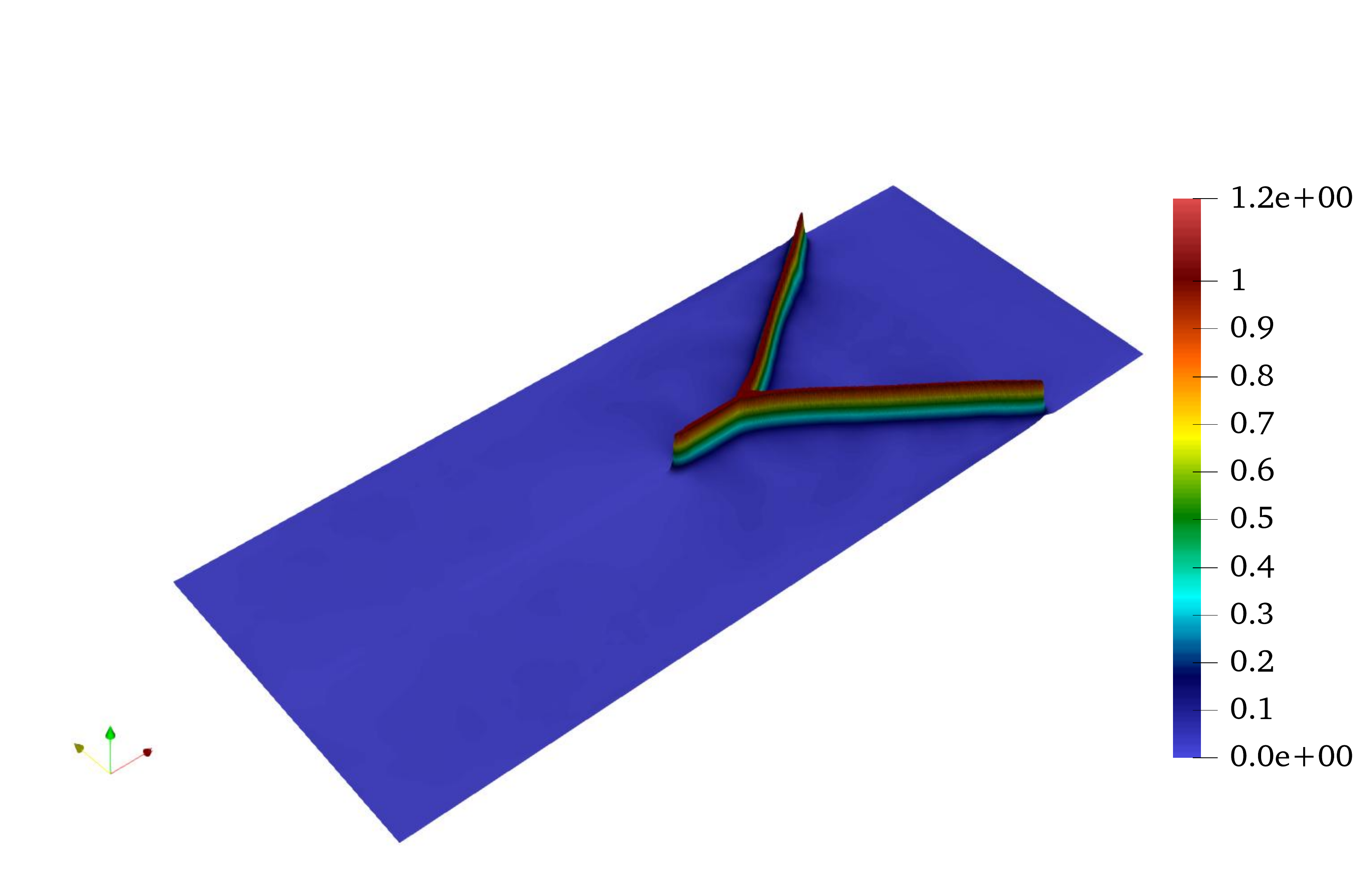}
		\label{fig:dy_plate_coarse_2000}
	}
	
	\subfigure[Fine mesh, $t\approx 45\,\mu\mathrm{s}$]{
		\includegraphics[width=0.475\textwidth]{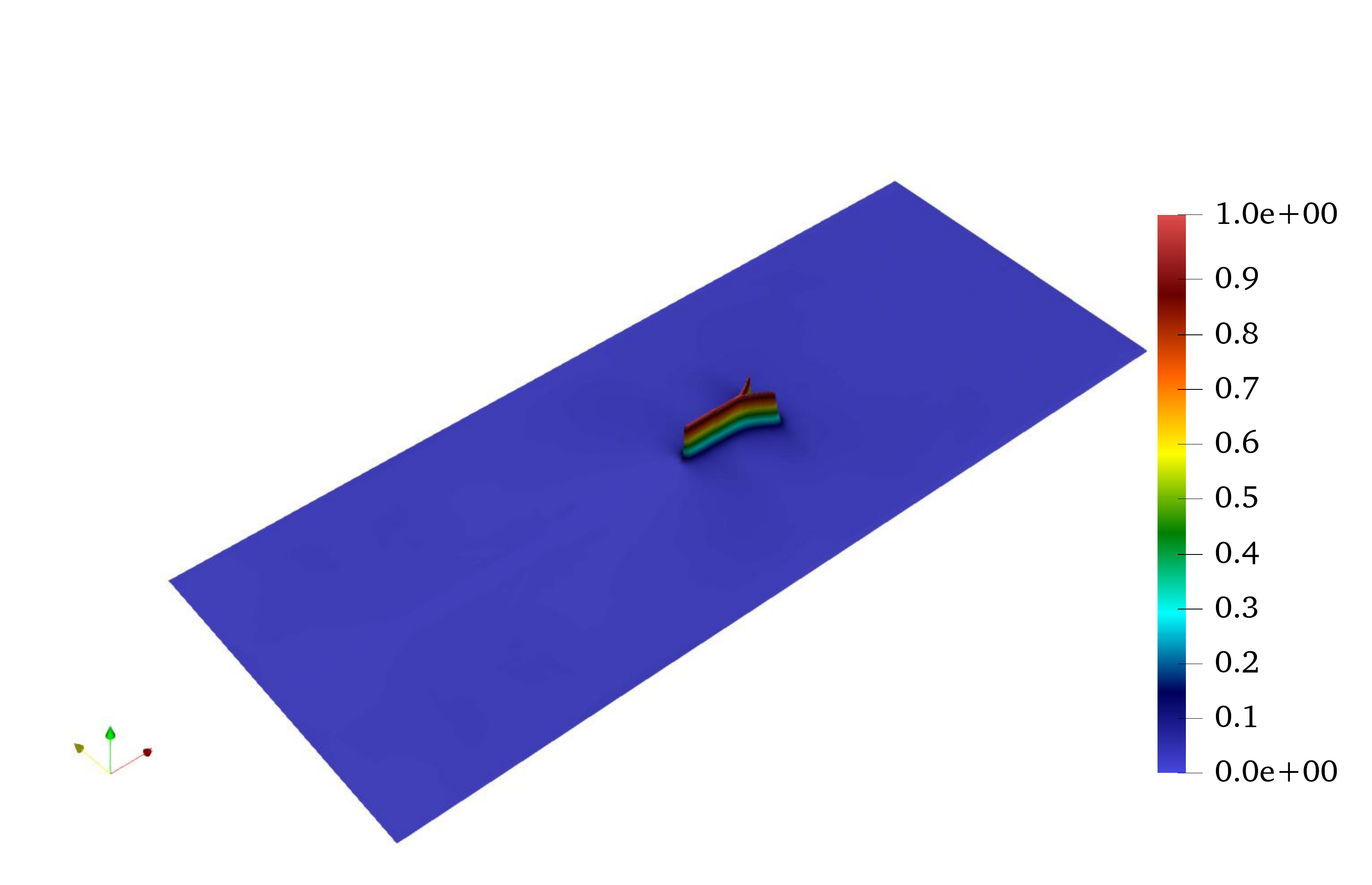}
		\label{fig:dy_plate_fine_1666}
	}
	\subfigure[Fine mesh, $t\approx 68\,\mu\mathrm{s}$]{
		\includegraphics[width=0.475\textwidth]{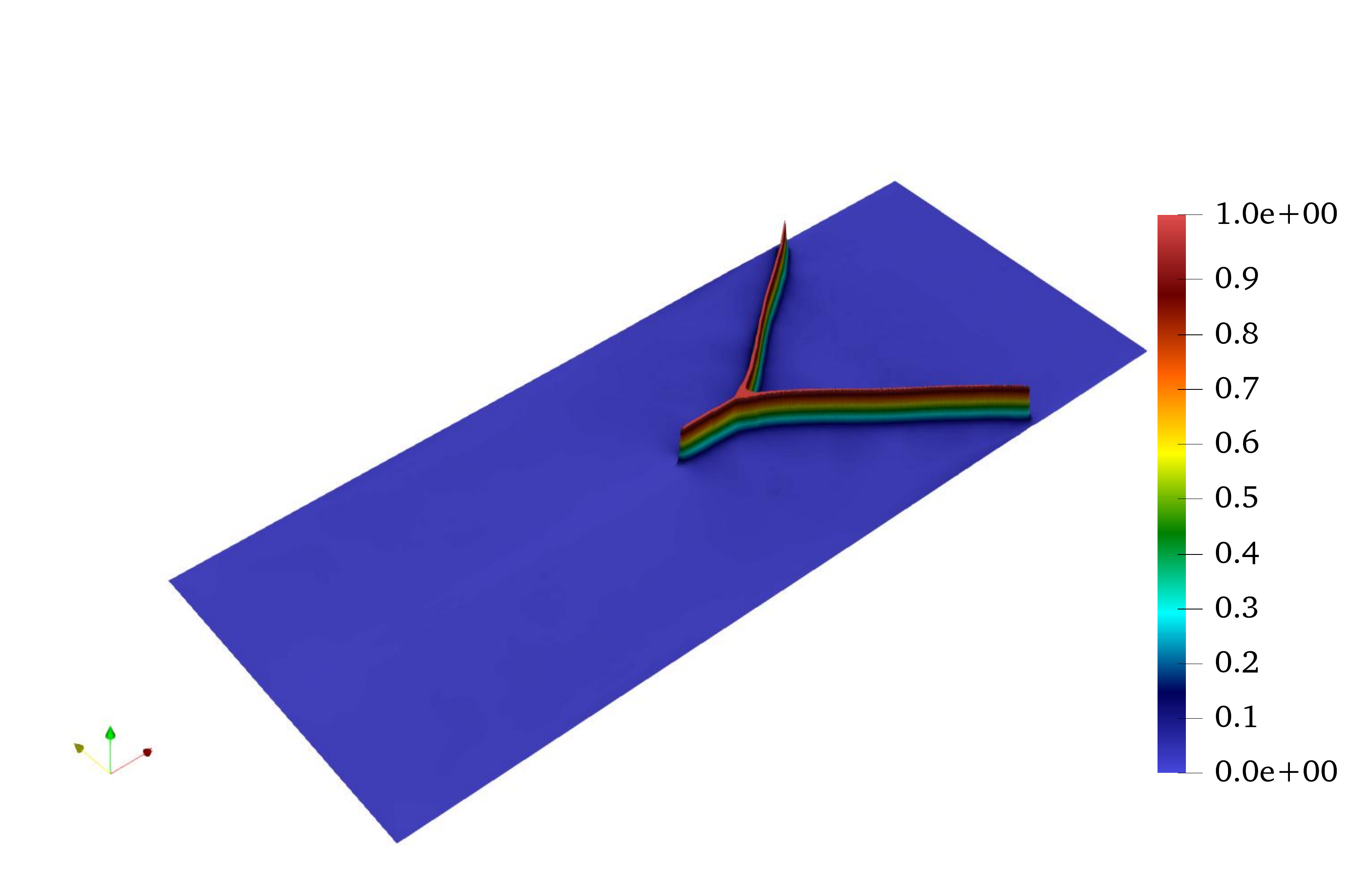}
		\label{fig:dy_plate_fine_3115}
	}
				\end{minipage}
}
\caption{Snapshots of the phase-field $\psi(\mathbf{x},t)$ distribution for the dynamic branching test on coarse and fine meshes.}
\label{fig:dy_phase_field_coarse_fine_mesh}
\end{figure}

\begin{figure}[htbp]
					\makebox[\textwidth][c]{  
		\begin{minipage}{1.2\textwidth}
	\centering
	\subfigure[Coarse mesh, $t\approx45\,\mu\mathrm{s}$]{
		\includegraphics[width=0.475\textwidth]{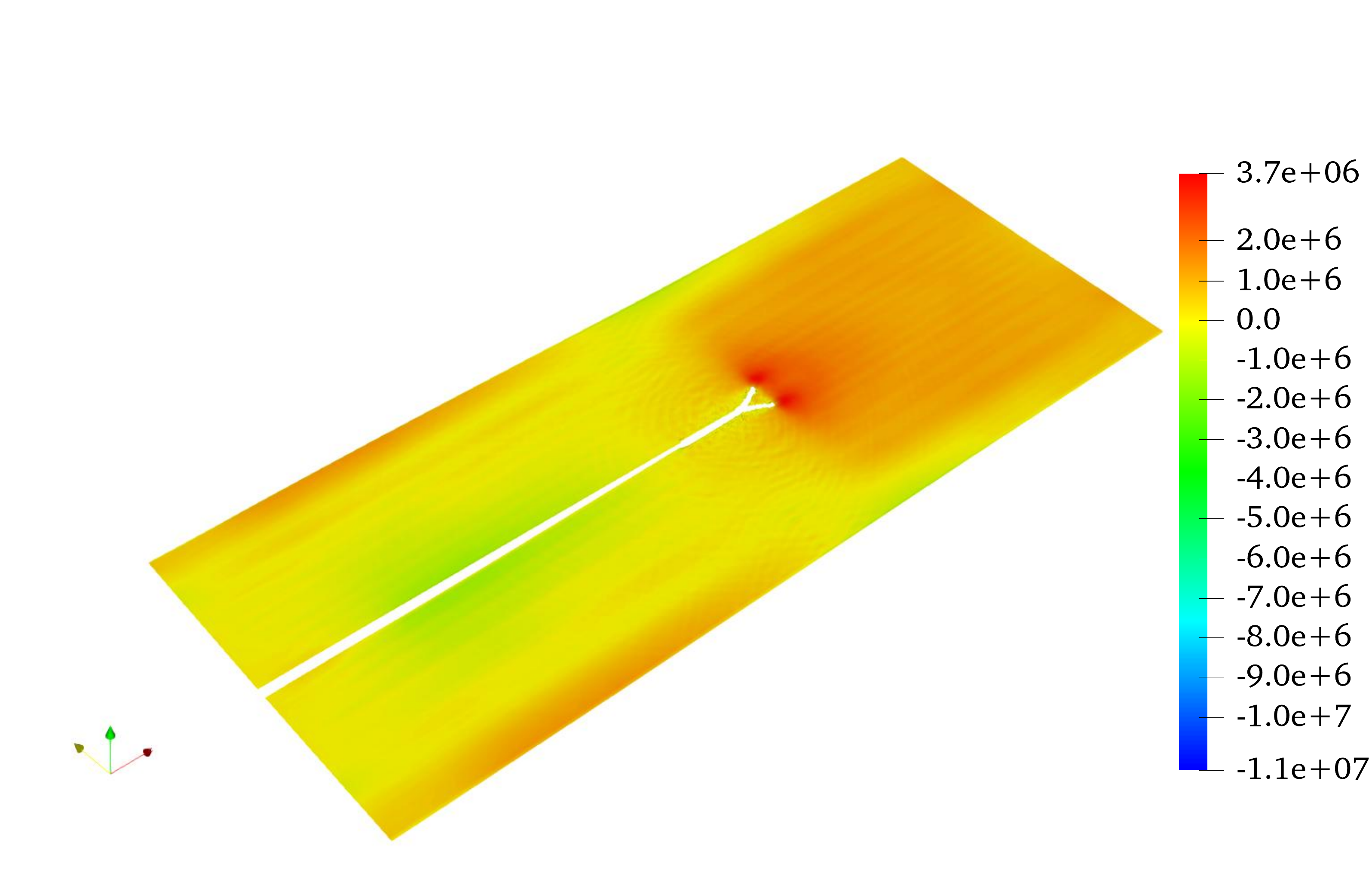}
		\label{fig:dy_plate_stress_coarse_1000}
	}
	\subfigure[Coarse mesh, $t\approx68\,\mu\mathrm{s}$]{
		\includegraphics[width=0.475\textwidth]{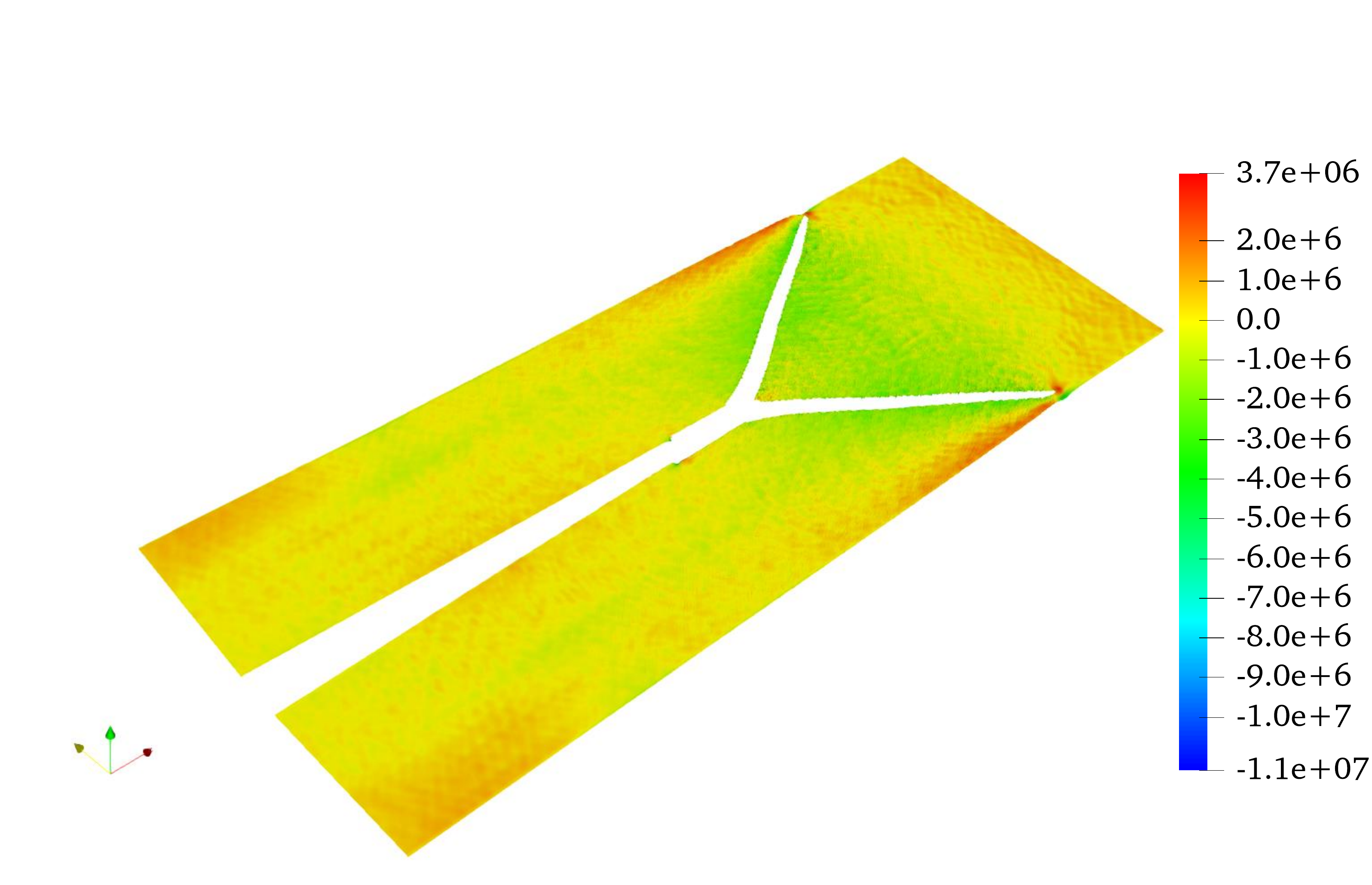}
		\label{fig:dy_plate_stress_coarse_2000}
	}
	
	\subfigure[Fine mesh, $t\approx45\,\mu\mathrm{s}$]{
		\includegraphics[width=0.475\textwidth]{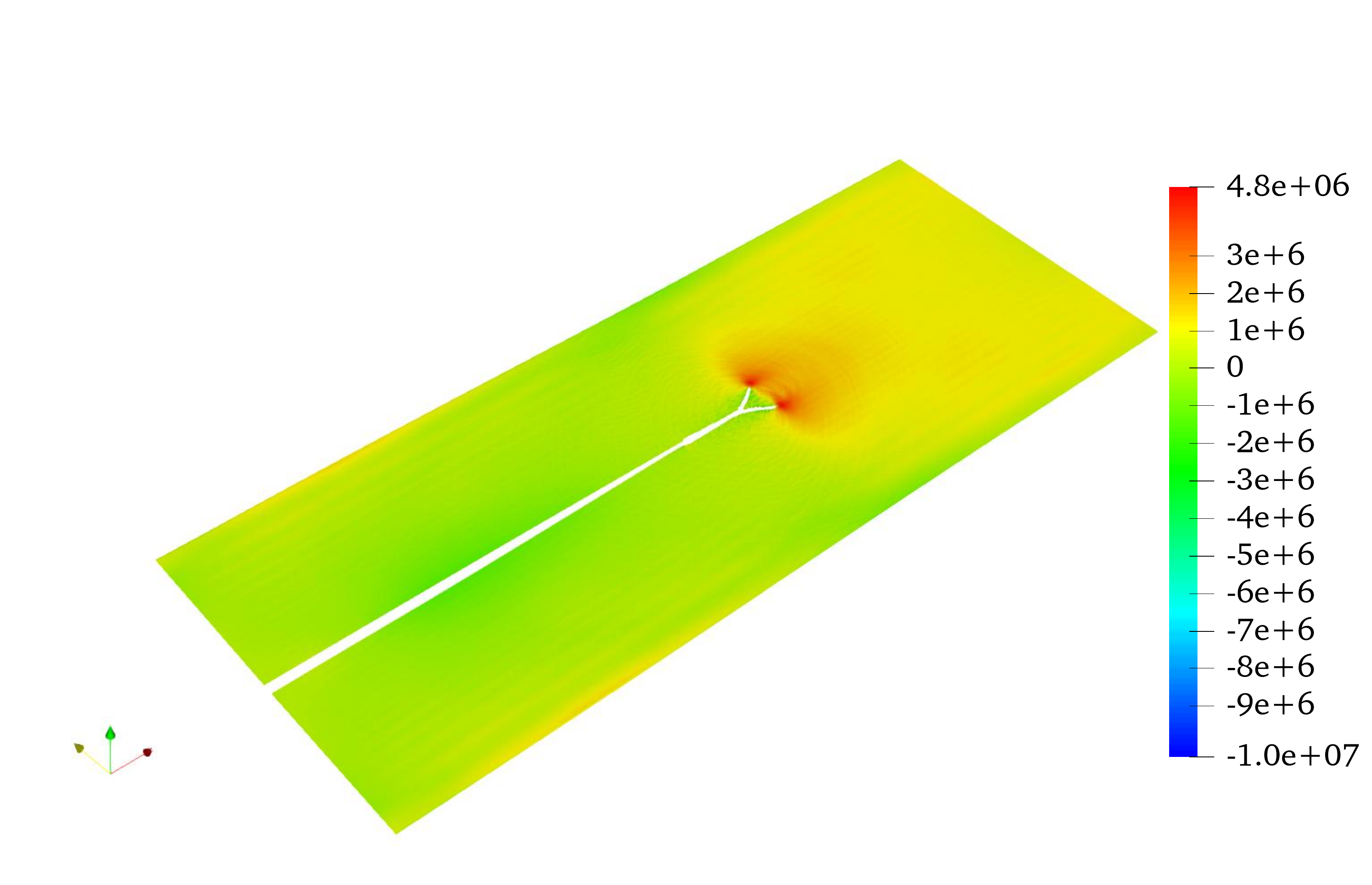}
		\label{fig:dy_plate_stress_fine_1666}
	}
	\subfigure[Fine mesh, $t\approx68\,\mu\mathrm{s}$]{
		\includegraphics[width=0.475\textwidth]{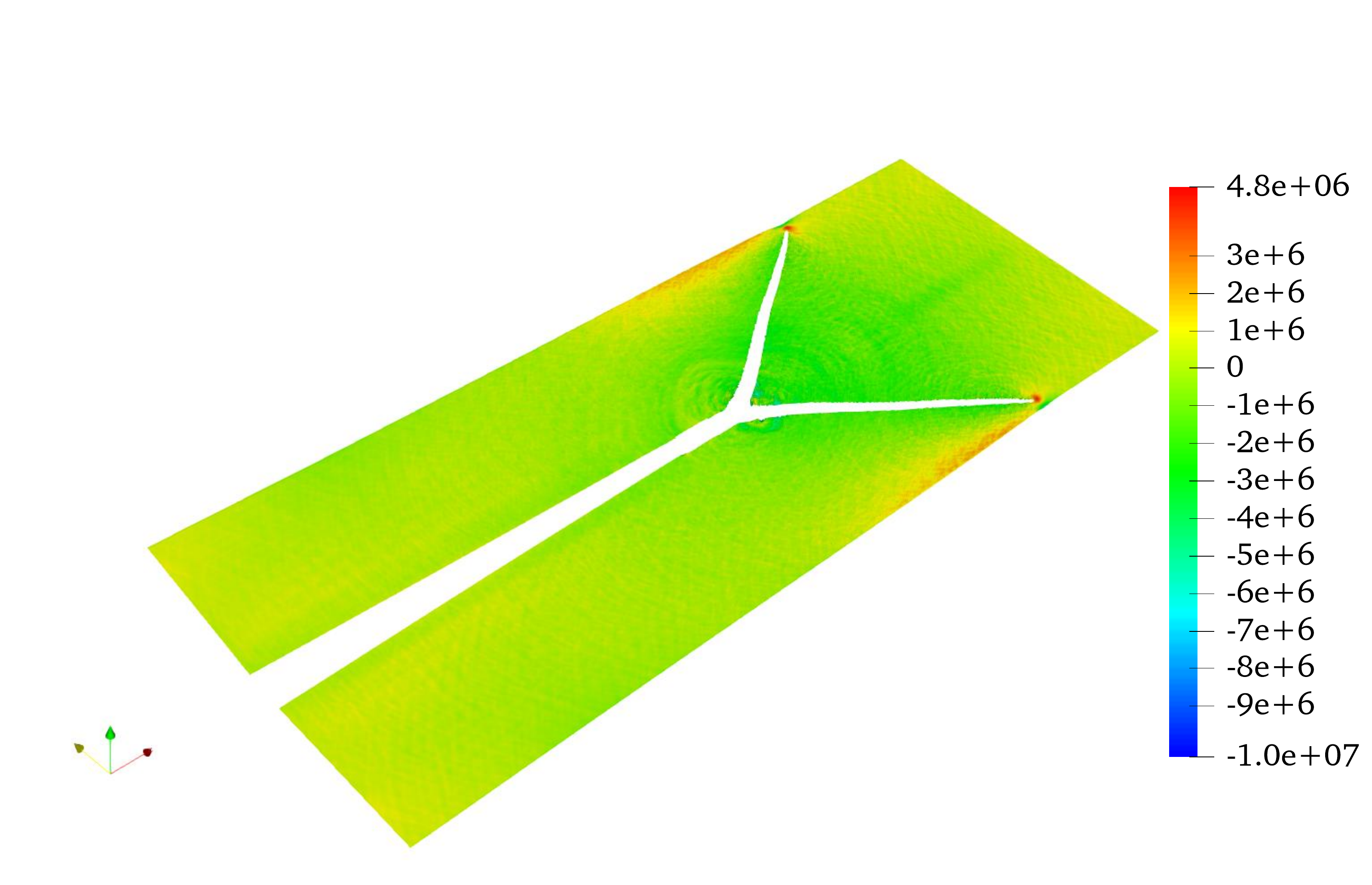}
		\label{fig:dy_plate_stress_fine_3115}
	}
					\end{minipage}
}
	\caption{Snapshots of the nonlocal hydrostatic stress $\sigma_{n,\mathrm{hyd}}(\mathbf{x},t)$ distribution for the dynamic branching test on coarse and fine meshes. The displacement field is magnified by a factor of $50$, and regions with $\psi>0.95$ are omitted for clarity.}
	\label{fig:dy_plate_stress_coarse_fine_mesh}
\end{figure}

\subsection{Dynamic shear loading}
\noindent We next consider the classical Kalthoff--Winkler experiment for dynamic shear failure, in which a projectile impacts a plate with two edge notches. Following the standard low-impact configuration, symmetry is assumed and only the upper half of the specimen is explicitly modeled, as illustrated in Figure~\ref{fig:dy_shear_loading}. The computational domain is taken as $\Omega=[0,0.1]\text{m}\times[0,0.1]\text{m}$, and the plate is initially at rest. To approximate the impact loading, a Dirichlet boundary condition is prescribed for the displacement field on the notched segment of the left boundary,
\begin{equation}\label{eq:dynamic_shear_displacement}
	u_{g,1}(x_1\in[0,\delta],x_2\in[0,0.025],t)=
	\begin{cases}
		\dfrac{v_0}{2t_0}t^2, & t<t_0,\\
		v_0t-\dfrac{1}{2}v_0t_0, & t\ge t_0.
	\end{cases}
\end{equation}
where $v_0=16.5~\mathrm{m/s}$ and $t_0=1~\mu\mathrm{s}$. The lower boundary is treated as a symmetry boundary, whereas the remaining boundaries are traction-free. The physical and numerical parameters used in this example are summarized in Table~\ref{tab:material_parameters_shear}, and plane-strain conditions are assumed throughout the simulation.

\begin{figure}[htbp]
	\centering
	\includegraphics[width=0.75\textwidth]{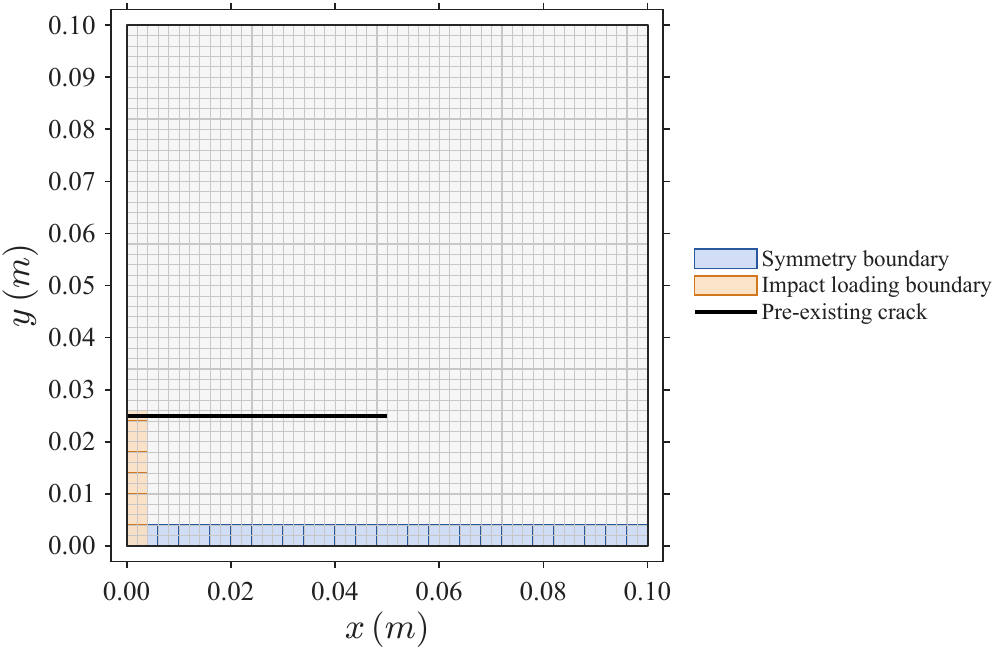}
	\caption{Schematic of the dynamic shear loading test. The mesh shown is for illustration only.}
	\label{fig:dy_shear_loading}
\end{figure}

\begin{table}[htbp]
	\centering
	\caption{Physical and numerical parameters used in the simulations of dynamic shear loading.}
	\label{tab:material_parameters_shear}
	\begin{tabular}{l c c c}
		\toprule
		Physical parameters & Symbol & Unit & Value \\
		\midrule
		Young's modulus              & $E$      & GPa        & 190 \\
		Poisson's ratio              & $\nu$     & --         & 0.3 \\
		Density                      & $\rho$    & kg/m$^{3}$ & $8.00\times10^{3}$ \\
		Critical energy release rate & $G_c$     & J/m$^2$    & $2.21\times10^{4}$ \\
	    Viscous resistance of crack  & $\bar{\eta}$     & s    & $0.0$   \\
		 Rayleigh wave speed     & $c_R$     & m/s        & $2.80\times10^{3}$ \\ 
		\midrule
		Numerical parameters &  &  &  \\
		\midrule
		Time step                             & $\Delta t$       &  $\mu\mathrm{s}$       & $0.25$\\
		Mesh size                    & $h$        & mm         & $0.15$, $0.10$ \\
		Nonlocal interaction length scale  & $\delta$   & mm         & $2.37h$ \\
		Alternating iteration tolerance&$\varepsilon_{\mathrm{tol}}$&--& $1.00\times10^{-4}$\\
		\bottomrule
	\end{tabular}
\end{table}

Figure~\ref{fig:kw_crack_velocity} reports the crack-tip velocity histories, computed from the sampled crack-tip positions using a local three-point linear approximation. After crack initiation, both meshes predict a rapid acceleration followed by a fluctuating propagation stage. The velocities remain well below $0.6c_R$, indicating physically admissible dynamic crack growth. Overall, the coarse and fine meshes produce comparable velocity histories with similar oscillatory characteristics.

\begin{figure}[htbp]
	\centering
	\includegraphics[width=0.6\textwidth]{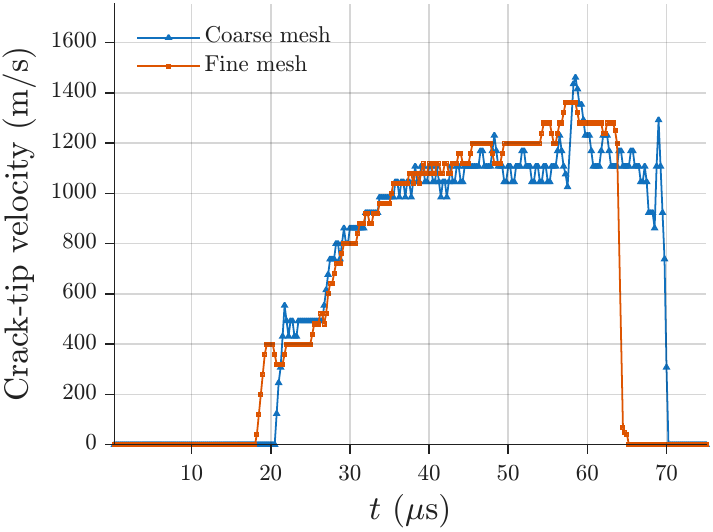}
	\caption{Time evolution of the crack-tip velocity for the coarse and fine meshes in dynamic shear loading.}
	\label{fig:kw_crack_velocity}
\end{figure}

Snapshots of the phase-field $\psi(\mathbf{x},t)$ distribution for the coarse mesh ($h=0.15\mathrm{mm}$) and fine mesh ($h=0.10\mathrm{mm}$) at different times are presented in Figure~\ref{fig:kw_phase_field_coarse_fine_mesh}. As can be seen, in both cases the crack initiates from the notch tip and propagates along an inclined path with an angle of approximately $48^\circ$, which is characteristic of the shear-dominated fracture pattern observed in the Kalthoff--Winkler experiment~\cite{kalthoff2000modes}. Figure~\ref{fig:kw_plate_stress_coarse_fine_mesh} further presents the evolution of the nonlocal hydrostatic stress $\sigma_{n,\mathrm{hyd}}(\mathbf{x},t)$ on the coarse and fine meshes, respectively. It can be observed that the crack-tip stress distribution is consistent with the evolving crack path for both mesh resolutions. These results show that the proposed formulation reproduces the main inclined crack-growth pattern of the Kalthoff--Winkler benchmark. The coarse and fine meshes give qualitatively comparable crack trajectories, while local differences remain visible in the crack-tip morphology and stress-wave patterns.

\begin{figure}[htbp]
						\makebox[\textwidth][c]{  
		\begin{minipage}{1.2\textwidth}
	\centering
	\subfigure[Coarse mesh, $t=37.5\,\mu\mathrm{s}$]{
		\includegraphics[width=0.475\textwidth]{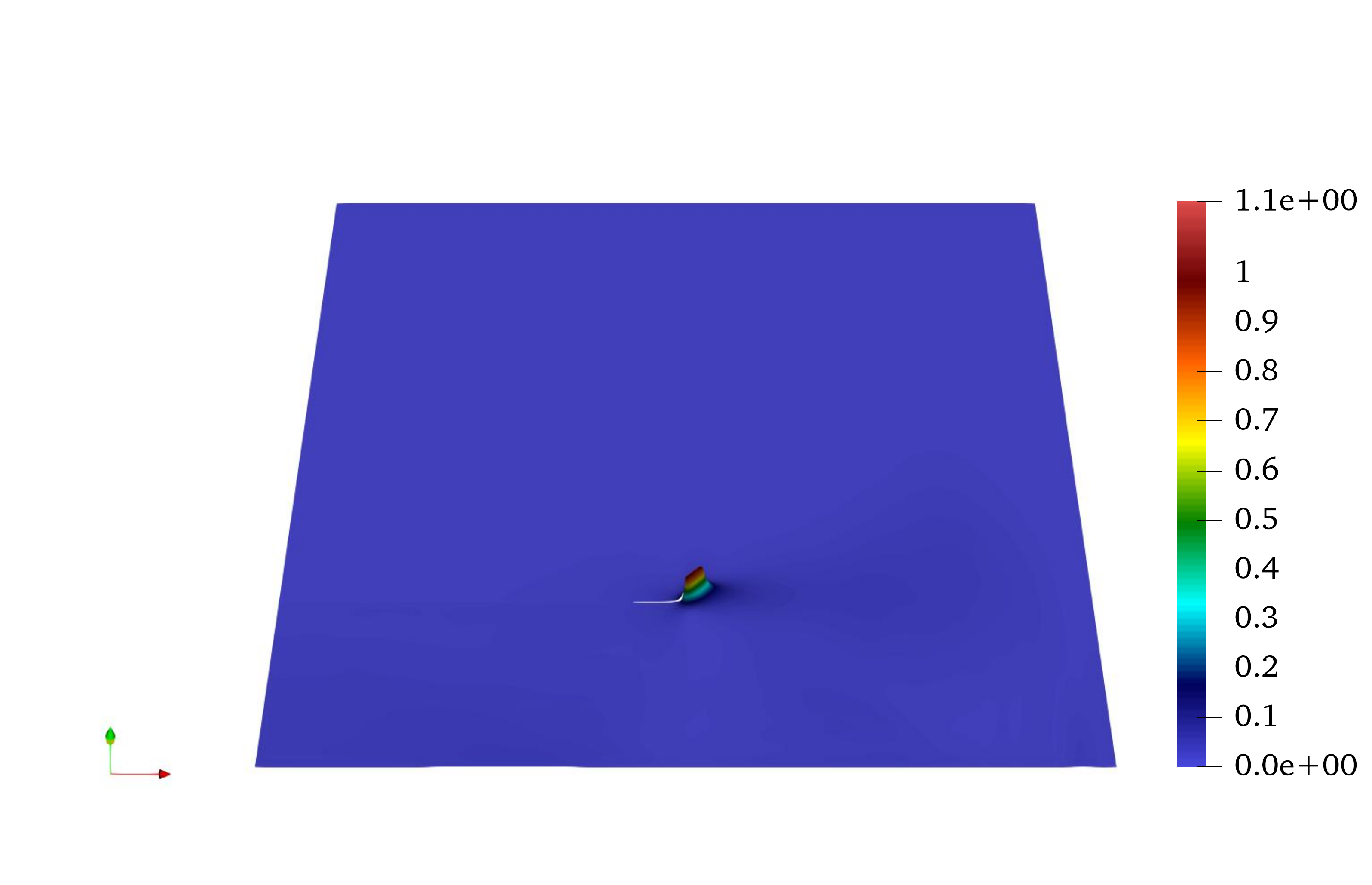}
		\label{fig:kw_plate_coarse_100}
	}
	\subfigure[Coarse mesh, $t=62.5\,\mu\mathrm{s}$]{
		\includegraphics[width=0.475\textwidth]{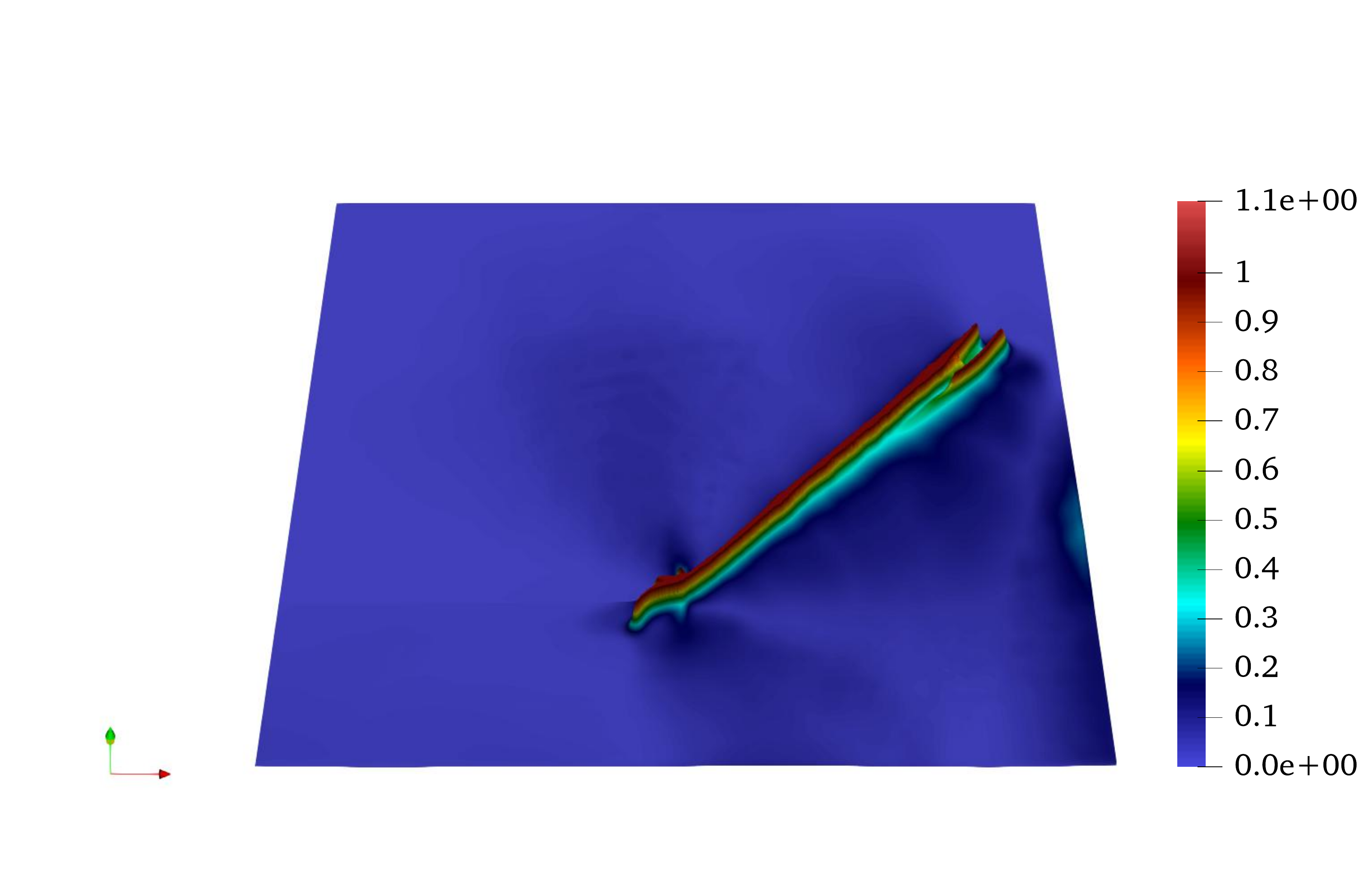}
		\label{fig:kw_plate_coarse_250}
	}
	
	\subfigure[Fine mesh, $t=37.5\,\mu\mathrm{s}$]{
		\includegraphics[width=0.475\textwidth]{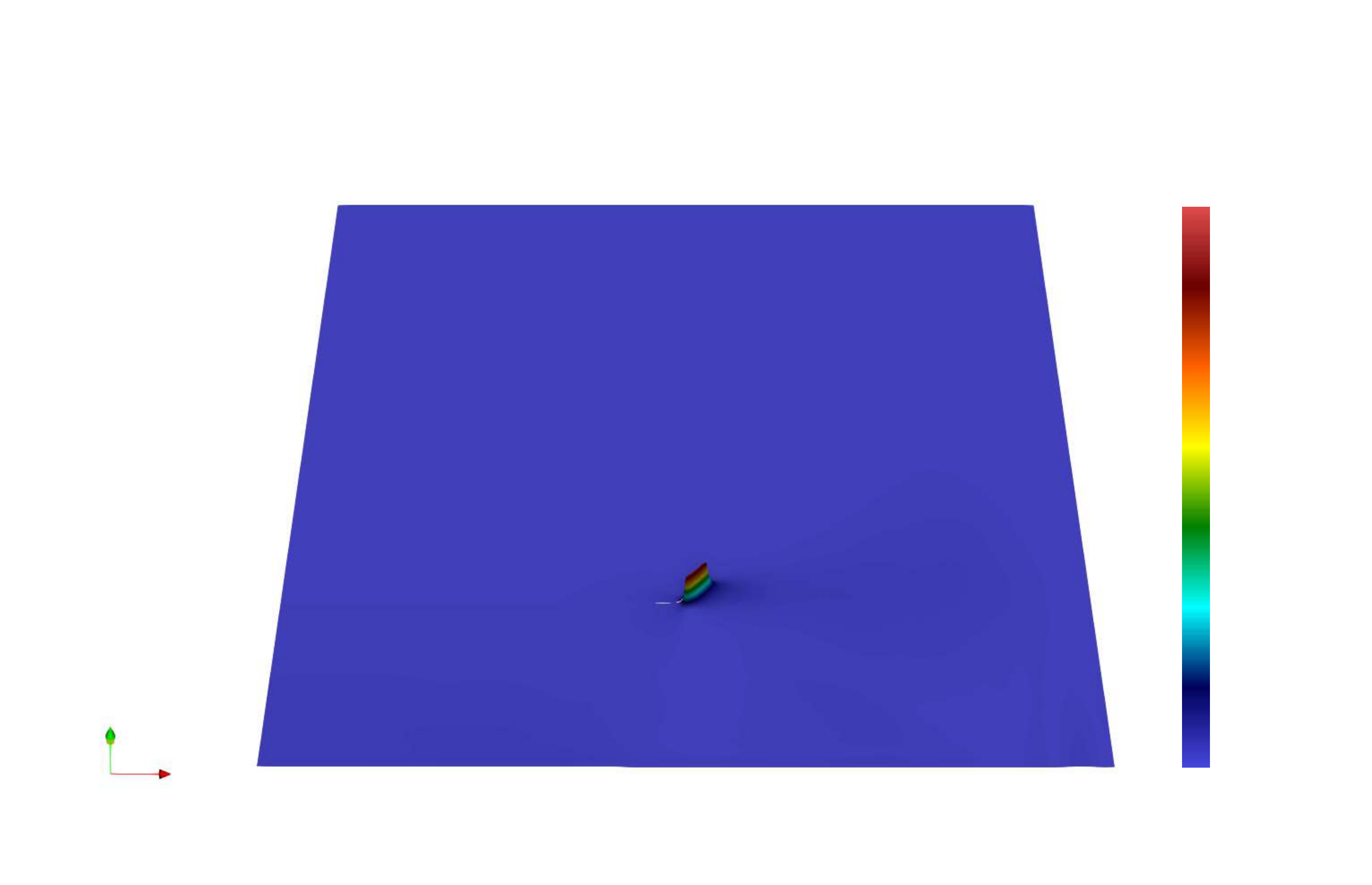}
		\label{fig:kw_plate_fine_100}
	}
	\subfigure[Fine mesh, $t=57.5\,\mu\mathrm{s}$]{
		\includegraphics[width=0.475\textwidth]{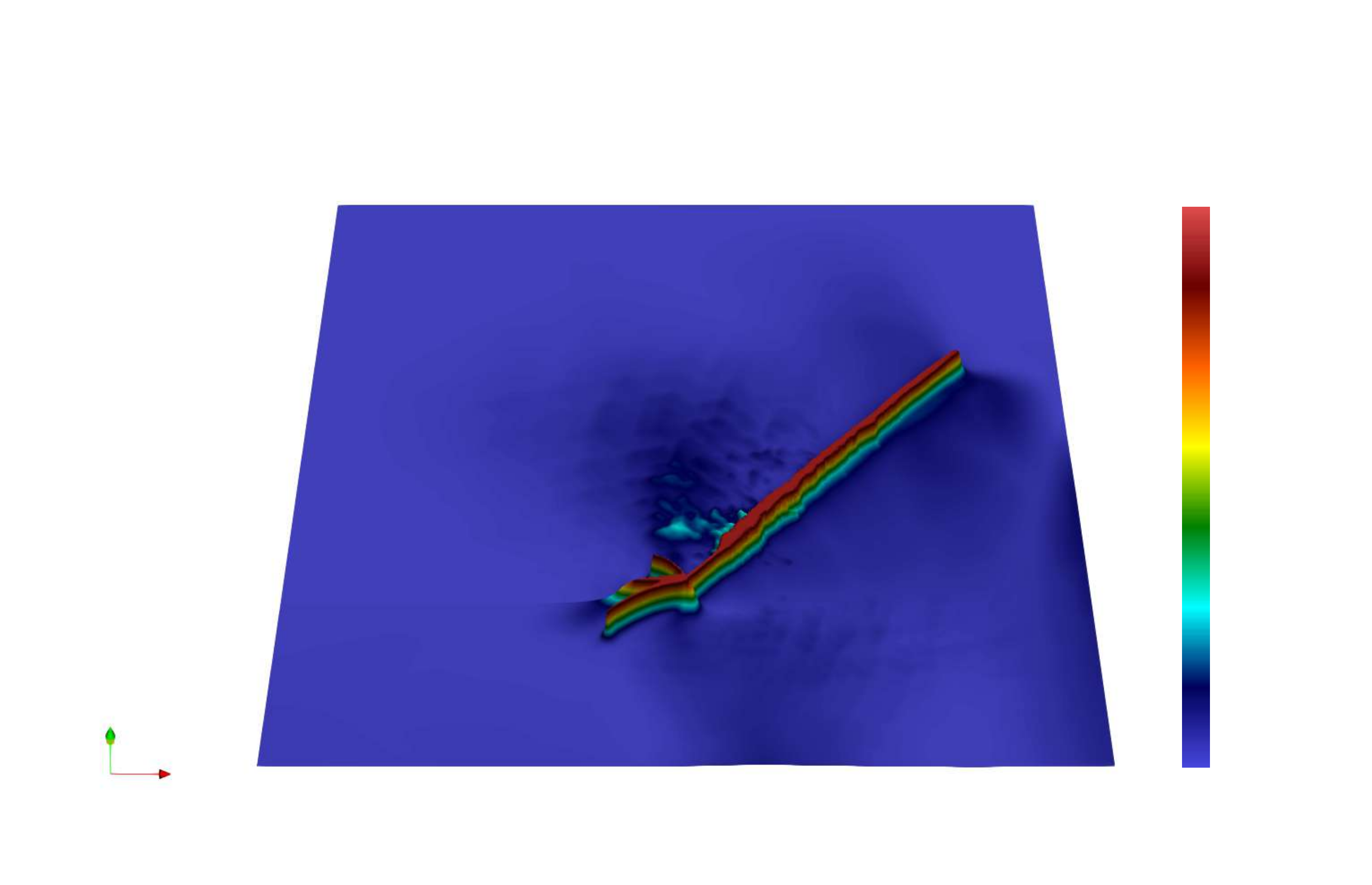}
		\label{fig:kw_plate_fine_230}
	}
\end{minipage}
}
	\caption{Snapshots of the phase-field $\psi(\mathbf{x},t)$ distribution for the Kalthoff--Winkler-type shear fracture test on coarse and fine meshes.}
	\label{fig:kw_phase_field_coarse_fine_mesh}
\end{figure}

\begin{figure}[htbp]
							\makebox[\textwidth][c]{  
		\begin{minipage}{1.2\textwidth}
	\centering
	\subfigure[Coarse mesh, $t=37.5\,\mu\mathrm{s}$]{
		\includegraphics[width=0.475\textwidth]{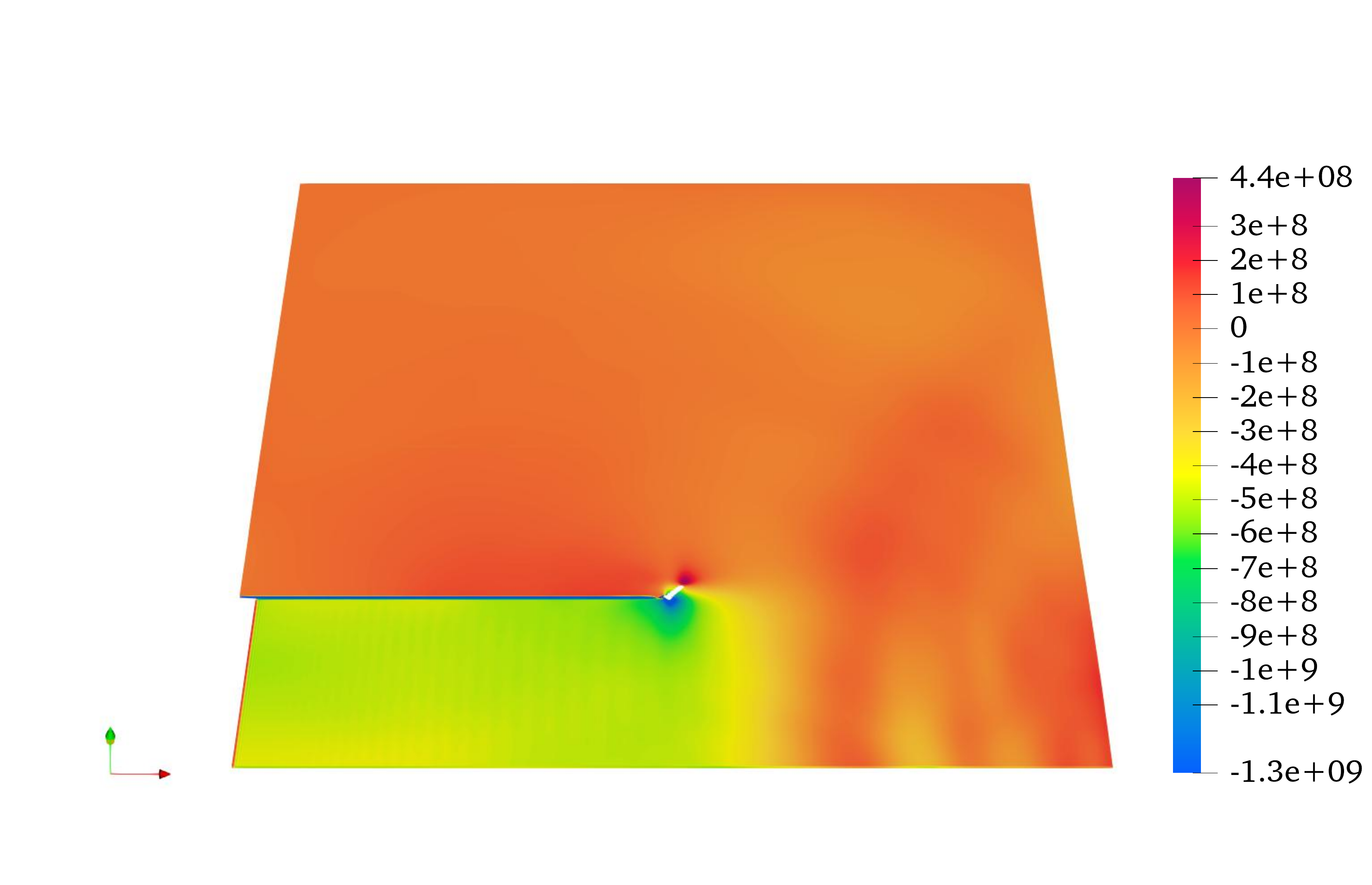}
		\label{fig:kw_plate_stress_coarse_100}
	}
	\subfigure[Coarse mesh, $t=62.5\,\mu\mathrm{s}$]{
		\includegraphics[width=0.475\textwidth]{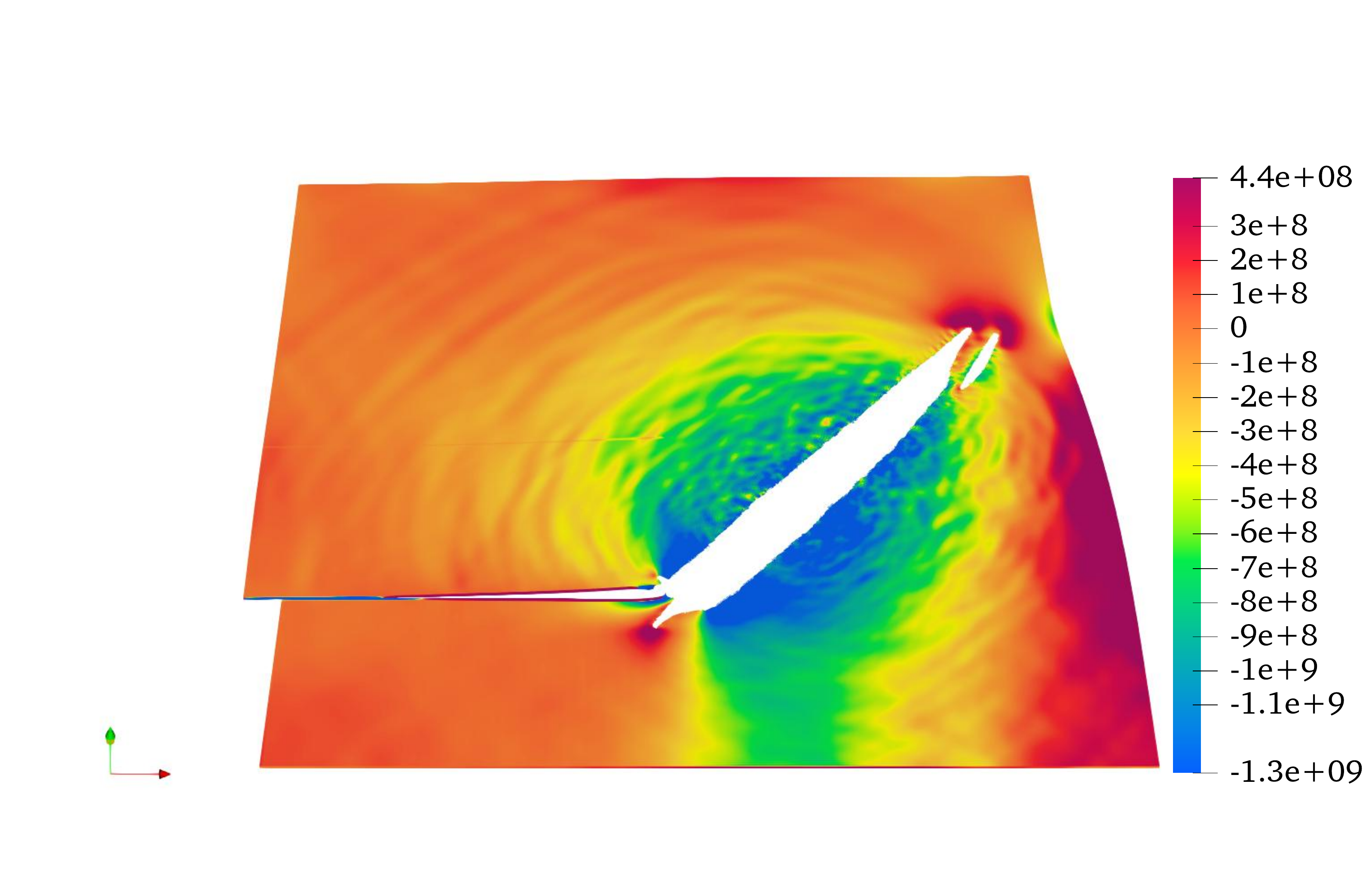}
		\label{fig:kw_plate_stress_coarse_250}
	}
	
	\subfigure[Fine mesh, $t=37.5\,\mu\mathrm{s}$]{
		\includegraphics[width=0.475\textwidth]{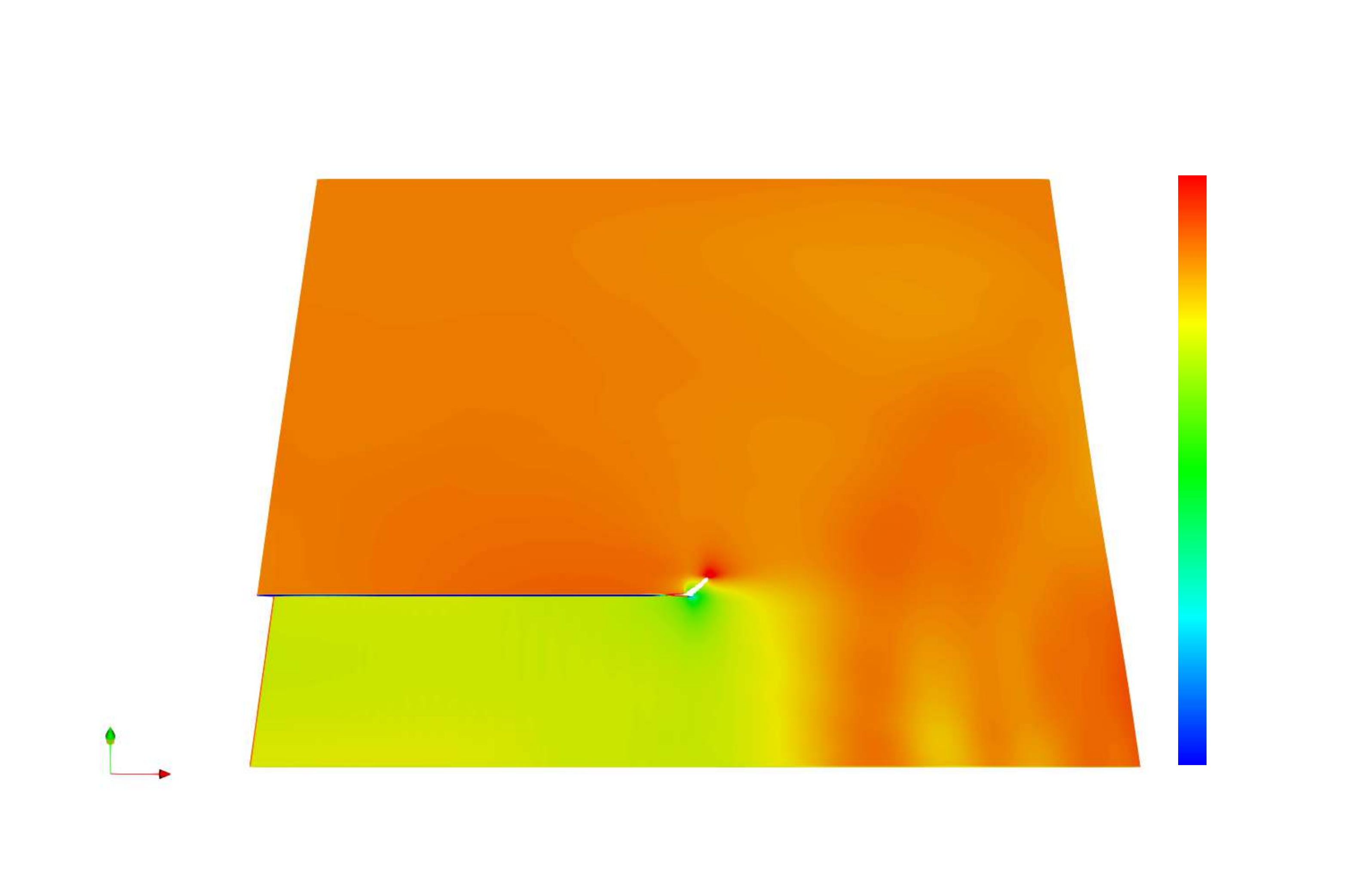}
		\label{fig:kw_plate_stress_fine_100}
	}
	\subfigure[Fine mesh, $t=57.5\,\mu\mathrm{s}$]{
		\includegraphics[width=0.475\textwidth]{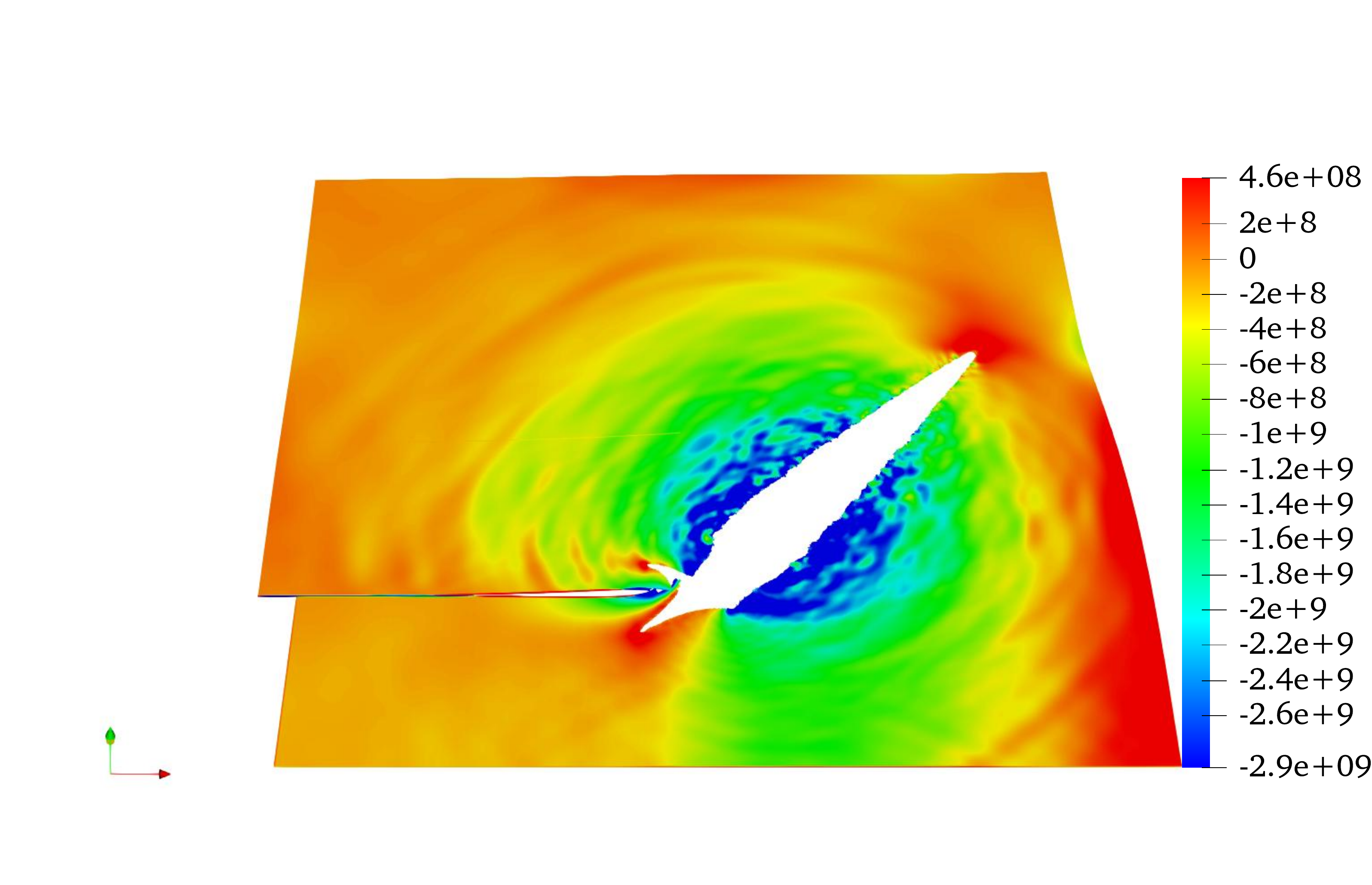}
		\label{fig:kw_plate_stress_fine_230}
	}
						\end{minipage}
}
	\caption{Snapshots of the nonlocal hydrostatic stress $\sigma_{n,\mathrm{hyd}}(\mathbf{x},t)$ distribution for the Kalthoff--Winkler-type shear fracture test on coarse and fine meshes. The displacement field is magnified by a factor of $5$, and regions with $\psi>0.95$ are omitted for clarity.}
	\label{fig:kw_plate_stress_coarse_fine_mesh}
\end{figure}

\subsection{Fragmentation of an annular disk under internal pressure}
\noindent We finally consider the fragmentationof an annular disk \cite{song2009cracking, ren2025dual}  in $\mathbb{R}^2$ subjected to an impulsive pressure applied on its inner boundary, as illustrated in Figure~\ref{fig:hollow_disk}. The inner and outer radii are $80\,\mathrm{mm}$ and $100\,\mathrm{mm}$, respectively. The pressure loading is prescribed as
\begin{equation}
	\mathbf{p}(\mathbf{x}, t) = -p_0 \exp(-t/t_0)\hat{\mathbf{n}}(\mathbf{x}),
\end{equation}
where $\hat{\mathbf{n}}(\mathbf{x})$ denotes the outward unit normal vector on the inner boundary of the computational domain. The negative sign indicates that the pressure acts outward on the material from the inner surface. The initial pressure is $p_0 = 800\,\mathrm{MPa}$, and the decay time is $t_0 = 100\,\mu\mathrm{s}$. The local pressure $\mathbf{p}(\mathbf{x},t)$ is converted into the corresponding nonlocal boundary pressure $\mathbf{p}_n(\mathbf{x},t)$ using the local--nonlocal traction equivalence condition in \eqref{eq:condition}.

\begin{figure}[htbp]
	\centering
	\includegraphics[width=0.5\textwidth]{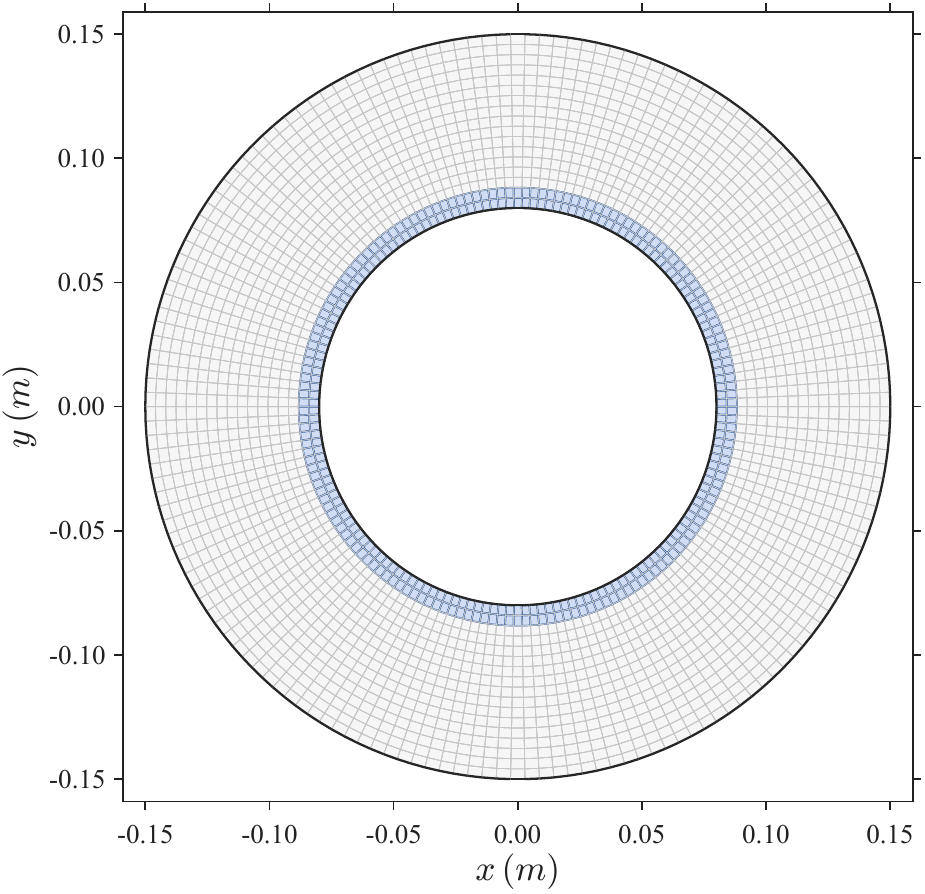}
	\caption{Schematic of the fragmentation of an annular disk, where the blue region denotes the nonlocal traction boundary.}
	\label{fig:hollow_disk}
\end{figure}

The physical and numerical parameters used in this example are summarized in Table~\ref{tab:material_parameters_frag}. To account for material heterogeneity, Young's modulus is perturbed spatially by $5\%$ around its nominal value $E_0$.This example illustrates the ability of the proposed formulation to capture complex crack patterns involving multiple crack nucleation, crack interaction, and fragmentation under impulsive internal pressure.

\begin{table}[htbp]
	\centering
	\caption{Physical and numerical parameters used in the fragmentation simulations.}
	\label{tab:material_parameters_frag}
	\begin{tabular}{l c c c}
		\toprule
		Physical parameters & Symbol & Unit & Value \\
		\midrule
	    Nominal Young's modulus              & $E_0$      & GPa        &  210 \\
		Poisson's ratio              & $\nu$     & --         & 0.3 \\
		Density                      & $\rho$    & kg/m$^{3}$ & $7.85\times10^3$ \\
		Critical energy release rate & $G_c$     & J/m$^2$     & $2.00\times10^4$   \\
		Viscous resistance of crack  & $\bar{\eta}$     & s & $0.0$   \\
		\midrule
		Numerical parameters &  &  &  \\
		\midrule
		Time step                             & $\Delta t$       &  $\mu\mathrm{s}$       & $0.2$\\
		Mesh size                              & $h$       & mm         & 1.0  \\
		Nonlocal interaction length scale  & $\delta$  & mm         &     $2.37h$   \\
		Alternating iteration tolerance & $\varepsilon_{\mathrm{tol}}$& --&$1.00\times10^{-3}$\\
		\bottomrule
	\end{tabular}
\end{table}

Figure~\ref{fig:sphere_phase_field_stress} shows the distributions of the phase-field variable 
$\psi(\mathbf{x},t)$ and the corresponding nonlocal hydrostatic stress 
$\sigma_{n,\mathrm{hyd}}(\mathbf{x},t)$ at two representative times. Multiple cracks nucleate from the inner boundary of the annular disk and subsequently propagate toward the outer boundary. As the pressure wave evolves, the crack pattern becomes increasingly complex, with several competing crack branches developing around the annulus, demonstrating the capability of the proposed formulation to capture multiple crack nucleation, crack interaction, and fragmentation under impulsive loading. The stress distributions exhibit pronounced stress concentrations near the active crack tips, while stress release occurs along the fully developed crack surfaces.

\begin{figure}[htbp]
\makebox[\textwidth][c]{  
		\begin{minipage}{1.2\textwidth}
	\centering
	\subfigure[Phase-field, $t=48\,\mu\mathrm{s}$]{
		\includegraphics[width=0.475\textwidth]{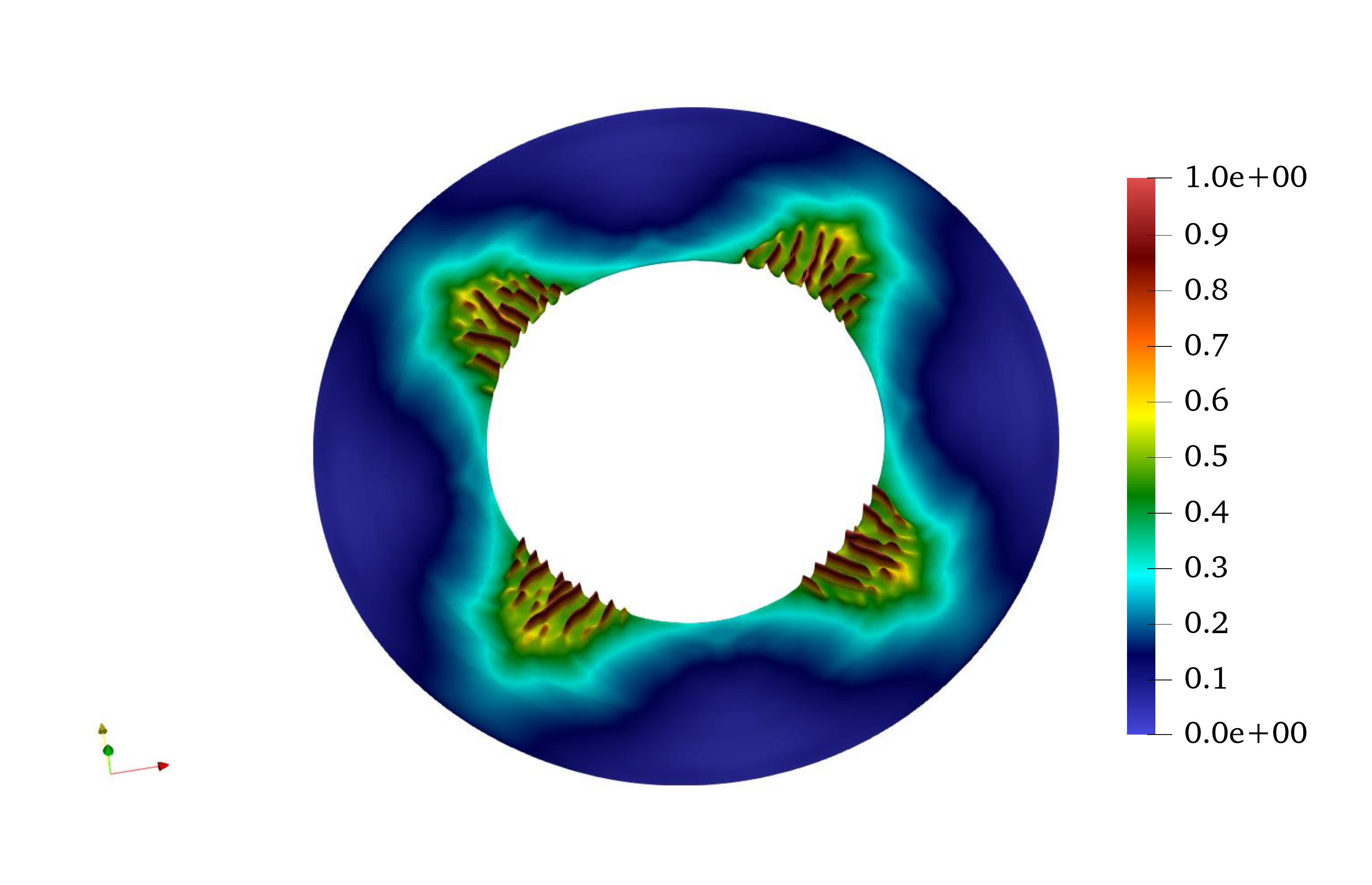}
		\label{fig:sphere_800MPa_210}
	}
	\subfigure[Phase-field, $t=60\,\mu\mathrm{s}$]{
		\includegraphics[width=0.475\textwidth]{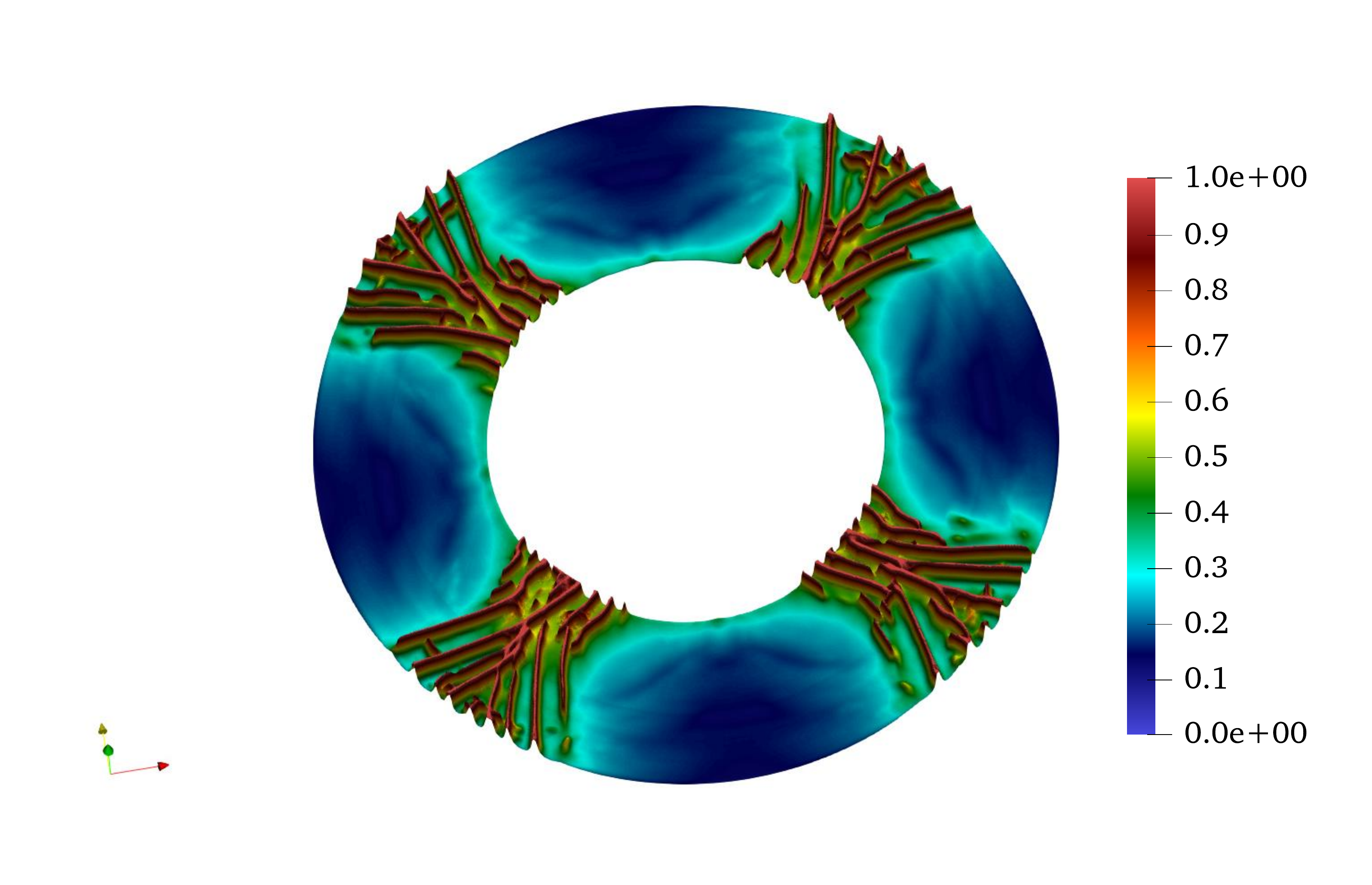}
		\label{fig:sphere_800MPa_300}
	}
	
	\subfigure[Hydrostatic stress, $t=48\,\mu\mathrm{s}$]{
		\includegraphics[width=0.475\textwidth]{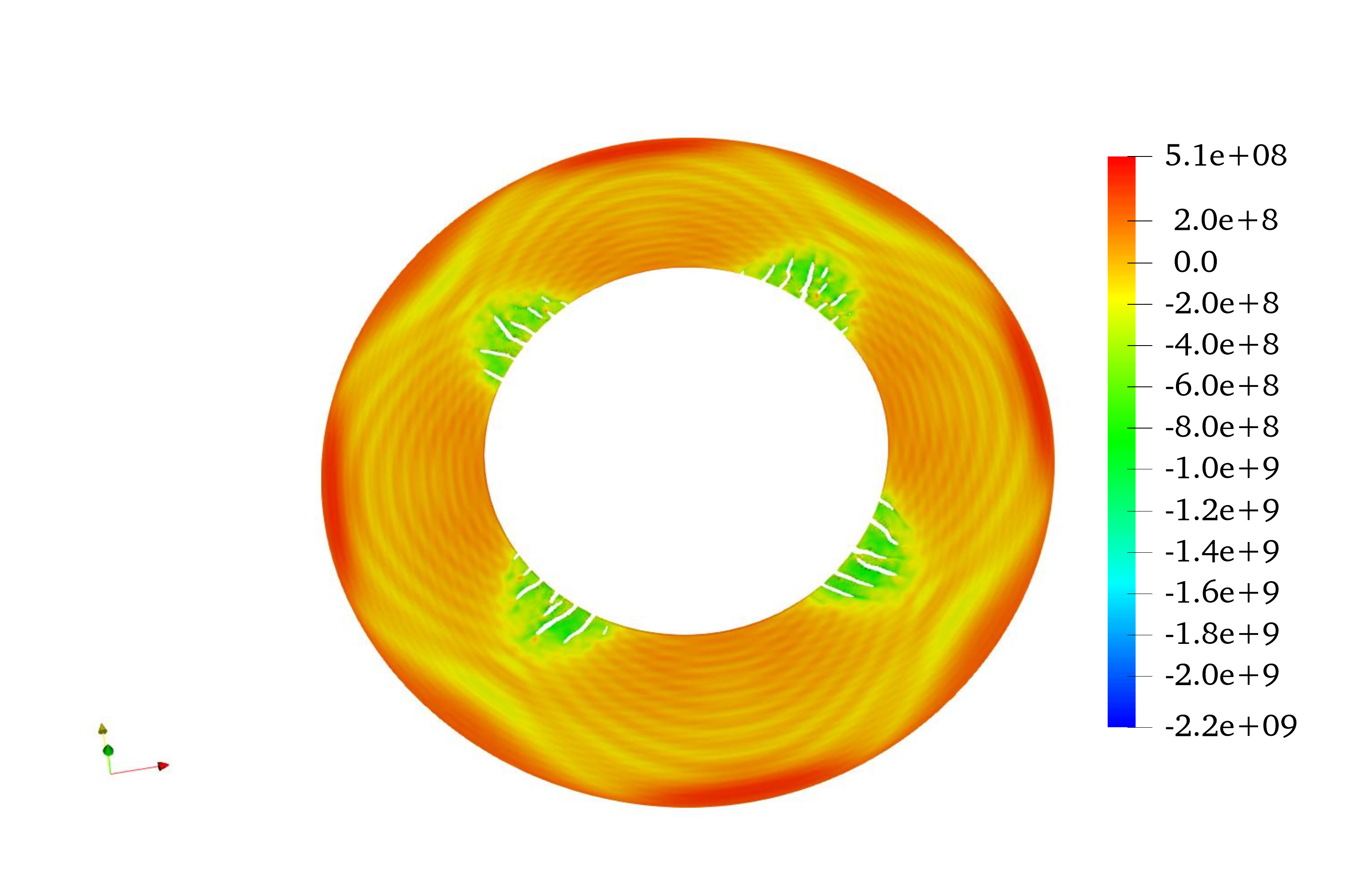}
		\label{fig:sphere_stress_fine_210}
	}
	\subfigure[Hydrostatic stress, $t=60\,\mu\mathrm{s}$]{
		\includegraphics[width=0.475\textwidth]{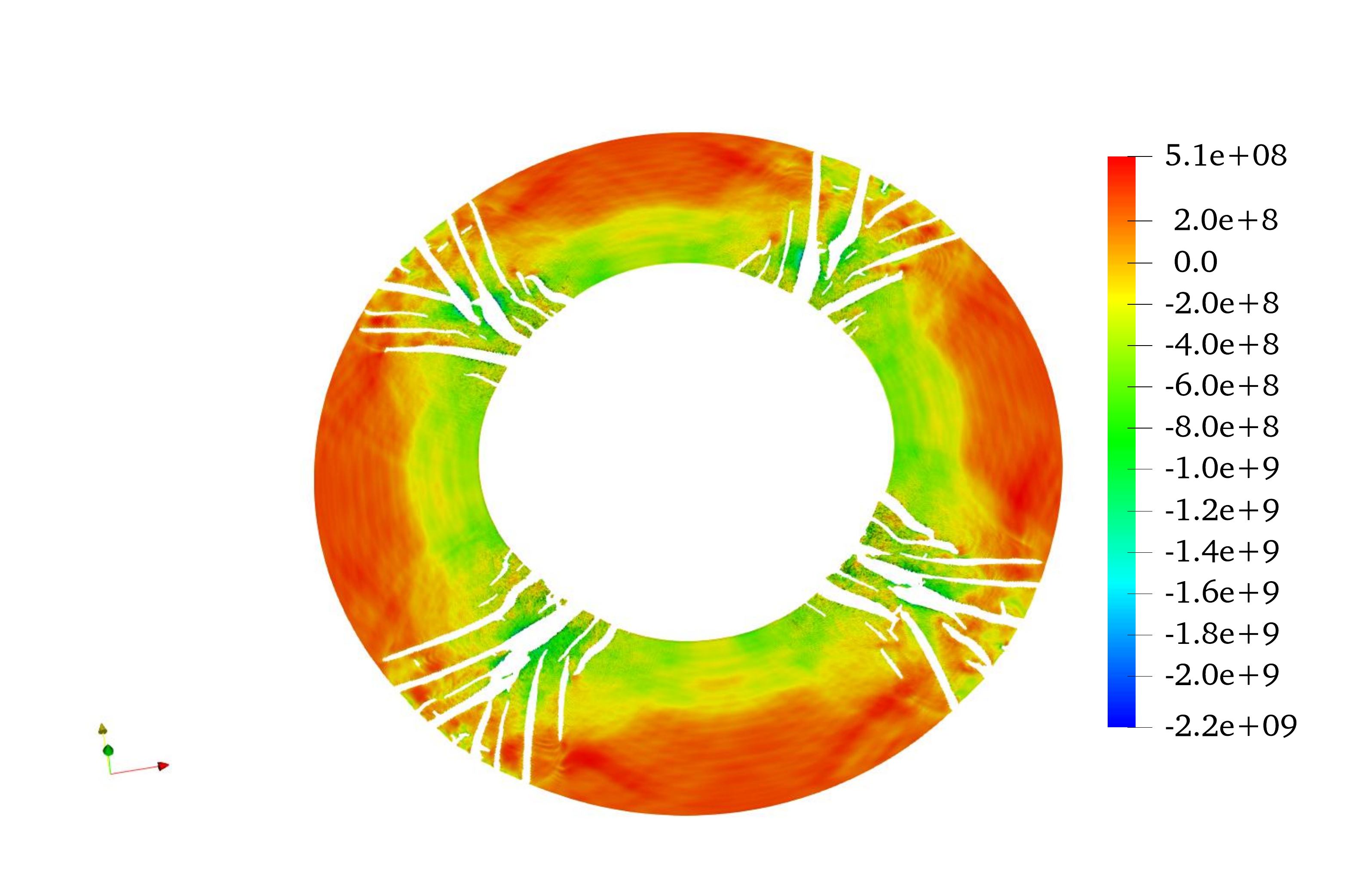}
		\label{fig:sphere_stress_fine_300}
	}
    \end{minipage}
}
	\caption{Snapshots of the phase-field $\psi(\mathbf{x},t)$ and the nonlocal hydrostatic stress $\sigma_{n,\mathrm{hyd}}(\mathbf{x},t)$ distributions at two representative times. In the stress plots, the displacement field is magnified by a factor of $5$, and regions with $\psi>0.95$ are omitted for clarity.}
	\label{fig:sphere_phase_field_stress}
\end{figure}

\section{Conclusions and future work}
\label{sec:conclusion}

\noindent This paper developed a variational nonlocal phase-field formulation for dynamic fracture in elastic solids. The model combines nonlocal kinematics, a nonlocal crack-surface functional, and an irreversible phase-field evolution law within a unified variational framework. The main conclusions are summarized as follows:

\begin{itemize}
	
		\item  A kernel-dependent nonlocal formulation was constructed for the displacement and phase-field variables. It allows weaker regularity requirements than classical local continuum formulations and recovers the local theory as the nonlocal interaction length scale vanishes.
	
		\item A nonlocal crack-surface functional was introduced as an integral counterpart of the Ambrosio--Tortorelli regularization. The diffusive-crack width is implicitly controlled by the nonlocal interaction domain, rather than by an independently prescribed length scale.
	
		\item A variationally consistent displacement--phase-field system was developed, together with a structure-preserving SAV scheme and a staggered alternating scheme. The numerical examples show that the proposed model captures Mode-I crack propagation, dynamic branching, shear-dominated fracture, and fragmentation-like crack patterns without explicit crack tracking.
	
\end{itemize}

	Several limitations remain. The present numerical examples are restricted to two-dimensional problems, and a rigorous well-posedness theory and fully discrete energy stability analysis for the staggered scheme are still open. In addition, the element-center approximation of the discrete interaction neighborhood may introduce mesh-dependent geometric errors. Future work will focus on the well-posedness and convergence analysis of the proposed model, fully discrete energy estimates, systematic studies with respect to $h$, $\delta$, and $\delta/h$, and three-dimensional extensions using efficient interaction-domain construction, such as octree-based or hierarchical neighbor-search strategies. Extensions to multiphysical fracture problems will also be investigated.

\section*{Acknowledgements}
\noindent

%% The Appendices part is started with the command \appendix;
%% appendix sections are then done as normal sections
% \appendix

\appendix
\section{Proof of Lemma~\ref{lem:nonlocal-gradient-estimate}} \label{app:proof-nonlocal-gradient-estimate}
\noindent \begin{proof}
	For simplicity, the dependence on  $t$ is omitted. Then, for any $\mathbf{x}\in\Omega$,
	by the Cauchy--Schwarz inequality, we obtain
	\begin{equation}
		\begin{aligned}
			\bigl|\mathcal{G}_\delta(\mathbf{u})(\mathbf{x})\bigr|^2
			&= \biggl|
			\int_{\Omega}
			\big(\mathbf{u}(\mathbf{y})-\mathbf{u}(\mathbf{x})\big)
			\otimes \boldsymbol{\omega}_\delta(\mathbf{x},\mathbf{y})\,
			\mathrm{d}v_\mathbf{y}
			\biggr|^2 \\[2mm]
			&\le 
			\biggl(
			\int_{\Omega}
			\lvert\mathbf{u}(\mathbf{y})-\mathbf{u}(\mathbf{x})\rvert^2
			\,\omega_\delta(\lvert\mathbf{y}-\mathbf{x}\rvert)
			\,\mathrm{d}v_\mathbf{y}
			\biggr)
			\biggl(
			\int_{\Omega}
			\frac{\lvert\boldsymbol{\omega}_\delta(\mathbf{x},\mathbf{y})\rvert^2}
			{\omega_\delta(\lvert\mathbf{y}-\mathbf{x}\rvert)}
			\,\mathrm{d}v_\mathbf{y}
			\biggr).
		\end{aligned}
	\end{equation}
	Assumption~{[\textbf{A2}]} ensures that there exists a constant
	$C_\omega>0$ such that
	\begin{equation}
		\int_{\Omega}
		\frac{\lvert\boldsymbol{\omega}_\delta(\mathbf{x},\mathbf{y})\rvert^2}
		{\omega_\delta(\lvert\mathbf{y}-\mathbf{x}\rvert)}
		\,\mathrm{d}v_\mathbf{y}\leq\int_{B_{\delta}(\mathbf{0})}|\bm{\xi}|^2\omega_\delta(|\bm{\xi}|)
		\,\mathrm{d}v_{\bm{\xi}}
		=: C_\omega,
		\quad\forall\mathbf{x}\in\Omega.
	\end{equation}
	Hence, 
	\begin{equation}
		\bigl|\mathcal{G}_\delta(\mathbf{u})(\mathbf{x})\bigr|^2
		\le C_\omega
		\int_{\Omega}
		\lvert\mathbf{u}(\mathbf{y})-\mathbf{u}(\mathbf{x})\rvert^2
		\omega_\delta(\lvert\mathbf{y}-\mathbf{x}\rvert)
		\,\mathrm{d}v_\mathbf{y}.
	\end{equation}
	Integrating over $\mathbf{x}\in\Omega$ yields
	\begin{equation}
		\|\mathcal{G}_\delta(\mathbf{u})\|_{L^2(\Omega)}^2
		\le C_\omega 
		\int_{\Omega}\int_{\Omega}
		\lvert\mathbf{u}(\mathbf{y})-\mathbf{u}(\mathbf{x})\rvert^2
		\omega_\delta(\lvert\mathbf{y}-\mathbf{x}\rvert)\,
		\mathrm{d}v_\mathbf{y}\mathrm{d}v_\mathbf{x}
		= C_\omega\,[\mathbf{u}]_{\mathcal{V}_\omega(\Omega)}^2,
	\end{equation}
	which proves \eqref{eq:nonlocal-G-upper} and shows that 
	$\mathcal{G}_\delta(\mathbf{u})\in L^2(\Omega; \mathbb{R}^{d\times d})$ whenever 
	$[\mathbf{u}]_{\mathcal{V}_\omega(\Omega)}<\infty$.
	
\end{proof}

\section{Algebraic details for the proof of Theorem~\ref{thm:Hamiltonian_balance}}
\label{app:energy-dissipation-proof}

In this appendix, we provide the intermediate algebraic steps omitted in the proof of
Theorem~\ref{thm:Hamiltonian_balance}. Multiplying the nonlocal momentum balance by
$\dot{\mathbf u}$ and integrating over $\Omega\setminus\Gamma_\delta$ gives
\begin{equation}
	\begin{aligned}
		\int_{\Omega\setminus\Gamma_\delta}
		\rho\ddot{\mathbf u}\cdot\dot{\mathbf u}\,\mathrm dv
		&=
		\int_{\Omega\setminus\Gamma_\delta}
		\mathcal D_\delta(\boldsymbol\sigma_\delta^{\mathrm d})
		\cdot\dot{\mathbf u}\,\mathrm dv
		+
		\int_{\Omega\setminus\Gamma_\delta}
		\mathbf b\cdot\dot{\mathbf u}\,\mathrm dv .
	\end{aligned}
\end{equation}
Using the nonlocal integration-by-parts identity and the nonlocal traction
condition on $\Gamma_{\delta,t}$, we obtain
\begin{equation}
	\begin{aligned}
		\int_{\Omega\setminus\Gamma_\delta}
		\mathcal D_\delta(\boldsymbol\sigma_\delta^{\mathrm d})
		\cdot\dot{\mathbf u}\,\mathrm dv
		&=
		-\int_\Omega
		\boldsymbol\sigma_\delta^{\mathrm d}:
		\dot{\boldsymbol\varepsilon}_\delta
		\,\mathrm dv
		+
		\int_{\Gamma_{\delta,t}}
		\mathbf t\cdot\dot{\mathbf u}\,\mathrm dv .
	\end{aligned}
\end{equation}
Moreover,
\begin{equation}
	\int_{\Omega\setminus\Gamma_\delta}
	\rho\ddot{\mathbf u}\cdot\dot{\mathbf u}\,\mathrm dv
	=
	\frac{\mathrm d}{\mathrm dt}
	\int_{\Omega\setminus\Gamma_\delta}
	\frac{\rho}{2}|\dot{\mathbf u}|^2\,\mathrm dv .
\end{equation}
Therefore, the kinetic-energy balance becomes
\begin{equation}\label{eq:kinetic_balance_for_energy_thm_app}
	\begin{aligned}
		\frac{\mathrm d}{\mathrm dt}
		\int_{\Omega\setminus\Gamma_\delta}
		\frac{\rho}{2}|\dot{\mathbf u}|^2\,\mathrm dv
		&=
		-\int_\Omega
		\boldsymbol\sigma_\delta^{\mathrm d}:
		\dot{\boldsymbol\varepsilon}_\delta
		\,\mathrm dv
		+
		(\mathbf b,\dot{\mathbf u})_{L^2(\Omega\setminus\Gamma_\delta)}
		+
		(\mathbf t,\dot{\mathbf u})_{\Gamma_{\delta,t}} .
	\end{aligned}
\end{equation}
Since the external loading potential is defined by
\begin{equation}
	\ell_u(\mathbf u)
	=
	(\mathbf b,\mathbf u)_{L^2(\Omega\setminus\Gamma_\delta)}
	+
	(\mathbf t,\mathbf u)_{\Gamma_{\delta,t}},
\end{equation}
and the external data are assumed to be time-independent, we have
\begin{equation}\label{eq:external_loading_potential_derivative_app}
	\frac{\mathrm d}{\mathrm dt}\ell_u(\mathbf u)
	=
	(\mathbf b,\dot{\mathbf u})_{L^2(\Omega\setminus\Gamma_\delta)}
	+
	(\mathbf t,\dot{\mathbf u})_{\Gamma_{\delta,t}} .
\end{equation}
Next, using
\[
\mathcal W_{\delta,de}(\mathbf u,\psi)
=
g(\psi)\mathcal W_{\delta,e}^{+}(\mathbf u)
+
\mathcal W_{\delta,e}^{-}(\mathbf u),
\qquad
g(\psi)=(1-\psi)^2,
\]
we obtain by the chain rule
\begin{equation}
	\begin{aligned}
		\frac{\mathrm d}{\mathrm dt}
		\int_\Omega
		\mathcal W_{\delta,de}(\mathbf u,\psi)\,\mathrm dv
		&=
		\int_\Omega
		\boldsymbol\sigma_\delta^{\mathrm d}:
		\dot{\boldsymbol\varepsilon}_\delta
		\,\mathrm dv
		+
		\int_\Omega
		g'(\psi)\mathcal W_{\delta,e}^{+}(\mathbf u)\dot\psi
		\,\mathrm dv  \\
		&=
		\int_\Omega
		\boldsymbol\sigma_\delta^{\mathrm d}:
		\dot{\boldsymbol\varepsilon}_\delta
		\,\mathrm dv
		-
		\int_\Omega
		2(1-\psi)\mathcal W_{\delta,e}^{+}(\mathbf u)\dot\psi
		\,\mathrm dv .
	\end{aligned}
	\label{eq:strain_energy_chain_rule_for_energy_thm_app}
\end{equation}
For the nonlocal crack-surface functional, the self-adjointness of
$\mathcal L_\delta$ gives
\begin{equation}
	\frac{\mathrm d}{\mathrm dt}
	\mathcal A_\delta(\psi,\mathcal L_\delta\psi)
	=
	\int_\Omega
	\frac{1}{\delta}
	\left(
	\psi+\delta^2\mathcal L_\delta\psi
	\right)
	\dot\psi
	\,\mathrm dv .
	\label{eq:crack_surface_derivative_for_energy_thm_app}
\end{equation}
Indeed, from
\begin{equation}
	\mathcal A_\delta(\psi,\mathcal L_\delta\psi)
	=
	\int_\Omega
	\frac{1}{2\delta}\psi^2\,\mathrm dv
	+
	\frac{\delta}{4}
	\int_\Omega\int_\Omega
	\omega_\delta(|\mathbf y-\mathbf x|)
	\big(\psi(\mathbf x)-\psi(\mathbf y)\big)^2
	\,\mathrm dv_{\mathbf x}\mathrm dv_{\mathbf y},
\end{equation}
one obtains
\begin{equation}
	\frac{\mathrm d}{\mathrm dt}
	\mathcal A_\delta(\psi,\mathcal L_\delta\psi)
	=
	\int_\Omega
	\frac{1}{\delta}\psi\dot\psi\,\mathrm dv
	+
	\delta
	\int_\Omega
	\mathcal L_\delta\psi\,\dot\psi
	\,\mathrm dv ,
\end{equation}
which is exactly \eqref{eq:crack_surface_derivative_for_energy_thm_app}.

Now multiply the phase-field equation
\begin{equation}
	\bar{\eta}\dot\psi
	=
	\mathcal Y(\psi,D)
	-
	\left(
	\psi+\delta^2\mathcal L_\delta\psi
	\right)
\end{equation}
by $\frac{G_c}{\delta}\dot\psi$ and integrate over $\Omega$.
Using
\begin{equation}
	\mathcal Y(\psi,D)
	=
	(1-\psi)D,
	\qquad
	D=
	\frac{2\mathcal W_{\delta,e}^{+}(\mathbf u)}{G_c/\delta},
\end{equation}
we obtain
\begin{equation}
	\frac{\bar{\eta}G_c}{\delta}
	\int_\Omega |\dot\psi|^2\,\mathrm dv
	=
	\int_\Omega
	2(1-\psi)\mathcal W_{\delta,e}^{+}(\mathbf u)\dot\psi
	\,\mathrm dv
	-
	G_c
	\frac{\mathrm d}{\mathrm dt}
	\mathcal A_\delta(\psi,\mathcal L_\delta\psi).
	\label{eq:phase_balance_for_energy_thm_app}
\end{equation}

Substituting \eqref{eq:phase_balance_for_energy_thm_app} into
\eqref{eq:strain_energy_chain_rule_for_energy_thm_app} yields
\begin{equation}
	\begin{aligned}
		\frac{\mathrm d}{\mathrm dt}
		\int_\Omega
		\mathcal W_{\delta,de}(\mathbf u,\psi)\,\mathrm dv
		&=
		\int_\Omega
		\boldsymbol\sigma_\delta^{\mathrm d}:
		\dot{\boldsymbol\varepsilon}_\delta
		\,\mathrm dv
		-
		\frac{\bar{\eta}G_c}{\delta}
		\int_\Omega |\dot\psi|^2\,\mathrm dv  \\
		&\quad
		-
		G_c
		\frac{\mathrm d}{\mathrm dt}
		\mathcal A_\delta(\psi,\mathcal L_\delta\psi).
	\end{aligned}
\end{equation}
Rearranging gives
\begin{equation}\label{eq:strain_fracture_balance_for_energy_thm_app}
	\begin{aligned}
		&
		\frac{\mathrm d}{\mathrm dt}
		\int_\Omega
		\mathcal W_{\delta,de}(\mathbf u,\psi)\,\mathrm dv
		+
		G_c
		\frac{\mathrm d}{\mathrm dt}
		\mathcal A_\delta(\psi,\mathcal L_\delta\psi)
		\\
		&\qquad
		=
		\int_\Omega
		\boldsymbol\sigma_\delta^{\mathrm d}:
		\dot{\boldsymbol\varepsilon}_\delta
		\,\mathrm dv
		-
		\frac{\bar{\eta}G_c}{\delta}
		\int_\Omega |\dot\psi|^2\,\mathrm dv .
	\end{aligned}
\end{equation}

Finally, adding \eqref{eq:kinetic_balance_for_energy_thm_app} and
\eqref{eq:strain_fracture_balance_for_energy_thm_app}, and subtracting
\eqref{eq:external_loading_potential_derivative_app}, we obtain
\begin{equation}
	\begin{aligned}
		\frac{\mathrm d}{\mathrm dt}
		\left[
		\int_{\Omega\setminus\Gamma_\delta}
		\frac{\rho}{2}|\dot{\mathbf u}|^2\,\mathrm dv
		+
		\int_\Omega
		\mathcal W_{\delta,de}(\mathbf u,\psi)\,\mathrm dv
		+
		G_c\mathcal A_\delta(\psi,\mathcal L_\delta\psi)
		-
		\ell_u(\mathbf u)
		\right]
		=
		-
		\frac{\bar{\eta}G_c}{\delta}
		\int_\Omega |\dot\psi|^2\,\mathrm dv .
	\end{aligned}
\end{equation}
That is,
\begin{equation}
	\frac{\mathrm d}{\mathrm dt}\mathcal H(t)
	=
	-\eta
	\int_\Omega |\dot\psi|^2\,\mathrm dv
	\le 0,
	\qquad
	\eta:=\frac{\bar{\eta}G_c}{\delta}.
\end{equation}
This proves the auxiliary identity used in the proof of
Theorem~\ref{thm:Hamiltonian_balance}.

%% If you have bibdatabase file and want bibtex to generate the
%% bibitems, please use
%%
 \bibliographystyle{elsarticle-num} 
 \bibliography{cas-refs}

@article{gunzburger2010nonlocal,
  title={A nonlocal vector calculus with application to nonlocal boundary value problems},
  author={Gunzburger, Max and Lehoucq, Richard B},
  journal={Multiscale Modeling \& Simulation},
  volume={8},
  number={5},
  pages={1581--1598},
  year={2010},
  publisher={SIAM}
}

@article{du2013nonlocal,
  title={A nonlocal vector calculus, nonlocal volume-constrained problems, and nonlocal balance laws},
  author={Du, Qiang and Gunzburger, Max and Lehoucq, Richard B and Zhou, Kun},
  journal={Mathematical Models and Methods in Applied Sciences},
  volume={23},
  number={03},
  pages={493--540},
  year={2013},
  publisher={World Scientific}
}

@article{d2021cookbook,
  title={A cookbook for approximating Euclidean balls and for quadrature rules in finite element methods for nonlocal problems},
  author={D’Elia, Marta and Gunzburger, Max and Vollmann, Christian},
  journal={Mathematical Models and Methods in Applied Sciences},
  volume={31},
  number={08},
  pages={1505--1567},
  year={2021},
  publisher={World Scientific}
}

@article{silling2000reformulation,
  title={Reformulation of elasticity theory for discontinuities and long-range forces},
  author={Silling, Stewart A},
  journal={Journal of the Mechanics and Physics of Solids},
  volume={48},
  number={1},
  pages={175--209},
  year={2000},
  publisher={Elsevier}
}

@article{silling2007peridynamic,
  title={Peridynamic states and constitutive modeling},
  author={Silling, Stewart A and Epton, M and Weckner, Olaf and Xu, Jifeng and Askari, E23481501120},
  journal={Journal of elasticity},
  volume={88},
  pages={151--184},
  year={2007},
  publisher={Springer}
}

@article{zhou2010mathematical,
	title={Mathematical and numerical analysis of linear peridynamic models with nonlocal boundary conditions},
	author={Zhou, Kun and Du, Qiang},
	journal={SIAM Journal on Numerical Analysis},
	volume={48},
	number={5},
	pages={1759--1780},
	year={2010},
	publisher={SIAM}
}

@article{Bourdin2000,
	author  = {Bourdin, Blaise and Francfort, Gilles\,A. and Marigo, Jean‑Jacques},
	title   = {Numerical Experiments in Revisited Brittle Fracture},
	journal = {Journal of the Mechanics and Physics of Solids},
	volume  = {48},
	number  = {4},
	pages   = {797--826},
	year    = {2000},
}

@incollection{Wu2020,
	author    = {Wu, Jian‑Ying and Nguyen, Vinh Phu and Nguyen, Chi Thanh and Sutula, Danas and Sinaie, Sina and Bordas, Stéphane P. A.},
	title     = {Phase‑field modeling of fracture},
	booktitle = {Advances in Applied Mechanics},
	editor    = {Bordas, Stéphane P. A. and Balint, Daniel},
	series    = {Advances in Applied Mechanics},
	volume    = {53},
	pages     = {1--183},
	year      = {2020},
	publisher = {Elsevier},
}

@article{Miehe2015a,
	author  = {Miehe, Christian and Sch{\"a}nzel, Lars‑Martin and Ulmer, Henk},
	title   = {Phase‑field modeling of fracture in multi‑physics problems. Part-I. Balance of crack surface and failure criteria for brittle crack propagation in thermo‑elastic solids},
	journal = {Computer Methods in Applied Mechanics and Engineering},
	volume  = {294},
	pages   = {449--485},
	year    = {2015},
}

@article{Miehe2010,
	author  = {Miehe, Christian and Hofacker, Martin and Welschinger, Frank},
	title   = {A phase field model for rate‑independent crack propagation: robust algorithmic implementation based on operator splits},
	journal = {Computer Methods in Applied Mechanics and Engineering},
	volume  = {199},
	number  = {45–48},
	pages   = {2765--2778},
	year    = {2010},
}

@article{song2008comparative,
  title={A comparative study on finite element methods for dynamic fracture},
  author={Song, Jeong-Hoon and Wang, Hongwu and Belytschko, Ted},
  journal={Computational Mechanics},
  volume={42},
  number={2},
  pages={239--250},
  year={2008},
  publisher={Springer}
}

@article{xu1994numerical,
  title={Numerical simulations of fast crack growth in brittle solids},
  author={Xu, X-P and Needleman, Alan},
  journal={Journal of the Mechanics and Physics of Solids},
  volume={42},
  number={9},
  pages={1397--1434},
  year={1994},
  publisher={Elsevier}
}

@article{belytschko2003dynamic,
  title={Dynamic crack propagation based on loss of hyperbolicity and a new discontinuous enrichment},
  author={Belytschko, Ted and Chen, Hao and Xu, Jingxiao and Zi, Goangseup},
  journal={International journal for numerical methods in engineering},
  volume={58},
  number={12},
  pages={1873--1905},
  year={2003},
  publisher={Wiley Online Library}
}

@article{kalthoff2000modes,
  title={Modes of dynamic shear failure in solids},
  author={Kalthoff, Joerg F},
  journal={International Journal of fracture},
  volume={101},
  number={1},
  pages={1--31},
  year={2000},
  publisher={Springer}
}

@book{belytschko2014nonlinear,
  title={Nonlinear finite elements for continua and structures},
  author={Belytschko, Ted and Liu, Wing Kam and Moran, Brian and Elkhodary, Khalil},
  year={2014},
  publisher={John wiley \& sons}
}

@book{lemaitre2012course,
  title={A course on damage mechanics},
  author={Lemaitre, Jean},
  year={2012},
  publisher={Springer science \& business media}
}

@article{wells2001new,
  title={A new method for modelling cohesive cracks using finite elements},
  author={Wells, Garth N and Sluys, LJ1013},
  journal={International Journal for numerical methods in engineering},
  volume={50},
  number={12},
  pages={2667--2682},
  year={2001},
  publisher={Wiley Online Library}
}

@article{moes1999finite,
  title={A finite element method for crack growth without remeshing},
  author={Mo{\"e}s, Nicolas and Dolbow, John and Belytschko, Ted},
  journal={International journal for numerical methods in engineering},
  volume={46},
  number={1},
  pages={131--150},
  year={1999},
  publisher={Wiley Online Library}
}

@article{daux2000arbitrary,
  title={Arbitrary branched and intersecting cracks with the extended finite element method},
  author={Daux, Christophe and Mo{\"e}s, Nicolas and Dolbow, John and Sukumar, Natarajan and Belytschko, Ted},
  journal={International journal for numerical methods in engineering},
  volume={48},
  number={12},
  pages={1741--1760},
  year={2000},
  publisher={Wiley Online Library}
}

@article{rabczuk2004cracking,
  title={Cracking particles: a simplified meshfree method for arbitrary evolving cracks},
  author={Rabczuk, Timon and Belytschko, Ted},
  journal={International journal for numerical methods in engineering},
  volume={61},
  number={13},
  pages={2316--2343},
  year={2004},
  publisher={Wiley Online Library}
}

@article{rabczuk2007three,
  title={A three-dimensional large deformation meshfree method for arbitrary evolving cracks},
  author={Rabczuk, Timon and Belytschko, T23253911128},
  journal={Computer methods in applied mechanics and engineering},
  volume={196},
  number={29-30},
  pages={2777--2799},
  year={2007},
  publisher={Elsevier}
}

@article{geers1998strain,
  title={Strain-based transient-gradient damage model for failure analyses},
  author={Geers, MGD and De Borst, R and Brekelmans, WAM and Peerlings, RHJ0938},
  journal={Computer methods in applied mechanics and engineering},
  volume={160},
  number={1-2},
  pages={133--153},
  year={1998},
  publisher={Elsevier}
}

@article{peerlings2002localisation,
  title={Localisation issues in local and nonlocal continuum approaches to fracture},
  author={Peerlings, RHJ d and De Borst, R and Brekelmans, WAM and Geers, MGD1903091},
  journal={European Journal of Mechanics-A/Solids},
  volume={21},
  number={2},
  pages={175--189},
  year={2002},
  publisher={Elsevier}
}

@article{bavzant2002nonlocal,
  title={Nonlocal integral formulations of plasticity and damage: survey of progress},
  author={Ba{\v{z}}ant, Zden{\v{e}}k P and Jir{\'a}sek, Milan},
  journal={Journal of engineering mechanics},
  volume={128},
  number={11},
  pages={1119--1149},
  year={2002},
  publisher={American Society of Civil Engineers}
}

@article{foulk2010examination,
  title={An examination of stability in cohesive zone modeling},
  author={Foulk III, JW},
  journal={Computer methods in applied mechanics and engineering},
  volume={199},
  number={9-12},
  pages={465--470},
  year={2010},
  publisher={Elsevier}
}

@article{nguyen2014open,
  title={An open source program to generate zero-thickness cohesive interface elements},
  author={Nguyen, Vinh Phu},
  journal={Advances in Engineering Software},
  volume={74},
  pages={27--39},
  year={2014},
  publisher={Elsevier}
}

@article{wu2019computational,
  title={Computational modeling of localized failure in solids: XFEM vs PF-CZM},
  author={Wu, Jian-Ying and Qiu, Jie-Feng and Nguyen, Vinh Phu and Mandal, Tushar Kanti and Zhuang, Luo-Jia},
  journal={Computer Methods in Applied Mechanics and Engineering},
  volume={345},
  pages={618--643},
  year={2019},
  publisher={Elsevier}
}

@article{bourdin2008variational,
  title={The variational approach to fracture},
  author={Bourdin, Blaise and Francfort, Gilles A and Marigo, Jean-Jacques},
  journal={Journal of elasticity},
  volume={91},
  number={1},
  pages={5--148},
  year={2008},
  publisher={Springer}
}

@article{miehe2010phase,
  title={A phase field model for rate-independent crack propagation: Robust algorithmic implementation based on operator splits},
  author={Miehe, Christian and Hofacker, Martina and Welschinger, Fabian},
  journal={Computer Methods in Applied Mechanics and Engineering},
  volume={199},
  number={45-48},
  pages={2765--2778},
  year={2010},
  publisher={Elsevier}
}

@article{bourdin2000numerical,
  title={Numerical experiments in revisited brittle fracture},
  author={Bourdin, Blaise and Francfort, Gilles A and Marigo, Jean-Jacques},
  journal={Journal of the Mechanics and Physics of Solids},
  volume={48},
  number={4},
  pages={797--826},
  year={2000},
  publisher={Elsevier}
}

@article{budarapu2019multiscale,
  title={Multiscale modeling of material failure: Theory and computational methods},
  author={Budarapu, Pattabhi Ramaiah and Zhuang, Xiaoying and Rabczuk, Timon and Bordas, Stephane PA},
  journal={Advances in applied mechanics},
  volume={52},
  pages={1--103},
  year={2019},
  publisher={Elsevier}
}

@article{borden2012phase,
  title={A phase-field description of dynamic brittle fracture},
  author={Borden, Michael J and Verhoosel, Clemens V and Scott, Michael A and Hughes, Thomas JR and Landis, Chad M},
  journal={Computer Methods in Applied Mechanics and Engineering},
  volume={217},
  pages={77--95},
  year={2012},
  publisher={Elsevier}
}

@article{du2013analysis,
	title={Analysis of the volume-constrained peridynamic Navier equation of linear elasticity},
	author={Du, Qiang and Gunzburger, Max and Lehoucq, RB and Zhou, Kun},
	journal={Journal of Elasticity},
	volume={113},
	number={2},
	pages={193--217},
	year={2013},
	publisher={Springer}
}

@article{wu2020phase,
	title={Phase-field modeling of fracture},
	author={Wu, Jian-Ying and Nguyen, Vinh Phu and Nguyen, Chi Thanh and Sutula, Danas and Sinaie, Sina and Bordas, St{\'e}phane PA},
	journal={Advances in applied mechanics},
	volume={53},
	pages={1--183},
	year={2020},
	publisher={Elsevier}
}

@article{sukumar2000extended,
	title={Extended finite element method for three-dimensional crack modelling},
	author={Sukumar, Natarajan and Mo{\"e}s, Nicolas and Moran, Brian and Belytschko, Ted},
	journal={International journal for numerical methods in engineering},
	volume={48},
	number={11},
	pages={1549--1570},
	year={2000},
	publisher={Wiley Online Library}
}

@article{song2009cracking,
	title={Cracking node method for dynamic fracture with finite elements},
	author={Song, Jeong-Hoon and Belytschko, Ted},
	journal={International Journal for Numerical Methods in Engineering},
	volume={77},
	number={3},
	pages={360--385},
	year={2009},
	publisher={Wiley Online Library}
}

@article{ren2025dual,
	title={Dual-horizon peridynamics-based variational damage modeling for complex dynamic fractures},
	author={Ren, Huilong and Zhuang, Xiaoying and Bie, Yehui and Rabczuk, Timon and Zhu, Hehua},
	journal={Theoretical and Applied Fracture Mechanics},
	volume={138},
	pages={104974},
	year={2025},
	publisher={Elsevier}
}

%% else use the following coding to input the bibitems directly in the
%% TeX file.

% \begin{thebibliography}{00}

% %% \bibitem{label}
% %% Text of bibliographic item

% \bibitem{}

% \end{thebibliography}
\end{document}